\newtheorem{theorem}{Theorem}[section]
\newtheorem{proposition}[theorem]{Proposition}
\newtheorem{lemma}[theorem]{Lemma}
\newtheorem{corollary}[theorem]{Corollary}
{\theorembodyfont{\rm} \newtheorem{assumption}[theorem]{Assumption} }
{\theorembodyfont{\rm} \newtheorem{remark}[theorem]{Remark} }
{\theorembodyfont{\rm} \newtheorem{example}[theorem]{Example} }
\newcommand{\proof}[1][]{{{\sc Proof}\xspace#1. }}
\newcommand{\refeq}[1]{\klasm{\ref{eq:#1}}}
\newcommand{\reza}{ \mathbb{R}\hspace{0.1mm}}
\newcommand{\naza}{ \mathbb{N} }
\newcommand{\varx}{x}
\newcommand{\varxjb}[1][j]{x_{#1-1/2}}
\newcommand{\varxnb}[1][n]{x_{#1-1/2}}
\newcommand{\vary}{y}
\newcommand{\klasm}[1]{(#1)}
\newcommand{\kla}[1]{(#1)}
\newcommand{\klafn}[1]{(#1)}
\newcommand{\noklafn}[1]{#1}
\newcommand{\klami}[1]{(#1)}
\newcommand{\ints}[4]{\int_{#1}^{#2}#3 \, #4}
\newcommand{\cdott}{\hspace{0.4mm}}
\newcommand{\cdottsm}{\hspace{0.3mm}}
\newcommand{\for}{\quad \text{for} \ \ }
\newcommand{\forsm}{\ \text{for} \ }
\newcommand{\intervalarg}[3]{#2 \hspace{0mm} \le \hspace{0mm} #1 \hspace{0mm} \le \hspace{0mm} #3}
\newcommand{\intervalargno}[3]{#2 \le #1 \le #3}
\newcommand{\xmax}{a}
\newcommand{\interval}[2]{\eckkla{#1, #2 }}
\newcommand{\eq}{\ = \ }
\newcommand{\mywlog}{without loss of generality\xspace}
\newcommand{\Ialp}[3][\myalpha]{\klasm{\myi^{{#1}} #2}\klasm{#3}}
\newcommand{\Ialpkla}[3][\myalpha]{\klasm{\myi^{{#1}} \kla{#2}}\klasm{#3}}
\newcommand{\Ialph}[3][\myalpha]{\klasm{\Omega^{{#1}}_h #2}\klasm{#3}}
\newcommand{\mfrac}[2]{\dfrac{\mbox{\footnotesize \raisebox{-0.5mm}{$#1$}}}%
{\mbox{\footnotesize \raisebox{0.8mm}{$#2$}}}}
\newenvironment{mylist}{%
\begin{list}{\tinybullet}
{\setlength{\topsep}{0.2cm}
\setlength{\itemsep}{0mm}
\setlength{\labelwidth}{0mm}
\setlength{\labelsep}{3mm}
\setlength{\itemindent}{3mm}
\setlength{\leftmargin}{0mm}
}}{\end{list}}
\newenvironment{mylist_indent}{%
\begin{list}{\tinybullet}
{\setlength{\topsep}{0.2cm}
\setlength{\itemsep}{0mm}
\setlength{\labelwidth}{2mm}
\setlength{\labelsep}{3mm}
\setlength{\itemindent}{0mm}
\setlength{\leftmargin}{5mm}
}}{\end{list}}
\newcommand{\myi}{\mathcal{V}}
\newcommand{\myalpha}{\alpha}
\newcommand{\alp}{\alpha}
\newcommand{\alpone}{\alpha+1}
\newcommand{\alptwo}{\alpha+2}
\newcommand{\gamalp}{\gamma + \alp}
\newcommand{\malp}{-\alpha}
\newcommand{\malpone}{-\alpha-1}
\newcommand{\malptwo}{-\alpha-2}
\newcommand{\alpmfour}{\alpha-4}
\newcommand{\mon}[1]{y^{#1}}
\newcommand{\halp}{h^\alpha}
\newcommand{\hmalp}{h^{-\alpha}}
\newcommand{\halpone}{h^{\alpone}}
\newcommand{\mysum}[2]{\mathop{\mbox{\small $\dis
\sum\nolimits$}}_{#1}^{#2}}
\newcommand{\myomega}{\omega}
\newcommand{\myomegan}[1][n]{\myomega_{#1}}
\newcommand{\tinybullet}{{\tiny \raisebox{0.6mm}{$ \bullet $}}}
\newcommand{\with}{\quadsm \text{with} \ \ }
\newcommand{\stepsize}{step size\xspace}
\newcommand{\mys}{n}
\newcommand{\myseqq}[3]{#1, \allowbreak #2, \allowbreak \ldots, #3}
\newcommand{\dis}{\displaystyle}
\newcommand{\bn}{\bigskip \noindent}
\newcommand{\ie}{i.\,e.,\xspace}
\newcommand{\nmax}{N}
\newcommand{\bs}{-}
\newcommand{\stepsizes}{\stepsize{}s\xspace}
\newcommand{\eg}{e.g.,\xspace}
\newcommand{\minus}{-}
\newcommand{\wrt}{with respect to\xspace}
\newcommand{\Landau}{\mathcal{O}}
\newcommand{\Landauno}[1]{\Landau\kla{#1}}
\newcommand{\as}{\quad \text{as} \ \ }
\newcommand{\assh}{\ \  \text{as} \ }
\newcommand{\cf}{cf.\mbox{}\xspace}
\newcommand{\ableit}[2]{#1^{\klafn{#2}}}
\newcommand{\myunderbrace}[2]{\underbrace{\raisebox{-0.8mm}{\vphantom{$ #1 $}} #1 }_{\dis #2}}
\newcommand{\ldotsdot}{\ldots \ .}
\newcommand{\lfrac}[2]{#1/ #2}
\newcommand{\myxi}{\xi}
\newcommand{\genfunc}{generating function\xspace}
\newcommand{\insetno}[1]{\{ \, #1 \, \}}
\newcommand{\koza}{ \mathbb{C} }
\newcommand{\modul}[1]{\vert \hspace{0.3mm} #1 \hspace{0.3mm} \vert}
\newcommand{\modulbi}[1]{\big\vert \hspace{0.3mm} #1 \hspace{0.3mm} \big\vert}
\newcommand{\plus}{+}
\newcommand{\maxnorm}[1]{\norm{#1}_\infty }
\newcommand{\norm}[1]{\Vert \hspace{0mm} #1 \hspace{0mm} \Vert}
\newcommand{\Defeq}{\ := \ }
\newcommand{\prim}[1]{#1^{\prime}}
\newcommand{\gridpoint}{grid point\xspace}
\newcommand{\gridpoints}{\gridpoint{}s\xspace}
\newcommand{\mymetmod}[3]{\kla{\widetilde{\Omega}_{#1} \cdottsm #2 } \cdott \klasm{#3}}
\newcommand{\myerrmod}[3]{\kla{\widetilde{E}^\alp_{#1} \cdottsm #2 } \cdott \klasm{#3}}
\newcommand{\myoverbrace}[2]{\overbrace{\raisebox{0.8mm}{\vphantom{$ #1 $}} #1}^{\dis #2}}
\newcommand{\mymetno}[3]{\kla{\Omega_{#1} \cdottsm #2 } \cdott \kla{#3}}
\newcommand{\resp}{respectively\xspace}
\newcommand{\modquamet}{modified quadrature method\xspace}
\newcommand{\feldstretch}[1]{\renewcommand{\arraystretch}{#1}}
\newcommand{\myrnn}[1][\n]{\reza^{#1\times#1}}
\newcommand{\Landausm}[1]{\Landau\klasm{#1}}
\newcommand{\defeq}{:=}
\newenvironment{myenumerate}{%
\begin{list}{(\alph{enumcount})}
{\setcounter{enumcount}{1}\usecounter{enumcount}
\setlength{\topsep}{1mm}
\setlength{\itemsep}{0mm}
\setlength{\labelwidth}{0mm}
\setlength{\labelsep}{1mm}
\setlength{\itemindent}{1mm}
\setlength{\leftmargin}{0mm}
}}{\end{list}}
\newcommand{\klasmsh}[1]{\mbox{\scriptsize\raisebox{0.2mm}{$($}}\nolinebreak\hspace{-0.0mm}\raisebox{-0.1mm}{$#1$}\hspace{-1mm}\nolinebreak\mbox{\scriptsize\raisebox{0.2mm}{$)$}}}
\newcommand{\proofendspruch}[1][]{This completes the proof#1.\proofend}
\newcommand{\proofend}{\endproof}
\def\endproof{$\qquad \Box$}
\newcommand{\rhs}{right\bs{}hand side\xspace}
\newcommand{\xn}[1][n]{x_{#1}}
\newcommand{\xj}[1][\jod]{x_{#1}}
\newcommand{\xjm}[1][\jod]{x_{#1-1}}
\newcommand{\xjb}[1][\jod]{x_{#1-1/2}}
\newcommand{\xjf}[1][\jod]{x_{#1+1/2}}
\newcommand{\myassump}[2]{\begin{assumption} #1 \label{th:#2} \end{assumption}}
\newcommand{\aninv}[1][n]{\omega^{(-1)}_{#1}}
\newcommand{\an}[1][n]{\omega_{#1}}
\newcommand{\rinv}[1]{r^{(-1)}_{#1}}
\newcommand{\schweifklala}[1]{\Big\{\cdott #1 \cdott \Big\} }
\newcommand{\schweifklabi}[1]{\big\{\cdott #1 \cdott \big\} }
\newcommand{\schweifkla}[1]{\{\cdott #1 \cdott \} }
\newcommand{\matvecform}{matrix\bs{}vector formulation\xspace}
\newenvironment{myenumerate_indent}{%
\begin{list}{(\alph{enumcount})}
{\setcounter{enumcount}{1}\usecounter{enumcount}
\setlength{\topsep}{1mm}
\setlength{\itemsep}{-0mm}
\setlength{\labelwidth}{5mm}
\setlength{\labelsep}{2mm}
\setlength{\itemindent}{-0mm}
\setlength{\leftmargin}{7mm}
}}{\end{list}}
\newcommand{\quadti}{\hspace{1.5mm}}
\newcommand{\Ah}{A_h}
\newcommand{\Fh}{F_h^\delta}
\newcommand{\Vh}[1][s]{V_h}
\newcommand{\Dh}{D_h}
\newcommand{\Sh}{S_h}
\newcommand{\myl}{\n}
\newcommand{\gammaalpinv}{\mfrac{1}{\Gamma(\alpha)}}
\newcommand{\gammaalpinvone}{\mfrac{1}{\Gamma(\alpha+1)}}
\newcommand{\gammaalp}{\Gamma(\alpha)}
\newcommand{\gammaalpone}{\Gamma(\alpha+1)}
\newcommand{\gammaalptwo}{\Gamma(\alpha+2)}
\newcommand{\klabi}[1]{\big(\cdottsm #1 \cdottsm\big)}
\newcommand{\quadsm}{\hspace{3mm}}
\newcommand{\gge}{\ > \ }
\newcommand{\omegbar}[2]{\overline{\omega}_{#1#2}}
\newcommand{\klala}[1]{\Big(\cdottsm #1 \cdottsm\Big)}
\newcommand{\n}{n}
\newcommand{\fndelta}[1][n]{f_{#1}^\delta}
\newcommand{\enndelta}[1][j-1/2]{e_{#1}^\delta}
\newcommand{\enndeltamod}[1][j-1/2]{\tilde{e}_{#1}^\delta}
\newcommand{\undelta}[1][n]{u_{#1}^\delta}
\newcommand{\undeltamod}[1][n]{\widetilde{u}_{#1}^\delta}
\newcommand{\undeltab}[1][\mj]{u_{#1-1/2}^\delta}
\newcommand{\undeltabmod}[1][\mj]{\widetilde{u}_{#1-1/2}^\delta}
\newcommand{\genabelinteqspur}{\weaklysingular Volterra integral equations\xspace}
\newcommand{\Landaubi}[1]{\Landau\klabi{#1}}
\newcommand{\Ehdelta}[1][n]{\Delta_h^\delta}
\newcommand{\Fhdelta}[1][n]{F_h^\delta}
\newcommand{\fortwo}{\quad  \text{for} \ \ }
\newcommand{\rhss}{\rhs{}s\xspace}
\newcommand{\mycitea}[2]{#1~\cite{#1[#2]}}
\newcommand{\mynocitea}[2]{\cite{#1[#2]}}
\newcommand{\myciteatwo}[2]{#1~\cite{#2}}
\newcommand{\myciteb}[3]{#1 and #2~\cite{#1_#2[#3]}}
\newcommand{\mycitebtwo}[3]{#1\cdott /\cdott #2~\cite{#3}}
\newcommand{\myk}[2]{k_{#1,#2}}
\newcommand{\fxn}[1][n]{f\klasm{x_{#1}}}
\newcommand{\fps}{power series\xspace}
\newcommand{\kaa}{\ell}
\newcommand{\mysumtxt}[2]{\sum_{#1}^{#2}}
\newcommand{\eckkla}[1]{[\cdott #1 \cdott ]}
\newcommand{\powser}{power series\xspace}
\newcommand{\myfun}{\varphi}
\newcommand{\myfuntil}{\widetilde{\varphi}}
\newcommand{\myfunn}[1][\n]{\myfun_{#1}}
\newcommand{\myfunntil}[1][\n]{\myfuntil_{#1}}
\newcommand{\idstar}{\stackrel{(*)}{ = }}
\newcommand{\cont}{continuous\xspace}
\newcommand{\mydelta}{\chi}
\newcommand{\hdeltatonull}{\kla{h, \delta} \to 0}
\newcommand{\mydeltax}{h}
\newcommand{\mydeltaxalp}{h^\alpha}
\newcommand{\jod}{j}
\newcommand{\N}{N}
\newcommand{\enn}[2]{\klasm{E^{{\myalpha}}_h #1}\klasm{#2}}
\newcommand{\ennmod}[2]{\klasm{\widetilde{E}^{{\myalpha}}_h #1}\klasm{#2}}
\newcommand{\ennsym}{E^{\myalpha}_h}
\newcommand{\rnh}[1][\n]{r_{#1}}
\newcommand{\rh}{R_{h}}
\newcommand{\rha}{S_{h}}
\newcommand{\rhb}{T_{h}}
\newcommand{\rhna}[1][\n]{s_{#1}}
\newcommand{\rhnb}[1][\n]{t_{#1}}
\newcommand{\en}[1][\n]{e_{\n}}
\newcommand{\remarkend}{\quad $ \vartriangle $}
\newcommand{\inset}[1]{\{ \, #1 \, \}}
\newcommand{\intervalargo}[3]{#2 < #1 < #3}
\newcommand{\myt}{\xi}
\newcommand{\infseqzerind}[1]{$ #1_0, #1_1, \ldots $}
\newcommand{\Fps}{Formal power series\xspace}
\newcommand{\infseqind}[1]{$ #1_1, #1_2, \ldots $}
\newcommand{\xiunitdisk}{\myxi \in \koza, \ \modul{\myxi} < 1}
\newcommand{\inverse}{inverse\xspace}
\newcommand{\powinv}[1]{[#1]^{-1}}
\newcommand{\myast}{\cdot}
\newcommand{\octave}{O\textsc{ctave}\xspace}
\newcommand{\C}{F}
\newcommand{\HL}[2]{\C_L^{#1}\interval{0}{#2}}
\newcommand{\HLc}[3]{\C_L^{#1}\interval{#2}{#3}}
\newcommand{\Hsp}[2]{\C^{#1}\interval{0}{#2}}
\newcommand{\Hspc}[3]{\C^{#1}\interval{#2}{#3}}
\newcommand{\myb}{b}
\newcommand{\normlinfomeg}[1]{\Vert #1 \Vert_{\infty,\mytau}}
\newcommand{\normlone}[1]{\Vert #1 \Vert_{1}}
\newcommand{\linfomeg}{\ell_{\mytau}^{\infty}}
\newcommand{\linfomegtil}{\ell_{\wtilde}^{\infty}}
\newcommand{\czeromeg}{c_{\mytau}^{0}}
\newcommand{\czeromegtilde}{c_{\tildeomega}^{0}}
\newcommand{\lone}{\ell^{1}}
\newcommand{\Dr}{\mathcal{D}}
\newcommand{\ainvn}[1][n]{a_{#1}^{(-1)}}
\newcommand{\wtilden}[1][n]{\widetilde{\mytau}_{#1}}
\newcommand{\tildeomega}{\widetilde{\mytau}}
\newcommand{\wtilde}{\widetilde{\mytau}}
\newcommand{\Nbf}{\mathbf{N}}
\newcommand{\mytau}{\sigma}
\newcommand{\mybeta}{\beta}
\newcommand{\taun}{\tau_n}
\newcommand{\tn}[1][n]{\kappa_{#1}}
\newcommand{\myj}{j}
\newcommand{\mj}{n}
\renewcommand{\max}{\mathop{\textup{max}}}
\renewcommand{\sup}{\mathop{\textup{sup}}}
\newcommand{\repmidrule}{product midpoint rule\xspace}
\newcommand{\modrepmidrule}{modified \repmidrule}
\newcommand{\ph}{p_h}
\newcommand{\qh}{q_h}
\newcommand{\myq}{q}
\newcommand{\calp}{c_\alp}
\newcommand{\nth}[1][n]{$#1$\hspace{0.15mm}th\xspace}
\newcommand{\mytp}{p}
\newcommand{\w}[2]{w_{#1#2}}
\newcommand{\weaklysingular}{Abel-type\xspace}
\newenvironment{myenumerate_roman}{%
\begin{list}{(\roman{enumcountroman})}
{\setcounter{enumcountroman}{1}\usecounter{enumcountroman}
\setlength{\topsep}{0.2cm}
\setlength{\itemsep}{0mm}
\setlength{\labelwidth}{0mm}
\setlength{\labelsep}{3mm}
\setlength{\itemindent}{3mm}
\setlength{\leftmargin}{0mm}
}}{\end{list}}
\newcommand{\myfunjb}[1][\varxjb]{\myfun\klasm{#1}}
\newcommand{\myfunpjb}{\prim{\myfun}\klasm{\varxjb}}
\newcommand{\Inthesequel}{In what follows,\  }
\title{The \repmidrule for \weaklysingular Volterra integral equations of the first kind with perturbed data
\date{} }
\newcounter{enumcountroman}
\author{Robert Plato%
\thanks{Department of Mathematics, University of Siegen,
Walter-Flex-Str.~3, 57068 Siegen, Germany.}
}
\numberwithin{equation}{section}
\begin{document}

\maketitle
\newcounter{enumcount}
\renewcommand{\theenumcount}{(\alph{enumcount})}

\bibliographystyle{plain}

\begin{abstract}
In the present paper we consider the regularizing properties
of the \repmidrule for the stable solution of \weaklysingular Volterra integral 
equations of the first kind with perturbed \rhss.
The H\"older continuity of the solution and its derivative is carefully taken into account,
and correction weights are considered to get rid of initial conditions. 
The proof of the inverse stability of the quadrature weights
relies on Banach algebra techniques.
Finally, numerical results are presented.
\end{abstract}
{\small
\textbf{Key words.} 
Weakly singular Volterra integral equation of the first kind;
Abel integral operator;
quadrature method;
product integration;
midpoint rule;
Wiener's theorem;
Banach algebra;
inverse-closed;
noisy data;
parameter choice strategy.
}

\section{Introduction}
\label{intro-midpointrule}
\subsection{Preliminary remarks}
In this paper we consider linear \genabelinteqspur of the following form,
\begin{eqnarray}
\klasm{Au}\klasm{\varx}  =
\mfrac{1}{\Gamma\klami{\alpha}}
\ints{0}{\varx}
{(\varx-\vary)^{\alp-1} k\kla{\varx,\vary} u\klami{\vary} }
{d \vary}
=  f\klasm{\varx}
\for \intervalarg{\varx}{0}{\xmax},
\label{eq:weaksing-inteq}
\end{eqnarray}
with $ 0 < \alpha < 1 $ 
and $ \xmax > 0 $,
and with a sufficiently smooth kernel function
$ k: \inset{(x,y) \in \reza^2 \ \mid \ 0 \le y \le x \le \xmax } \to \reza $,
and $ \Gamma $ denotes Euler's gamma function.
Moreover, the function $ f: \interval{0}{\xmax} \to \reza $
is supposed to be approximately given,
and a function $ u: \interval{0}{\xmax} \to \reza $
satisfying equation \refeq{weaksing-inteq} is to be determined.

In the sequel we suppose that the kernel function does not vanish on the
diagonal $ 0 \le \varx = \vary \le \xmax $, and
\mywlog we may assume that
\begin{align}
k\klasm{\varx,\varx} = 1 \for
\intervalarg{\varx}{0}{\xmax}
\label{eq:k_eq_one}
\end{align}
holds. 

For the approximate solution of equation \refeq{weaksing-inteq} with an exactly given \rhs $ f $, there exist many quadrature methods,
see \eg \mycitebtwo{Brunner}{van der Houwen}{Brunner_Houwen[86]}, \mycitea{Linz}{85}, 
and \mycitea{Hackbusch}{95}.
One of these methods is the \repmidrule which is considered in detail, \eg in
\myciteb{Weiss}{Anderssen}{72} and in \mycitea{Eggermont}{79}, see also
\cite[Section 10.4]{Linz[85]}.

In the present paper we investiate,
for perturbed \rhss in equation \refeq{weaksing-inteq},
the regularizing properties of the \repmidrule.
The smoothness of the solution is classified in terms of H\"older continuity of the function and its derivative is considered.  We also give a new proof of the inverse stability of the quadrature weights which
relies on Banach algebra techniques and may be of independent interest.
Finally, some numerical illustrations are presented.
\subsection{The Abel integral operator}
As a first step we consider in \refeq{weaksing-inteq}
the special situation $ k \equiv 1 $. On the other hand, for technical reasons we allow arbitrary intervals $ \interval{0}{b} $ with $ 0 < b \le a $ instead of the fixed interval $ \interval{0}{a} $ which allows to extend the obtained results for arbitrary kernels $ k $.

The resulting integral operator is the Abel integral operator
\begin{align}
\Ialp{\myfun}{\varx}
= 
\mfrac{1}{\Gamma\klami{\alpha}}
\ints{0}{\varx} 
{ \klasm{\varx-\vary}^{\alpha-1} \myfun\klami{\vary} }
{d \vary}
\for \intervalarg{\varx}{0}{\myb}, 
\label{eq:ialp_def}
\end{align}
where $ \myfun: \interval{0}{\myb} \to \reza $ is supposed to be a piecewise \cont function. One of the basic properties of the Abel integral operator
is as follows,
\begin{align}
\Ialp{\mon{q}}{x}
=
\tfrac{\Gamma\klafn{q+1}}{\Gamma\klafn{q+1+\alpha}}
\cdott x^{q+\alpha}
\for x \ge 0
\qquad \kla{q \ge 0},
\label{eq:ialp_monom}
\end{align}
where $ \mon{q} $ is short notation for the mapping $ y \mapsto y^q $.
In the sequel, frequently we make use of the following elementary estimate:
\begin{align}
\sup_{0 \le x \le \myb} 
\modul{\Ialp{\myfun}{x}}
\le \mfrac{\myb^\alpha}{\Gamma(\alpha+1)}
\sup_{0 \le x \le \myb} 
\modul{\myfun\klasm{y}} \qquad
\kla{\myfun : \interval{0}{\myb} \to \reza \textup{\ piecewise continuous}}.
\label{eq:ialp_norm}
\end{align}
Other basic properties of the Abel integral operator can be found 
\eg~in \myciteb{Gorenflo}{Vessella}{91}
or \mycitea{Hackbusch}{95}.
\section{The \repmidrule for Abel integrals}
\label{midpointrule-basics}
\subsection{The method}
For the numerical approximation of the Abel integral operator \refeq{ialp_def}
we introduce equidistant \gridpoints 
\begin{align}
\xn = \n \mydeltax, \qquad \n = k/2, \quad k = 0, 1, \ldots,2\N, 
\with \mydeltax = \frac{\xmax}{\N},
\label{eq:grid-points}
\end{align}
where $ \N $ is a positive integer.
For a given \cont function $ \myfun: \interval{0}{\xn} \to \reza
\ (\n \in \inset{\myseqq{1}{2}{\N} }) $, the \repmidrule for the numerical approximation of the Abel
integral $ \Ialp{\myfun}{\xn} $ 
is obtained by replacing
the function $ \myfun $ on each subinterval
$ \interval{\xjm}{\xj}, \ j = 1,2,\ldots,\n $, 
by the constant term $ \myfunjb $, respectively:
\begin{align}  
\Ialp{\myfun}{\xn}
& \approx
\gammaalpinv
\mysum{\jod=1}{\n}
\schweifklala{
\ints{\varx_{\jod-1}}{\varx_\jod} {\klasm{\xn -\vary}^{\alpha-1} }{d \vary}
}
\myfunjb
\label{eq:midpoint-rule-start}
\\
&=
\gammaalpinvone
\mysum{\jod=1}{\n}
\schweifklabi{ \kla{\xn - \xjm}^{\alpha} - \kla{\xn - \xj}^{\alpha} } 
\myfunjb
\nonumber
\\
&=
\mfrac{\halp}{\gammaalpone}
\mysum{\jod=1}{\n}
\schweifklabi{
 \kla{\n - \jod + 1}^{\alpha} \minus \kla{\n - \jod}^{\alpha}
}
\myfunjb
\nonumber
\\[-1mm]
&=
\mydeltaxalp \mysum{\jod=1}{\n} \an[\n-\jod] \myfunjb
=:
\Ialph{\myfun}{\xn},
\label{eq:midpoint-rule}
\end{align} 
where the quadrature weights $ \an[0], \an[1], \ldots $ are given by 
\begin{align} 
\an[s] & =
\gammaalpinvone
\schweifklabi{\klasm{s+1}^{\alp} - s^{\alp}}
\for s = 0, 1, \ldots \ .
\label{eq:omegan-def}
\end{align} 
The weights
have the asymptotic behavior
$ \myomegan = \frac{1}{\gammaalp}  \n^{\alpha-1} \plus \Landauno{\n^{\alpha-2}} $
as $ \n \to \infty $.
\subsection{The integration error -- preparations}
In the sequel, we consider the integration error 
\begin{align} 
\enn{\myfun}{\xn} = \Ialp{\myfun}{\xn} - \Ialph{\myfun}{\xn}
\label{eq:midpoint-rule-error-def}
\end{align} 
under different smoothness assumptions on the function $ \myfun $.
As a preparation, for $  c < d,  L \ge 0, m = 0, 1,
\ldots $ and $ 0 < \beta \le 1 $, we introduce the space $ \HLc{m+\beta}{c}{d} $
of all functions $ \myfun : \interval{c}{d} \to \reza $ that are continuously
differentiable up to order $m$, and the derivative $ \ableit{\myfun}{m} $ of order
$ m $ is H\"older continuous of order $ \beta $ with H\"older constant $ L \ge 0
$, \ie
\begin{align}
\HLc{m+\beta}{c}{d}
= \inset{\varphi \in C^m\interval{c}{d}
\mid 
\modul{\ableit{\varphi}{m}(x) - \ableit{\varphi}{m}(y)} \le L \modul{x-y}^\beta
\for x, y \in \interval{c}{d}}.
\label{eq:hoelder-with-L}
\end{align}
The space of H\"older continuous functions of order $ m + \beta $ on the interval
$ \interval{c}{d} $ is then given by  
\begin{align*}
\Hspc{m+\beta}{c}{d}
= \inset{\varphi: \interval{c}{d} \to \reza
\mid 
\myfun \in \HLc{m+\beta}{c}{d} \text{ for some constant } L \ge 0}.
\end{align*}
Other notations for those spaces are quite common, \eg $ C^{m,\beta}\interval{c}{d} $,
\cf \cite[section 2]{Brunner[04]}.
As a preparation, for $ n \in  \inset{\myseqq{1}{2}{N}} $ and 
$ \myfun: \interval{0}{\xn} \to \reza $
we introduce the corresponding piecewise constant interpolating spline
$ \ph \myfun: \interval{0}{\xn} \to \reza $, \ie
\begin{align}
(\ph \myfun)(\vary) \equiv \myfunjb
\for \xjm \le \vary < \xj \qquad \kla{\jod = 1,2,\ldots,\n},
\label{eq:ph-def}
\end{align}
and in the latter case $ j = \n $, this setting is also valid for $ \vary = \xn $.
For $ \myfun \in \Hsp{\gamma}{\xn} $ with $ 0 < \gamma \le 1 $,
it follows from zero order Taylor expansions at the \gridpoints
that 
\begin{align}
\myfun(y) = (\ph \myfun)(y) + \Landauno{h^\gamma},
\quad 0 \le y \le \xn,
\label{eq:interpol-error-1}
\end{align}
uniformly both on $ \interval{0}{\xn} $ and for $ \myfun \in \HLc{\gamma}{0}{\xn} $, with any 
arbitrary but fixed constant $ L \ge 0 $, and also uniformly for $ n = 1,2,\ldots, \N $.

We consider the smooth case $ \myfun \in C^1\interval{0}{\xn},
\ \n \in \inset{\myseqq{1}{2}{\N} } $, next. 
Let
$ \qh \myfun: \interval{0}{\xn} \to \reza $ be given by
\begin{align}
(\qh \myfun)(\vary) = \myfunjb + (\vary-\xjb)\myfunpjb
\for \xjm \le \vary < \xj \quad \kla{\jod = 1,2,\ldots, \n},
\label{eq:qh-def}
\end{align}
and
in the latter case $ j = \n $, this definition is extended to the case $ \vary = \xn $.
For $ \myfun \in \Hsp{\gamma}{\xn} $ with $ 1 < \gamma \le 2 $,
first order Taylor expansions at the \gridpoints
yield
\begin{align}
\myfun(y) = (\qh \myfun)(y) + \Landauno{h^\gamma},
\quad 0 \le y \le \xn,
\label{eq:interpol-error-2}
\end{align}
uniformly in the same manner as for \refeq{interpol-error-1}.
\subsection{The integration error}
We are now in a position to consider, under different smoothness conditions on the function $ \varphi $, representations for the integration errors $ \enn{\myfun}{\xn} $ introduced in \refeq{midpoint-rule-error-def}.
\begin{lemma}
Let $ n \in  \inset{\myseqq{1}{2}{N}} $, and moreover let 
$ \myfun: \interval{0}{\xn} \to \reza $ be a continuous function.
We have the following representations for the quadrature error 
$ \enn{\myfun}{\xn} $ introduced in \refeq{midpoint-rule-error-def}:
\begin{myenumerate_indent}
\item
We have
\begin{align}
\enn{\myfun}{\xn} = \Ialpkla{\myfun- \ph \myfun}{\xn}.
\label{eq:midpoint_error_0}
\end{align}
\item
For $ \myfun \in C^1\interval{0}{\xn} $
we have 
\begin{align}
\enn{\myfun}{\xn} = 
\halpone\mysum{\jod=1}{\n} \tau_{\n-\jod} \myfunpjb
+ \Ialpkla{\myfun- \qh \myfun}{\xn},
\label{eq:midpoint_error_1}
\end{align}
where
\begin{align} 
\tau_n 
& =
\mfrac{1}{\gammaalptwo}
\schweifkla{\klasm{\n+1}^{\alpone} \minus \n^{\alpone }}
-\mfrac{1}{2\gammaalpone}
\schweifkla{\klasm{\n+1}^{\alp} \plus \n^{\alp}}
\forsm \n = 0, 1, \ldots \ .
\label{eq:interr-beta+1-c}
\end{align} 
\end{myenumerate_indent}
\label{th:midpoint-error}
\end{lemma}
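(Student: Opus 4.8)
The plan is to verify both representations directly from the definition of the quadrature error, exploiting the linearity of both $\Ialp{\cdot}{\xn}$ and the discrete operator $\Ialph{\cdot}{\xn}$ together with the exact integration formula \refeq{ialp_monom} on monomials.

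For part (i), I would start from the identity $\enn{\myfun}{\xn} = \Ialp{\myfun}{\xn} - \Ialph{\myfun}{\xn}$ and observe that the key is the formula $\Ialph{\myfun}{\xn} = \Ialp{\ph\myfun}{\xn}$. This follows by comparing \refeq{midpoint-rule-start} with the definition \refeq{ph-def} of the piecewise constant spline: on each subinterval $\interval{\xjm}{\xj}$ the function $\ph\myfun$ equals the constant $\myfunjb$, so
\[
\Ialp{\ph\myfun}{\xn} = \gammaalpinv\mysum{\jod=1}{\n}\myfunjb\ints{\xjm}{\xj}{(\xn-\vary)^{\alpha-1}}{d\vary} = \Ialph{\myfun}{\xn},
\]
where the last equality is exactly the chain of computations in \refeq{midpoint-rule-start}--\refeq{midpoint-rule}. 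Subtracting from $\Ialp{\myfun}{\xn}$ and using linearity of $\Ialp{\cdot}{\xn}$ gives \refeq{midpoint_error_0}.

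For part (ii), the idea is to split $\myfun - \ph\myfun = (\myfun - \qh\myfun) + (\qh\myfun - \ph\myfun)$ and apply part (i), so that
\[
\enn{\myfun}{\xn} = \Ialp{\qh\myfun - \ph\myfun}{\xn} + \Ialp{\myfun - \qh\myfun}{\xn}.
\]
The second term is already the remainder appearing in \refeq{midpoint_error_1}, so everything reduces to computing $\Ialp{\qh\myfun - \ph\myfun}{\xn}$ in closed form. From \refeq{ph-def} and \refeq{qh-def}, on $\interval{\xjm}{\xj}$ we have $(\qh\myfun - \ph\myfun)(\vary) = (\vary - \xjb)\myfunpjb$, a linear function vanishing at the subinterval midpoint $\xjb$. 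Hence
\[
\Ialp{\qh\myfun - \ph\myfun}{\xn} = \gammaalpinv\mysum{\jod=1}{\n}\myfunpjb\ints{\xjm}{\xj}{(\xn-\vary)^{\alpha-1}(\vary-\xjb)}{d\vary}.
\]
The remaining task is the elementary but slightly tedious evaluation of this integral. Writing $\vary - \xjb = (\vary - \xn) + (\xn - \xjb)$ and using the antiderivatives of $(\xn-\vary)^{\alpha-1}$ and $(\xn-\vary)^{\alpha}$, one obtains a combination of $(\xn - \xjm)^{\alpha+1}$, $(\xn - \xj)^{\alpha+1}$, $(\xn - \xjm)^{\alpha}$ and $(\xn - \xj)^{\alpha}$ terms; after substituting $\xn - \xjm = (\n-\jod+1)h$ and $\xn - \xj = (\n-\jod)h$ and collecting powers of $h$, this must reproduce $\halpone\tau_{\n-\jod}$ with $\tau_n$ given by \refeq{interr-beta+1-c}.

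The main obstacle is purely computational: carrying out the single-subinterval integral and matching the resulting coefficients against the stated formula for $\tau_n$, in particular getting the $\tfrac{1}{\gammaalptwo}$ versus $\tfrac{1}{2\gammaalpone}$ normalization and the $(\n+1)^{\alpha}+\n^{\alpha}$ combination correct. There is no conceptual difficulty — the linearity argument and the splitting trick do all the structural work — but care is needed with the bookkeeping of the boundary terms and with the fact that $\xjb$ is the midpoint, which is precisely why the leading-order contribution of the linear piece does not vanish against $(\xn-\vary)^{\alpha-1}$ (it would vanish against a constant weight, but not against this singular one).
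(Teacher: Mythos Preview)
Your proposal is correct and follows essentially the same route as the paper: part (i) is derived from the identity $\Ialph{\myfun}{\xn} = \Ialp{\ph\myfun}{\xn}$, and part (ii) uses the decomposition $\myfun - \ph\myfun = (\qh\myfun - \ph\myfun) + (\myfun - \qh\myfun)$ together with the explicit evaluation of the subinterval integral $\gammaalpinv\int_{\xjm}^{\xj}(\xn-\vary)^{\alpha-1}(\vary-\xjb)\,d\vary = \halpone\tau_{\n-\jod}$. The paper simply asserts this last identity as an ``elementary computation'' without details, so your outline via $\vary-\xjb = (\vary-\xn)+(\xn-\xjb)$ is in fact more explicit than what the paper provides.
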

\proof
The error representation \refeq{midpoint_error_0}
is an immediate consequence of the identities \refeq{midpoint-rule-start} and \refeq{midpoint-rule}. 
For the verification of the second error representation \refeq{midpoint_error_1},
we use the decomposition 
\begin{align*} 
\enn{\myfun}{\xn} = \Ialpkla{\myfun- \ph \myfun}{\xn} = 
\Ialpkla{\qh \myfun - \ph \myfun}{\xn} + \Ialpkla{\myfun -\qh \myfun}{\xn}, 
\end{align*} 
and we have to consider the first term on the \rhs in more detail.
Elementary computations show that
\begin{align} 
\gammaalpinv \ints{\xjm}{\xj}{ \klasm{\xn - \vary}^{\alpha-1} \klasm{\vary \minus \varxjb}}
{d y} 
= \halpone \tau_{\n-\jod}
\for \jod = \myseqq{1}{2}{\n}.
\label{eq:interr-beta+1-b}
\end{align}
From \refeq{interr-beta+1-b}, the second error representation \refeq{midpoint_error_1} already follows.
\proofendspruch[ of the lemma]

\bn
A Taylor expansion of the \rhs of \refeq{interr-beta+1-c}
shows that the coefficients $ \tau_{\myl} $ have the following
asymptotic behavior:
\begin{align}
\tau_\myl &= 
\mfrac{1-\alp}{12 \gammaalp} \myl^{\alpha-2} \plus \Landauno{\myl^{\alpha-3} }
\as \myl \to \infty.
\label{eq:taul_asymp}
\end{align}
Lemma \ref{th:midpoint-error} is needed in the proof of our main theorem. It is stated in explicit form here since it immediately becomes clear from this lemma that,
for each $ \myfun \in \Hsp{\gamma}{a} $ with $ 0 < \gamma \le \alpone $,
the interpolation error satisfies
\begin{align*}
\enn{\myfun}{\xn} = \Landauno{h^\gamma} \as h \to 0
\end{align*}
uniformly for $ n = 0, 1, \ldots, \N $. This follows from 
\refeq{interpol-error-1} and \refeq{interpol-error-2}, and from the absolute summability
$ \mysumtxt{n=0}{\infty}\modul{\taun} \allowbreak < \infty $,
\cf \refeq{taul_asymp}.
\section{The \repmidrule for \weaklysingular first-kind Volterra integral equations with perturbations}
\label{quad-error}
\subsection{Some preparations}
We now return to the \weaklysingular integral equation \refeq{weaksing-inteq}. For the numerical approximation we consider this equation at grid points
$ \xn = \n \mydeltax, n = 1, 2, \ldots,\N $ with 
$ \mydeltax = \lfrac{\xmax}{\N} $, \cf\refeq{grid-points}. 
The resulting integrals are approximated by the \repmidrule, respectively, see \refeq{midpoint-rule} with $ \myfun(y) =  k\klasm{\xn,y} u\klasm{y} $ for
$ \intervalargno{y}{0}{\xn} $.

\Inthesequel we suppose that the \rhs of equation \refeq{weaksing-inteq} is only approximately given with
\begin{align}
\modul{ \fndelta - f\klasm{\xn} } \le \delta
\for n = \myseqq{1}{2}{\nmax},
\label{eq:rhs-assump}
\end{align}
where $ \delta > 0 $ is a known noise level. For this setting, the \repmidrule for the numerical solution of equation \refeq{weaksing-inteq} looks as follows:
\begin{align}
\halp \mysum{j=1}{n} \an[n-j] \cdott k\kla{\xn,\varxjb} \cdott \undeltab[j]
=
\fndelta, \qquad n = 1, 2, \ldots, \nmax.
\label{eq:midpoint-noise-rule}
\end{align}
The approximations $ \undeltab \approx u(\varxnb) $ for $ j = 1, 2, \ldots, \nmax $
can be determined recursively by using scheme \refeq{midpoint-noise-rule}.

For the main error estimates, we impose the following conditions.
\myassump{
\begin{myenumerate}
\item
\label{item:assump-u}
There exists a solution $ u: \interval{0}{\xmax} \to \reza $ to the integral equation
\refeq{weaksing-inteq} with $ u \in \Hsp{\gamma}{\xmax} $, where 
$ \calp \defeq \min\{\alp,1-\alp\} < \gamma \le 2 $.

\item
\label{item:assump-k=1}
There holds $ k\klasm{\varx,\varx} = 1 $ for each $ \intervalarg{\varx}{0}{\xmax}
$. 

\item
\label{item:assump-k-smooth}
The kernel function $ k $ has Lipschitz continuous partial derivatives up to the order 2.

\item
The \gridpoints $ \xn $ are given by \refeq{grid-points}.

\item
The values of the \rhs of equation \refeq{weaksing-inteq}
are approximately given at the \gridpoints, \cf\refeq{rhs-assump}.
\end{myenumerate}
}{midpoint-assump}
\subsection{\Fps}
As a preparation for the proof of the main stability result of the present paper, 
\cf{}Theorem \ref{th:main-midpoint},
we next consider \powser. \Inthesequel we identify sequences $ (b_\myl)_{\myl \ge 0} $ of complex numbers with their (formal) \fps
$ b\klasm{\myxi} = \sum_{\myl=0}^{\infty} b_\myl \myxi^\myl $, 
with $ \myxi \in \koza $.
Pointwise multiplication of two \powser
\begin{align*}
\klala{\mysum{\kaa=0}{\infty} b_\kaa \myxi^\kaa}
\cdot
\klala{\mysum{j=0}{\infty} c_j \myxi^j}
=
\mysum{n=0}{\infty} d_n \myxi^n,
\with
d_n \defeq 
\mysum{\kaa=0}{n} b_\kaa c_{n-\kaa}
\fortwo n = 0, 1, \ldots
\end{align*}
makes the set of \fps 
into a complex commutative algebra with unit element $ 1 + 0 \cdot \myxi + 0 \cdot \myxi^2 + \cdots $~.
For any \fps
$ b\klasm{\myxi} = \sum_{\myl=0}^{\infty} b_\myl \myxi^\myl $ 
with $ b_0 \neq 0 $, there exists a
\fps which inverts the \fps  $ b $ with respect to pointwise multiplication, 
and it is denoted by
$ 1/b\klasm{\myxi} $ or by
$ \powinv{b\klasm{\myxi}} $. For a thorough introduction to formal \fps see, 
\eg~\mycitea{Henrici}{74}.

\Inthesequel we consider the \inverse
\begin{align}
\powinv{\myomega\klasm{\myxi}}
=
\mysum{n=0}{\infty} \aninv \myxi^n
\label{eq:ainv-def}
\end{align}
of the \genfunc
$ \myomega\klasm{\myxi} =
\sum_{n=0}^{\infty} \an \cdott \myxi^n $, with $ \an $ as 
in \refeq{omegan-def}.
\begin{lemma}
The coefficients in \refeq{ainv-def} have the following properties:
\begin{align}
& \aninv[0] > 0, 
\qquad
\aninv < 0 \for n = 1,2,\ldots,
\label{eq:omeganinv-negative} \\
& \aninv[0] = \Gamma(\alpone)
= \mysum{n=1}{\infty} \modul{\aninv},
\label{eq:omeganinv-sum} \\
& \aninv = \Landauno{n^{\malpone}} \as n \to \infty.
\label{eq:omeganinv-decay}
\end{align}
\label{th:omeganinv-props}
\end{lemma}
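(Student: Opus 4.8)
The plan is to exploit the fact that the generating function $\myomega(\myxi)=\sum_{n\ge0}\an\myxi^n$ with $\an=\frac{1}{\Gamma(\alpha+1)}[(n+1)^\alpha-n^\alpha]$ is, up to the positive scalar $\frac{1}{\Gamma(\alpha+1)}$, the Cauchy product that telescopes: writing $\an=\frac{1}{\Gamma(\alpha+1)}\bigl(b_{n+1}-b_n\bigr)$ with $b_n=n^\alpha$ does not quite work because $b_0=0$, so instead I would recognize $\Gamma(\alpha+1)\myomega(\myxi)$ as $(1-\myxi)\,g(\myxi)$ where $g(\myxi)=\sum_{n\ge0}(n+1)^\alpha\myxi^n$. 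Indeed $(1-\myxi)\sum_{n\ge0}(n+1)^\alpha\myxi^n=\sum_{n\ge0}\bigl[(n+1)^\alpha-n^\alpha\bigr]\myxi^n$ after reindexing, so $\myomega(\myxi)=\frac{1}{\Gamma(\alpha+1)}(1-\myxi)g(\myxi)$. Consequently
\[
\powinv{\myomega(\myxi)}=\Gamma(\alpha+1)\,\frac{1}{1-\myxi}\cdot\frac{1}{g(\myxi)}.
\]
Now $g(\myxi)$ is, up to a shift, the polylog-type series; its reciprocal is governed by the classical fact that $\sum_{n\ge0}(n+1)^\alpha\myxi^n=(1-\myxi)^{-\alpha-1}h(\myxi)$ where $h$ is holomorphic and nonzero at $\myxi=1$ with $h(1)=1/\Gamma(\alpha+1)$ (this is the standard singularity analysis of $(1-\myxi)^{-\alpha-1}$, whose coefficients are $\binom{n+\alpha}{n}\sim n^\alpha/\Gamma(\alpha+1)$). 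Hence $\myomega(\myxi)=\frac{1}{\Gamma(\alpha+1)}(1-\myxi)^{-\alpha}h(\myxi)$ and therefore $\powinv{\myomega(\myxi)}=\Gamma(\alpha+1)(1-\myxi)^{\alpha}\,\frac{1}{h(\myxi)}$.

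From this closed form the three assertions follow in order. For \refeq{omeganinv-decay}: $\frac{1}{h(\myxi)}$ is holomorphic on a disc of radius $>1$ (since $h$ is nonvanishing there, by the nonvanishing of $\myomega$ away from $\myxi=1$, which must be checked), so its coefficients decay geometrically; the coefficients of $(1-\myxi)^\alpha$ are $\binom{\alpha}{n}(-1)^n$, which are $\Theta(n^{-\alpha-1})$ by Stirling; a standard convolution/singularity-analysis estimate then gives $\aninv=O(n^{-\alpha-1})$. For \refeq{omeganinv-negative}: the coefficients $\binom{\alpha}{n}(-1)^n$ of $(1-\myxi)^\alpha$ are $>0$ for $n=0$ and $<0$ for $n\ge1$ (since $0<\alpha<1$); I would argue that $1/h(\myxi)$ has all-nonnegative coefficients with positive constant term — most cleanly by observing that $1/g(\myxi)=(1-\myxi)^{\alpha+1}/h(\myxi)\cdot(\text{nothing})$… actually the cleanest route to the sign pattern is: $\frac{1}{\myomega(\myxi)}=\Gamma(\alpha+1)\frac{1-\myxi}{g(\myxi)}$, and I claim $\frac{1}{g(\myxi)}$ has coefficients $c_n$ with $c_0>0$, $c_n\le0$ for $n\ge1$; then $(1-\myxi)\sum c_n\myxi^n$ has coefficients $c_0>0$ and $c_n-c_{n-1}<0$ for $n\ge1$, giving exactly \refeq{omeganinv-negative}. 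The claim about $1/g$ is the analogue of the well-known fact that the reciprocal of a power series with positive, log-convex (or suitably decreasing-ratio) coefficients has the alternating-type sign pattern $+,-,-,-,\dots$; I would either cite this or derive it from the recursion $g_0c_0=1$, $\sum_{k=0}^n g_kc_{n-k}=0$ by induction using $(n+1)^\alpha/n^\alpha$ monotonicity. For \refeq{omeganinv-sum}: evaluate at $\myxi=1$. Since all $\aninv$ with $n\ge1$ are negative and the series $\sum|\aninv|$ converges (by \refeq{omeganinv-decay}, using $\alpha>0$), Abel summation gives $\myomega(1)^{-1}$ interpreted correctly; but $\myomega(\myxi)\to 0$ as $\myxi\to1^-$, so I instead use $\sum_{n\ge0}\aninv=\lim_{\myxi\to1^-}\powinv{\myomega(\myxi)}=\Gamma(\alpha+1)\lim_{\myxi\to1^-}(1-\myxi)^\alpha/h(\myxi)=0$ (valid since the series converges absolutely, by Abel's theorem). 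Hence $\aninv[0]=\sum_{n\ge1}|\aninv|$. The identity $\aninv[0]=\Gamma(\alpha+1)$ comes from comparing lowest-order coefficients: $\an[0]=1/\Gamma(\alpha+1)$, so $\aninv[0]=1/\an[0]=\Gamma(\alpha+1)$.

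The main obstacle I anticipate is establishing that $h(\myxi)$ — equivalently $\myomega(\myxi)(1-\myxi)^{-\alpha}\Gamma(\alpha+1)$, or the function $g(\myxi)(1-\myxi)^{\alpha+1}$ — extends holomorphically to a disc of radius strictly greater than $1$ and is nonvanishing there. This is precisely where the Banach-algebra/Wiener's-theorem machinery advertised in the abstract comes in: one wants $\powinv{\myomega(\myxi)/(\text{its singular factor})}$ to lie in a nice algebra (e.g. the Wiener algebra of absolutely summable sequences, or a weighted variant), and inverse-closedness then yields both the decay rate \refeq{omeganinv-decay} and the absolute summability used in \refeq{omeganinv-sum}. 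Concretely, I expect the paper to factor $\myomega(\myxi)=\frac{1}{\Gamma(\alpha+1)}(1-\myxi)^{-\alpha}\cdot\bigl[1+\text{(absolutely summable, with appropriate decay)}\bigr]$ and invoke a Wiener-type theorem on the algebra of sequences with $O(n^{-\alpha-1})$ decay (which is inverse-closed, being a Gelfand–Raikov–Shilov / Wiener-type algebra) to invert the bracketed factor; the sign structure \refeq{omeganinv-negative} is then the genuinely elementary part, handled by the reciprocal-of-monotone-coefficients induction sketched above, while \refeq{omeganinv-sum} follows by the Abel-limit argument once absolute summability is in hand.
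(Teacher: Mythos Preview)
Your overall architecture---factor $\myomega(\myxi)=(1-\myxi)^{-\alpha}r(\myxi)$ with $r$ in a weighted sequence algebra, then invert via a Wiener-type inverse-closedness theorem---is exactly what the paper does for the decay estimate \refeq{omeganinv-decay}. Your self-correction at the end (the bracketed factor is \emph{not} holomorphic past $|\myxi|=1$; one needs a weighted $\ell^\infty$-algebra of sequences with $O(n^{-\alpha-2})$ decay and its inverse-closedness) is on the mark, and the paper carries this out with the Rogozin spaces $\linfomeg$.

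However, your treatment of the sign pattern \refeq{omeganinv-negative} contains a concrete error and a wrong strategy. First, from $\myomega(\myxi)=\tfrac{1}{\Gamma(\alpha+1)}(1-\myxi)g(\myxi)$ you write $\tfrac{1}{\myomega(\myxi)}=\Gamma(\alpha+1)\tfrac{1-\myxi}{g(\myxi)}$; the correct inverse is $\Gamma(\alpha+1)\,\tfrac{1}{(1-\myxi)g(\myxi)}$. Second, even with the correct formula the route through $1/g$ does not deliver the signs: if $1/g(\myxi)=\sum c_n\myxi^n$ with $c_0>0$ and $c_n\le 0$ for $n\ge 1$, then multiplying by $\tfrac{1}{1-\myxi}$ produces the partial sums $\sum_{k\le n}c_k$, which are \emph{positive} (decreasing to $0$), not negative. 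Moreover, $g_n=(n+1)^\alpha$ has \emph{decreasing} ratios, so the Kaluza-type hypothesis you invoke does not apply to $g$. The paper instead applies Kaluza's theorem directly to the sequence $\myomegan$ itself: one checks that $\myomegan>0$ with $\myomega_{n+1}/\myomegan$ strictly increasing (a short mean-value computation), and Kaluza then yields $\aninv[0]>0$, $\aninv<0$ for $n\ge1$, together with $\sum_{n\ge1}|\aninv|=\aninv[0]$, which is \refeq{omeganinv-sum}.

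There is also a structural point you miss. Inverse-closedness of the weighted algebra requires $r(\myxi)\neq 0$ on the whole closed unit disc, and you flag this as ``must be checked'' without a mechanism. In the paper this nonvanishing is \emph{deduced from} the sign and sum properties: Kaluza gives $[2\Gamma(\alpha+1)\myomega(\myxi)]^{-1}=\tfrac12-\sum_{n\ge1}c_n\myxi^n$ with $c_n>0$ and $\sum c_n=\tfrac12$, and an Erd\H{o}s--Feller--Pollard type lemma then yields the uniform lower bound $|\myomega(\myxi)|\ge \tfrac{1}{2\Gamma(\alpha+1)}$ for $|\myxi|<1$, hence $r(\myxi)\neq 0$ on the closed disc. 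So in the paper's logic the ``elementary'' sign/sum part is not a side remark but a prerequisite for the Banach-algebra step that gives \refeq{omeganinv-decay}.
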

Estimate \refeq{omeganinv-decay} can be found in \mynocitea{Eggermont}{79}.
Another proof of \refeq{omeganinv-decay} which uses Banach algebra theory and may be of independent interest is given in section \ref{abel-stability} of the present paper.
Section \ref{abel-stability} also contains proofs of the other statements in Lemma \ref{th:omeganinv-props}.

Lemma \ref{th:omeganinv-props} is needed in the proof of our main result, \cf Theorem  
\ref{th:main-midpoint} below and section \ref{appendix_b}. 
We state the lemma here in explicit form since it is fundamental in the stability estimates.
\subsection{The main result}
We next present the first main result of this paper, cf.~the following theorem, where
different situations on the smoothness of the  solution $ u $ are considered. For comments on the estimates presented 
in the theorem, see Remark \ref{th:main-midpoint-remark} below.
\begin{theorem}
\label{th:main-midpoint}
Let the conditions of Assumption \ref{th:midpoint-assump} be 
satisfied, and consider the approximations $ \myseqq{\undelta[1/2]}{\undelta[3/2]}{\undeltab[\nmax]} $
determined by scheme \refeq{midpoint-noise-rule}.
Let $ \calp \defeq \min\{\alp,1-\alp\} $.
\begin{myenumerate}
\item 
If $ \calp < \gamma \le 1 +  \calp $, then we have
\begin{align}
\max_{\mj=\myseqq{1}{2}{\nmax}} 
\modul{\undeltab - u\klasm{\varxnb} } 
= \Landauno{h^{\gamma-\calp} + \mfrac{\delta}{\halp}} \as \hdeltatonull.
\label{eq:th-main-midpoint-a}
\end{align}
\item Let $ 2-\alp < \gamma \le 2 $, and in addition let $ u(0) = \prim{u}(0) = 0 $ be satisfied.
Then 
\begin{align}
\max_{\mj=\myseqq{1}{2}{\nmax}} 
\modul{\undeltab - u\klasm{\varxnb} } 
=
\Landauno{h^{\gamma-1+\alp} + \mfrac{\delta}{\halp}} \as \hdeltatonull.
\label{eq:th-main-midpoint-c}
\end{align}
\end{myenumerate}
\end{theorem}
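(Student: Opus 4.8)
The plan is to derive an error equation for $e_j^\delta := \undeltab[j] - u(\varxjb)$ by inserting the true solution into the numerical scheme, and then to invert the discrete convolution using Lemma \ref{th:omeganinv-props}. First I would write the exact equation at the grid point $\xn$ as $\Ialp{k(\xn,\cdot)u}{\xn} = f(\xn)$ and subtract the scheme \refeq{midpoint-noise-rule}; since $k(\xn,\varxjb)=1+O(h)$ on the diagonal by \ref{item:assump-k=1} and \ref{item:assump-k-smooth}, and the off-diagonal behaviour of $k$ is smooth, the consistency error splits into the pure Abel quadrature error $\enn{k(\xn,\cdot)u}{\xn}$ — controlled by Lemma \ref{th:midpoint-error} and the remarks following it — plus a data error $\fndelta-f(\xn)=O(\delta)$ and a perturbation coming from replacing $k(\xn,\varxjb)$ by $1$. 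The key structural point is that, after dividing by $\halp$, the principal part of the scheme is the discrete convolution with the weights $\an$, whose inverse is the sequence $\aninv$ from \refeq{ainv-def}.

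Next I would set up the abstract framework: let $A_h$ denote the lower-triangular Toeplitz matrix with entries $\halp\an[n-j]$ (the pure Abel part), and write the scheme as $(A_h + B_h)U^\delta = F^\delta$, where $B_h$ collects the $O(h)$ kernel corrections; correspondingly the error satisfies $(A_h+B_h)E^\delta = R^\delta$ with $R^\delta$ the total consistency-plus-data residual. The inverse $A_h^{-1}$ is the Toeplitz matrix with entries $\halp^{-1}\aninv[n-j]$ — this is exactly where Lemma \ref{th:omeganinv-props} enters — and by \refeq{omeganinv-sum} the $\ell^\infty\to\ell^\infty$ operator norm of $A_h^{-1}$ is $\halp^{-1}\Gamma(\alpone)\cdot(\text{constant})$, i.e.\ $O(\halp^{-1})$, uniformly in $N$; the decay \refeq{omeganinv-decay} is what guarantees this row-sum bound is finite. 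A Neumann-series / perturbation argument then shows $(A_h+B_h)^{-1}$ has the same $O(\halp^{-1})$ bound for $h$ small, because $A_h^{-1}B_h$ has norm $O(h)\cdot O(\halp^{-1})\cdot O(\halp)=O(h)$ once one checks $B_h$ has $\ell^\infty$ norm $O(h^{1+\alpha})$ (each of its $n$ rows has entries bounded by $h\cdot\halp|\an[n-j]|$, summing to $O(h\cdot\halp)$). Hence $\|E^\delta\|_\infty \le C\halp^{-1}\|R^\delta\|_\infty$.

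It remains to estimate $\|R^\delta\|_\infty$ in each of the two regimes. For part (i), $u\in\Hsp{\gamma}{\xmax}$ with $\calp<\gamma\le 1+\calp$ gives, via \refeq{midpoint_error_0}--\refeq{midpoint_error_1} and the interpolation bounds \refeq{interpol-error-1}, \refeq{interpol-error-2} together with $\sum|\taun|<\infty$, a quadrature error $\enn{k(\xn,\cdot)u}{\xn}=O(h^\gamma)$; adding the $O(\delta)$ data term yields $\|R^\delta\|_\infty=O(h^\gamma+\delta)$, and multiplying by $\halp^{-1}$ produces $O(h^{\gamma-\calp}+\delta\halp^{-1})$... wait, one must be careful: the naive bound gives $h^{\gamma-\alpha}$, not $h^{\gamma-\calp}$, so a sharper analysis of the residual near the origin is needed — this is the main obstacle and is handled by noting that the relevant smoothing is governed by $\calp=\min\{\alpha,1-\alpha\}$ through a more delicate splitting of $R^\delta$ (boundary layer at $x=0$ versus interior), exploiting cancellation in the convolution with $\aninv$ rather than the crude row-sum bound. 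For part (ii), the additional hypotheses $u(0)=\prim u(0)=0$ kill the boundary terms that otherwise obstruct the rate, and with $\gamma>2-\alpha$ one has enough smoothness that the $\tau$-sum in \refeq{midpoint_error_1} contributes $O(h^{\alpone})\cdot O(h^{\gamma-1})$-type refinements; combining the sharpened residual bound $O(h^{\gamma-1+\alpha}\cdot\halp)$ with $\|A_h^{-1}\|=O(\halp^{-1})$ gives \refeq{th-main-midpoint-c}. I expect the genuinely hard part to be the refined residual estimate that upgrades the exponent from $\gamma-\alpha$ to $\gamma-\calp$ in part (i): this requires tracking the interaction between the $O(h^\gamma)$ consistency error and the inverse weights $\aninv=O(n^{-\alpha-1})$ through the summation-by-parts structure, so that the worst case at the first few grid points near $x=0$ does not dominate — the bulk of the technical work, likely relegated to the appendix (section \ref{appendix_b}), lives here.
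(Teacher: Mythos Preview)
Your overall architecture---error equation, Toeplitz inversion via Lemma \ref{th:omeganinv-props}, residual bound---matches the paper, and your perturbation argument for the kernel is essentially the paper's $D_h A_h = I_h + K_h$ plus discrete Gronwall. But your diagnosis of the ``main obstacle'' is off. The exponent $\calp=\min\{\alpha,1-\alpha\}$ does \emph{not} come from a boundary-layer splitting near $x=0$ or from cancellation at the first few grid points. It comes from running \emph{two independent arguments} and taking the better one: the direct route $\|E^\delta\|_\infty \le C h^{-\alpha}\|r_h\|_\infty = O(h^{\gamma-\alpha})$ is optimal only when $\alpha\le\tfrac12$. When $\alpha\ge\tfrac12$ the paper premultiplies the error equation by $D_h$ (the Toeplitz matrix of the $\aninv$) and then applies partial summation to $(D_h r_h)_n=\sum_j \aninv[n-j]\,r_{h,j}$, converting it into $\beta_n r_{h,1}+\sum_\ell \beta_{n-\ell}(r_{h,\ell+1}-r_{h,\ell})$ with $\beta_n=\sum_{j<n}\aninv[j]=O(n^{-\alpha})$. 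The point is that the \emph{differences} $r_{h,\ell+1}-r_{h,\ell}$ are each $O(h^{\gamma+\alpha})$ (this uses a Taylor expansion of $k$ in its first variable), and $\sum_\ell \beta_\ell=O(h^{\alpha-1})$, giving $\|D_h r_h\|_\infty=O(h^{\gamma+2\alpha-1})$ and hence $\|E^\delta\|_\infty=O(h^{\gamma+\alpha-1})=O(h^{\gamma-(1-\alpha)})$. So the mechanism is smoothness of the residual sequence in the index $\ell$, not its size near $\ell=1$.

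For part (ii) the same partial-summation device is applied, now to the decomposition $r_h = h^{\alpha+1}r_h^a + r_h^b$ coming from \refeq{midpoint_error_1}. The initial conditions $u(0)=u'(0)=0$ are needed precisely to make the boundary term $\beta_n r^a_{h,1}=\beta_n\tau_0\,\varphi_1'(x_{1/2})$ of the partial summation small (namely $O(h^{\gamma-1})$); without them this term is $O(1)$ and the argument stalls. Your sketch (``kill the boundary terms'', ``$\tau$-sum contributes $O(h^{\alpha+1})\cdot O(h^{\gamma-1})$'') gestures at the right ingredients but misses that the gain again comes through $D_h$-premultiplication followed by Abel summation, not from a direct bound on $r_h$ itself.
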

The proof of Theorem \ref{th:main-midpoint} is given in section \ref{appendix_b}. 
Below we give some comments on Theorem \ref{th:main-midpoint}.
\begin{remark}
\label{th:main-midpoint-remark}
\begin{myenumerate}
\item 
\label{it:alp_le_1_2}
In the case $ \alp \le \tfrac{1}{2} $ we have the following estimates: 
\begin{align*}
\max_{\mj=\myseqq{1}{2}{\nmax}} \modul{\undelta - u\klasm{\varxnb} } 
=  \left\{ \begin{array}{rl}
\Landauno{h^{\gamma-\alp} + \mfrac{\delta}{\halp}}, & \textup{if} \  \alp < \gamma \le \alpone, \\
\Landauno{h^{\gamma-1 +\alp} + \mfrac{\delta}{\halp}}, & \textup{if} \  2-\alp < \gamma \le 2, 
\ u(0) = \prim{u}(0) = 0.
\end{array}
\right.
\end{align*}

\item
\label{it:alp_ge_1_2}
In the case we $ \alp \ge \tfrac{1}{2} $ the following estimates hold: 
\begin{align*}
\max_{\mj=\myseqq{1}{2}{\nmax}} \modul{\undelta - u\klasm{\varxnb} } 
= 
\Landauno{h^{\gamma- 1 + \alp} + \mfrac{\delta}{\halp}}, & \textup{ if } \  1-\alp < \gamma \le 2-\alp, \\[-3mm]
& \textup{ or if } \ 2-\alp < \gamma \le 2, \ u(0) = \prim{u}(0) = 0. 
\end{align*}

\item 
\label{it:weiss_eggermont}
The noise-free rates, obtained for $ \gamma = 1 $ and $ \gamma = 2 $, 
basically coincide with those given in the papers by
\myciteb{Weiss}{Anderssen}{72} and by Eggermont \mynocitea{Eggermont}{81}.
\item
\label{it:max_rate}
The maximal rate in the noise-free case $ \delta = 0 $ is
$ \Landauno{h} $ without initial conditions, and it is
obtained for $ \gamma = 1 +\calp $. This rate is indeed maximal, which can be seen by considering
the error at the first \gridpoint $ x_{1/2} $, obtained for the function $ u(y) = y $,
\cf \myciteb{Weiss}{Anderssen}{72}.
Under the additional assumption $ u(0) = \prim{u}(0) = 0 $, the maximal rate
is $ \Landauno{h^{\alpone}} $, obtained for $ \gamma = 2 $. 

\item It is not clear if the presented rates are optimal.
\remarkend
\end{myenumerate}
\end{remark}
\Inthesequel for \stepsizes $ h = \xmax/N $ we write, with a slight abuse of notation, 
$ h \sim \delta^\beta $ as $ \delta \to 0 $, 
if there exist real constants $ c_2 \ge c_1 > 0 $ such that
$ c_1 h \le \delta^{\beta} \le c_2 h $ holds for $ \delta \to 0 $.
As an immediate consequence of Theorem \ref{th:main-midpoint}
we obtain the following main result of this paper.
\begin{corollary}
Let Assumption \ref{th:midpoint-assump} be satisfied.
\begin{mylist_indent}
\item
Let $ \alpha \le 1/2 $ and $ \alpha < \gamma \le \alpone $. For
$ h = h(\delta) \sim \delta^{1/\gamma}$ we have
\begin{align*}
\max_{\mj=1,2, \ldots, \nmax} \modul{\undeltab \minus u\klasm{\varxnb}} 
= \Landauno{\delta^{1-\lfrac{\alp}{\gamma}}}
\as \delta \to 0.
\end{align*}

\item
Let one the following two conditions be satisfied: 
(a) $ \alpha \ge 1/2, \, 1- \alpha < \gamma \le 2-\alp $, or (b)
$ \gamma > 2-\alp, \, u(0) = \prim{u}(0) = 0 $. Then for $ h = h(\delta) \sim \delta^{1/(\gamma-1+2 \alp)} $ we have
\begin{align*}
\max_{\mj=1,2, \ldots, \nmax} \modul{\undeltab \minus u\klasm{\varxnb}} 
= \Landaubi{\delta^{1- \tfrac{\alp}{\noklafn{\gamma-1+2\alp}}}}
\as \delta \to 0.
\end{align*}
\end{mylist_indent}
\label{th:noise-corollary}
\end{corollary}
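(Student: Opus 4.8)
The plan is to derive Corollary \ref{th:noise-corollary} directly from Theorem \ref{th:main-midpoint} by optimizing the rate expression over the step size $h$ for a fixed noise level $\delta$. The total error bound from the theorem has the form $h^{\rho} + \delta/h^{\alpha}$, where in part (i) the exponent is $\rho = \gamma - \calp$, and in parts (ii) (as well as case (a) of the corollary, via Remark \ref{th:main-midpoint-remark}\,(\ref{it:alp_ge_1_2})) the exponent is $\rho = \gamma - 1 + \alpha$. The first term decreases in $h$ and the second increases in $h$, so the optimal balance is achieved (up to constants) when the two terms are of the same order, i.e.\ $h^{\rho} \sim \delta/h^{\alpha}$, which gives $h \sim \delta^{1/(\rho+\alpha)}$ and a resulting rate $\delta^{\rho/(\rho+\alpha)} = \delta^{1 - \alpha/(\rho+\alpha)}$.

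First I would treat part (i) of the corollary, where $\alpha \le 1/2$ and $\alpha < \gamma \le \alpone$. Here $\calp = \min\{\alpha, 1-\alpha\} = \alpha$, so Theorem \ref{th:main-midpoint}(i) applies with $\rho = \gamma - \alpha$, giving the bound $\Landauno{h^{\gamma-\alpha} + \delta/h^{\alpha}}$. Then $\rho + \alpha = \gamma$, so the optimal choice is $h \sim \delta^{1/\gamma}$, yielding the stated rate $\Landauno{\delta^{1 - \alpha/\gamma}}$. One must check that this choice is admissible, i.e.\ that $h = \xmax/N$ for integer $N$ can be taken comparable to $\delta^{1/\gamma}$; this is exactly the point of the $h \sim \delta^{\beta}$ notation introduced just before the corollary, so it suffices to take $N = \lceil \xmax \delta^{-1/\gamma} \rceil$ (or similar) and note that the resulting $h$ satisfies $c_1 h \le \delta^{1/\gamma} \le c_2 h$ for small $\delta$.

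For part (ii), under condition (a) we have $\alpha \ge 1/2$ and $1-\alpha < \gamma \le 2-\alpha$; by Remark \ref{th:main-midpoint-remark}\,(\ref{it:alp_ge_1_2}) the error is $\Landauno{h^{\gamma-1+\alpha} + \delta/h^{\alpha}}$, and under condition (b), $\gamma > 2-\alpha$ with $u(0) = \prim{u}(0) = 0$, Theorem \ref{th:main-midpoint}(ii) gives the same form $\Landauno{h^{\gamma-1+\alpha} + \delta/h^{\alpha}}$ (recalling $h^\alpha = \mydeltaxalp$ up to the notational identification, and $\gamma - 1 + \alpha > 0$ in both cases since $\gamma > 1-\alpha$). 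Hence $\rho = \gamma - 1 + \alpha$ and $\rho + \alpha = \gamma - 1 + 2\alpha$, so balancing gives $h \sim \delta^{1/(\gamma - 1 + 2\alpha)}$ and the rate $\delta^{1 - \alpha/(\gamma - 1 + 2\alpha)}$, exactly as claimed. The same admissibility remark applies.

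I do not anticipate a serious obstacle here, since this is essentially the standard a priori parameter-choice argument for regularization methods: the only things to verify are the algebraic bookkeeping of exponents (in particular that $\calp = \alpha$ in the regime of part (i) and $\calp = 1-\alpha$ is \emph{not} the relevant quantity in part (ii), where instead the exponent $\gamma - 1 + \alpha$ comes directly from the theorem and the remark) and that the denominators $\rho + \alpha$ are positive so that the exponent $1/(\rho+\alpha)$ makes sense — which holds because $\gamma > \calp$ in case (i) forces $\rho = \gamma - \alpha > 0$, and $\gamma > 1 - \alpha$ in the cases of (ii) forces $\rho = \gamma - 1 + \alpha > 0$. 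If anything requires care, it is merely making the informal ``balancing'' rigorous: one shows the upper bound by plugging the chosen $h$ into the two-term estimate and bounding each term by $\Landauno{\delta^{1-\alpha/(\rho+\alpha)}}$, with no matching lower bound claimed, so the argument is a one-directional estimate and thus short.
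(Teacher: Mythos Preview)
Your proposal is correct and matches the paper's approach: the paper states the corollary as an ``immediate consequence'' of Theorem~\ref{th:main-midpoint} without giving any further details, and your balancing argument is exactly the standard computation that makes this immediate consequence explicit. Your bookkeeping of the exponents (in particular the identification $\calp = \alpha$ for $\alpha \le 1/2$ in part~(i), and the use of Theorem~\ref{th:main-midpoint}(i) with $\calp = 1-\alpha$ for part~(ii)(a)) is accurate.
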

Note that in the case $ \alpha < \tfrac{1}{2} $, for the class of functions satisfying the initial conditions $ u(0) = \prim{u}(0) = 0 $,
there is a gap for $ \alpone < \gamma \le 2 - \alp $ where no improvement in the rates is obtained, \ie we have piecewise saturation $ \Landauno{\delta^{1-\lfrac{\alp}{\gamma}}} $  for this range of $ \gamma $. This is due to the different techniques used in the proof of Theorem \ref{th:main-midpoint}.

We conclude this section with some more remarks.
\begin{remark}
\begin{myenumerate}
\item
We mention some results on other quadrature schemes for the approximate solution of \weaklysingular integral equations of the first kind. The product trapezoidal method is considered, e.g., in 
\mycitea{Weiss}{72}, \mycitea{Eggermont}{81}, and in
\mynocitea{Plato}{12}. Fractional multistep methods are treated in Lubich
\cite{Lubich[86b], Lubich[87]} 
and in \mynocitea{Plato}{05}.
Backward difference product integration methods are considered in
Cameron and McKee~\cite{Cameron_McKee[84],Cameron_McKee[85]}. 

Galerkin methods for Abel-type integral equations are considered, e.g., in
\mycitea{Eggermont}{81} and in V\"ogeli, Nedaiasl and Sauter~\cite{Voegeli_Nedaiasl_Sauter[72]}.
Some general references are already given in the beginning of this paper.
\item
For other special regularization methods for the approximate solution of
Volterra integral equations of the first kind with perturbed \rhss
and
with possibly algebraic-type weakly singular kernels, see \eg
\mycitea{Bughgeim}{99},
\myciteb{Gorenflo}{Vessella}{91},
and the references therein.
\end{myenumerate}
\label{th:main-remark}
\end{remark}
\begin{remark}
The results of Theorem \ref{th:main-midpoint} and Corollary
\ref{th:noise-corollary} can be extended to linear Volterra integral equations of the first kind with smooth kernels, that is, for $ \alpha = 1 $. The resulting method is in fact the classical midpoint rule, and the main error estimate is as follows: if $ 0 < \gamma \le 2 $, then we have
\begin{align*}
\max_{\mj=\myseqq{1}{2}{\nmax}} 
\modul{\undeltab - u\klasm{\varxnb} } 
= \Landauno{h^{\gamma} + \mfrac{\delta}{h}} \as \hdeltatonull,
\end{align*}
and initial conditions are not required anymore then. The choice $ h = h(\delta) \sim \delta^{1/(\gamma+1)}$ then gives
\begin{align*}
\max_{\mj=1,2, \ldots, \nmax} \modul{\undeltab \minus u\klasm{\varxnb}} 
= \Landauno{\delta^{\gamma/(\gamma+1)}}
\as \delta \to 0.
\end{align*}
The proof follows the lines used in this paper, with a lot of simplifications then. In particular, the inverse stability results derived in section \ref{abel-stability} can be discarded in this case.
We leave the details to the reader and indicate the basic ingredients only: 
we have $ \an = 1 $ and $ \taun = 0 $ for $ n = 0,1,\ldots $ then, and in addition, $ \aninv[0] = 1, \aninv[1] = -1 $, and $ \aninv = 0 $ for $ n = 2,3,\ldots $ holds.
For other results on the regularizing properties of the midpoint rule for solving linear Volterra integral equations of the first kind, see
\mynocitea{Plato}{17} and \mycitea{Kaltenbacher}{10}.
\label{th:alp=1}
\end{remark}
\section{Modified starting weights}
\label{start_weights}
For the \repmidrule \refeq{midpoint-rule}, 
applied to a continuous function $ \myfun: \interval{0}{a} \to \reza $,
and with \gridpoints as in \refeq{grid-points}, with $ 1 \le \n \le \N $ and $ \N \ge 2 $,
we now would like to overcome the conditions 
$   \myfun(0) = \prim{\myfun}(0) = 0 $.
For this purpose we consider the modification
\begin{align}
\mymetmod{h}{\myfun}{\xn} \Defeq
\myoverbrace{\halp \mysum{j=1}{n} \myomega_{n-j} \cdott \myfunjb }%
{\dis  = \mymetno{h}{\myfun}{\xn}}
\plus
\halp \mysum{j=1}{2} \w{n}{j} \cdott \myfunjb
\qquad
\qquad \label{eq:midpoint-mod}
\end{align}
as approximation to the fractional integral $ \Ialp{\myfun}{\xn} $
at the considered grid points $ \xn $, \resp.
See Lubich \cite{Lubich[86b], Lubich[87]}
 and \mynocitea{Plato}{05}
for a similar approach for fractional multistep methods.
In \refeq{midpoint-mod}, 
$ \w{n}{1} $ and $ \w{n}{2} $ 
are correction weights for the starting values that are specified
in the following.
In fact, for each $ n = \myseqq{1}{2}{\nmax} $ the correction weights are chosen
such that the modified \repmidrule \refeq{midpoint-mod} is exact at $ \xn = nh $ for polynomials of degree $ \le 1 $,
\ie
\begin{align}
\mymetmod{h}{\mon{q}}{\xn} \eq
\Ialp{\mon{q}}{\xn}
\for q \eq 0,1.
\label{eq:startweights_ansatz}
\end{align}
\subsection{Computation of the correction weights}
For each $ n = \myseqq{1}{2}{\nmax} $, a reformulation of \refeq{startweights_ansatz} gives the following linear system of two equations for the 
starting weights $ \w{n}{j}, \ j = 1,2 $:
\begin{align*}
\halp \kla{\w{n}{1} + \w{n}{2}} & = \enn{1}{\xn}, \qquad
\halpone \kla{\tfrac{1}{2}\w{n}{1} + \tfrac{3}{2}\w{n}{2}} = \enn{y}{\xn},
\end{align*}
\cf \refeq{midpoint-rule-error-def} for the introduction of $ \ennsym $.
On the other hand we have
\begin{align*}
\enn{1}{\xn} = 0, \qquad
\enn{y}{\xn} = \halpone \mysum{\jod=0}{\n-1} \tau_{\jod}.
\end{align*}
Those identities follow from representations
\refeq{midpoint_error_0} and \refeq{midpoint_error_1}, respectively.
From this we obtain
\begin{align}
-\w{n}{1} = \w{n}{2} = 
\mysum{\jod=0}{\n-1} \tau_{\jod}.
\label{eq:wnj-rep}
\end{align}
This in particular means that the correction weights are independent of $ h $. 
We finally note that the asymptotic behavior  
of the coefficients $ \tau_{\jod} $,
\cf \refeq{taul_asymp}, implies
\begin{align}
\w{n}{j} = \Landauno{1}
\as n \to \infty \qquad
\text{for} \quad j = 1,2.
\label{eq:wnj-stable}
\end{align}
\subsection{Integration error of the \modquamet}
We now consider, for each $ n = \myseqq{1}{2}{\nmax} $, the error of the \modrepmidrule,
\begin{align} 
\ennmod{\myfun}{\xn} = \Ialp{\myfun}{\xn} - \mymetmod{h}{\myfun}{\xn},
\label{eq:midpoint-rule-error-def-mod}
\end{align} 
where $ \myfun: \interval{0}{a} \to \reza $ denotes a continuous function.
\begin{lemma}
Let $ n \in  \inset{\myseqq{1}{2}{N}} $, and moreover let 
$ \myfun \in \Hsp{\gamma}{a} $, with $ 0 < \gamma \le 2 $.
We have the following representations of the modified quadrature error 
$ \ennmod{\myfun}{\xn} $ introduced in \refeq{midpoint-rule-error-def-mod}:
\begin{myenumerate_indent}
\item
In the case $  0 < \gamma \le 1 $ we have
$ \ennmod{\myfun}{\xn} = \enn{\myfun}{\xn} 
+ \Landauno{h^{\gamma + \alp}} $ as $ h \to 0 $.

\item
In the case $ 1 < \gamma \le 2 $ we have, with 
$ \myfuntil(y) \defeq \myfun(y) - \myfun(0)  - \prim{\myfun}(0) y $ 
for $ 0 \le y \le \xmax $,
\begin{align*} 
\ennmod{\myfun}{\xn} = \enn{\myfuntil}{\xn} 
+ \Landauno{h^{\gamma + \alp}} \as h \to 0.
\end{align*}
\end{myenumerate_indent}
Both statements hold uniformly for $ n = 1,2,\ldots, N $,
and for $ \myfun \in \HLc{\gamma}{0}{a} $, with any 
constant $ L \ge 0 $.
\label{th:midpoint-error-mod}
\end{lemma}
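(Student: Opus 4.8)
The plan is to start from the defining identity $\ennmod{\myfun}{\xn} = \enn{\myfun}{\xn} - \halp \sum_{j=1}^{2} \w{n}{j}\myfunjb$, which follows by subtracting \refeq{midpoint-mod} from \refeq{midpoint-rule} inside the definitions \refeq{midpoint-rule-error-def} and \refeq{midpoint-rule-error-def-mod}. Thus everything reduces to understanding the correction term $\halp \sum_{j=1}^{2} \w{n}{j}\myfunjb$. Using the explicit representation \refeq{wnj-rep}, namely $-\w{n}{1} = \w{n}{2} = \sum_{\jod=0}^{\n-1}\tau_{\jod}$, this correction term equals $\halp\,(\sum_{\jod=0}^{\n-1}\tau_\jod)\,(\myfunb[2] - \myfunb[1])$, where $\myfunb[1] = \myfun(h/2)$ and $\myfunb[2] = \myfun(3h/2)$ are the values at the first two midpoints $x_{1/2}, x_{3/2}$.

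For part (a), with $0 < \gamma \le 1$, I would use that $\myfun \in \HLc{\gamma}{0}{a}$ gives $|\myfunb[2] - \myfunb[1]| \le L\,|x_{3/2} - x_{1/2}|^\gamma = L\,h^\gamma$, uniformly in $n$. Combined with the uniform bound $|\sum_{\jod=0}^{\n-1}\tau_\jod| \le \sum_{\jod=0}^{\infty}|\tau_\jod| < \infty$ from the absolute summability noted after \refeq{taul_asymp}, the correction term is $O(h^{\alp}\cdot h^{\gamma}) = O(h^{\gamma+\alp})$, uniformly for $n = 1,2,\ldots,N$ and for $\myfun \in \HLc{\gamma}{0}{a}$. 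That gives $\ennmod{\myfun}{\xn} = \enn{\myfun}{\xn} + O(h^{\gamma+\alp})$ as claimed.

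For part (b), with $1 < \gamma \le 2$, the subtlety is that $\myfun$ is now $C^1$ with a H\"older derivative, so $\myfunb[2] - \myfunb[1]$ is only $O(h)$, not $O(h^\gamma)$, and the naive bound would only yield $O(h^{1+\alp})$. The remedy is to exploit the exactness property \refeq{startweights_ansatz}: the modified rule is exact on polynomials of degree $\le 1$, hence the correction term applied to $\myfun$ agrees with the correction term applied to $\myfuntil(y) = \myfun(y) - \myfun(0) - \prim{\myfun}(0)y$. Concretely, $\halp\sum_{j=1}^{2}\w{n}{j}\myfunjb = \halp\sum_{j=1}^{2}\w{n}{j}\myfuntiljb$ because $\sum_{j=1}^2 \w{n}{j}\,(\xjb)^q$ contributes $\Ialp{\mon{q}}{\xn} - \mymetno{h}{\mon{q}}{\xn} = \enn{\mon{q}}{\xn}$, which is $0$ for $q=0$ and, by \refeq{wnj-rep}, is exactly cancelled for $q=1$. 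Since $\myfuntil \in \Hsp{\gamma}{a}$ with $\myfuntil(0) = \prim{\myfuntil}(0) = 0$, a first-order Taylor expansion at $0$ gives $|\myfuntil(y)| = O(y^\gamma)$, so $|\myfuntilb[2] - \myfuntilb[1]| = O(h^\gamma)$ uniformly; the correction term is again $O(h^{\gamma+\alp})$. Finally I would observe that $\ennmod{\myfun}{\xn} = \enn{\myfun}{\xn} - \halp\sum_{j=1}^2\w{n}{j}\myfuntiljb$ and that $\enn{\myfun}{\xn} - \enn{\myfuntil}{\xn} = \enn{\myfun - \myfuntil}{\xn} = \enn{\,\myfun(0)\cdot\mon{0} + \prim{\myfun}(0)\cdot\mon{1}\,}{\xn}$, which is a linear combination of $\enn{1}{\xn} = 0$ and $\enn{y}{\xn} = \halpone\sum_{\jod=0}^{\n-1}\tau_\jod = \halp\,\w{n}{2}\cdot h/\,$... more directly, $\halp\sum_{j=1}^2\w{n}{j}\myfuntiljb = \halp\sum_{j=1}^2\w{n}{j}\myfunjb - \enn{\myfun-\myfuntil}{\xn}$ so that $\ennmod{\myfun}{\xn} = \enn{\myfuntil}{\xn} + (\text{correction on }\myfuntil)$; rearranging gives exactly $\ennmod{\myfun}{\xn} = \enn{\myfuntil}{\xn} + O(h^{\gamma+\alp})$.

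The main obstacle is the bookkeeping in part (b): one must be careful that the "exactness on degree-$\le 1$ polynomials" is used in the right place, i.e.\ to replace $\myfun$ by $\myfuntil$ inside the correction term while simultaneously tracking how $\enn{\myfun}{\xn}$ changes to $\enn{\myfuntil}{\xn}$. The cleanest route is to verify the single algebraic identity $\ennmod{\myfun}{\xn} = \enn{\myfuntil}{\xn} - \halp\sum_{j=1}^2\w{n}{j}\myfuntiljb$ directly from \refeq{startweights_ansatz} (linearity in $\myfun$ plus exactness), and only then apply the decay estimate $|\myfuntiljb| = O(h^\gamma)$ together with $\w{n}{j} = O(1)$ from \refeq{wnj-stable}. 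All uniformity claims then follow because every constant involved ($\sum|\tau_\jod|$, the Taylor remainder constant depending only on $L$ and $\gamma$, and the $O(1)$ bound on $\w{n}{j}$) is independent of $n$ and of the particular $\myfun \in \HLc{\gamma}{0}{a}$.
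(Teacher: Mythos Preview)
Your approach is correct and coincides with the paper's: part (a) uses $-\w{n}{1}=\w{n}{2}$ together with $|\varphi(x_{3/2})-\varphi(x_{1/2})|\le Lh^\gamma$ and the uniform bound on $\w{n}{j}$, and part (b) uses linearity plus exactness of $\ennmodsym$ on affine functions to reduce to $\enn{\tilde\varphi}{x_n}-h^\alpha\sum_{j=1}^{2}\w{n}{j}\,\tilde\varphi(x_{j-1/2})$, after which $\tilde\varphi(x_{j-1/2})=O(h^\gamma)$ and \refeq{wnj-stable} finish the job. This is exactly the paper's argument.

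One correction for your write-up: the intermediate claim that the correction term on $\varphi$ equals the correction term on $\tilde\varphi$, i.e.\ $h^\alpha\sum_{j=1}^{2}\w{n}{j}\varphi(x_{j-1/2})=h^\alpha\sum_{j=1}^{2}\w{n}{j}\tilde\varphi(x_{j-1/2})$, is false. For the linear part $p(y)=\varphi(0)+\varphi'(0)y$ one has $h^\alpha\sum_{j=1}^{2}\w{n}{j}\,p(x_{j-1/2})=\varphi'(0)\,\enn{y}{x_n}\neq 0$ in general (only the constant part drops out, since $\w{n}{1}+\w{n}{2}=0$). What is true, and what you state correctly a few lines later, is that this nonzero contribution cancels against $\enn{p}{x_n}$ inside $\enn{\varphi}{x_n}=\enn{\tilde\varphi}{x_n}+\enn{p}{x_n}$, yielding the clean identity $\ennmod{\varphi}{x_n}=\enn{\tilde\varphi}{x_n}-h^\alpha\sum_{j=1}^{2}\w{n}{j}\tilde\varphi(x_{j-1/2})$. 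Drop the incorrect intermediate assertion and go straight to this identity (equivalently, write $\ennmod{\varphi}{x_n}=\ennmod{\tilde\varphi}{x_n}$ by exactness, then expand $\ennmodsym$ on $\tilde\varphi$), as you yourself suggest at the end.
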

\proof
\begin{myenumerate}
\item This follows immediately from \refeq{midpoint-mod} and
\refeq{wnj-rep}--\refeq{midpoint-rule-error-def-mod}:
\begin{align*}
\ennmod{\myfun}{\xn} = 
\enn{\myfun}{\xn} + \halp \w{n}{1} \klabi{\myfunjb[x_{3/2}] - \myfunjb[x_{1/2}]}
= \enn{\myfun}{\xn} + \Landauno{h^{\gamma + \alp}} \as h \to 0.
\end{align*}
\item
Using the notation $ \mytp(y) \defeq \myfun(0) + \prim{\myfun}(0) y $, we have
$ \myfun = \myfuntil + \mytp $, and the linearity of the modified error functional gives
\begin{align*}
\ennmod{\myfun}{\xn} & = 
\ennmod{\myfuntil}{\xn} + \myoverbrace{\myerrmod{h}{\mytp}{\xn}}{=0} 
=
\enn{\myfuntil}{\xn} 
\minus
\halp \mysum{j=1}{2} \w{nj} \cdott \myfuntil\kla{\xjb} 
\\[-1mm]
& = 
\enn{\myfuntil}{\xn} 
+ \Landauno{h^{\gamma + \alp }},
\end{align*}
where 
$ \myfuntil(y) = \Landauno{y^\gamma} $ as $ y \to 0 $ has been used,
and the boundedness of the correction weights, \cf \refeq{wnj-stable}, is also taken into account.
\proofend
\end{myenumerate}
\subsection{Application to the Abel-type first kind integral equation }
In what follows, the modified \repmidrule \refeq{midpoint-mod} is applied to numerically solve the algebraic-type weakly singular integral equation \refeq{weaksing-inteq},
with noisy data as in \refeq{rhs-assump}. 
In order to make the starting procedure applicable, in the sequel we assume that the kernel $ k $ can be smoothly extended beyond the triangle $ \inset { 0 \le y \le x \le \xmax } $. For simplicity we assume that the kernel is defined on the whole square.
\myassump{
The kernel function $ k $ has Lipschitz continuous partial derivatives up to the order 2
on $ \interval{0}{\xmax} \times \interval{0}{\xmax} $.}
{kernel-square-cont}
For each $ n = 1, 2, \ldots, \nmax $, we consider the modified \repmidrule \refeq{midpoint-mod} with $ \myfun(y) =  k\klasm{\xn,y} u\klasm{y} $ for $ \intervalargno{y}{0}{a}, \, n = 1, 2, \ldots, \nmax $. This results in the following modified scheme: 
\begin{align}
\halp \mysum{j=1}{n} \an[n-j] k\kla{\xn,\varxjb} \undeltabmod[j]
+ \halp \mysum{j=1}{2} \w{n}{j} k\kla{\xn,\varxjb} \undeltabmod[j]
=
\fndelta, \quad n = 1, 2, \ldots, \nmax.
\label{eq:midpoint-noise-rule-mod}
\end{align}
This scheme can be realized
by first solving a linear system of two equations for the approximations 
$ \undeltabmod[n] \approx u(\varxnb), \ n = 1, 2 $.
The approximations $ \undeltabmod[n] \approx u(\varxnb) $ for $ n = 3, 4, \ldots, \nmax $
can be determined recursively by using scheme \refeq{midpoint-noise-rule-mod} then.
\subsection{Uniqueness, existence and approximation properties
of the starting values}
\label{integsolvec}
We next consider uniqueness, existence and the approximation
properties of the two starting values $ \undeltamod[1/2] $ and $ \undeltamod[3/2] $.
They in fact satisfy the linear system of equations
\begin{align}
\halp \mysum{j=1}{2}
\kla{\myunderbrace{\myomega_{n-j} + \w{n}{j}}{\dis =: \ \omegbar{n}{j} }}
k\klasm{\xn,\xjb} \undeltabmod
=
\fndelta
\for
n = 1, 2,
\label{eq:midpoint_mod_eqsolve-start}
\end{align}
with the notation $ \myomega_{-1} = 0 $.
In matrix notation this linear system of equations 
can be written as
\begin{align}
& & \halp
\myoverbrace{
\left( \begin{array}{@{\cdott}c@{\quad }c@{\cdott}}
\omegbar{1}{1} \cdott k\klasm{x_1,x_{1/2}} &
\omegbar{1}{2} \cdott k\klasm{x_1,x_{3/2}} 
\\ [6mm]
\omegbar{2}{1} \cdott k\klasm{x_2,x_{1/2}} &
\omegbar{2}{2} \cdott k\klasm{x_2,x_{3/2}}
\end{array} \right)
}{=\Sh}
\left( \begin{array}{@{\ }c@{\ }}
\undeltamod[1/2] \\[4mm] \undeltamod[3/2] 
\end{array} \right)
=
\left( \begin{array}{@{\ }c@{\ }}
\fndelta[1] \\[4mm]
\fndelta[2]
\end{array} \right).
\label{eq:S-regular}
\end{align}
\begin{lemma}
The matrix $ \Sh \in \myrnn[2] $ in \refeq{S-regular} is regular
for sufficiently small values of $ h $, and
$ \maxnorm{\Sh^{-1}} = \Landausm{1} $ as $ h \to 0 $, where
$ \maxnorm{\cdot} $ denotes the matrix norm induced by the maximum vector norm on $ \reza^2 $. 
\label{th:S-regular}
\end{lemma}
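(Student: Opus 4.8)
The plan is to show that the matrix $\Sh$ converges, after a suitable diagonal rescaling, to a fixed regular matrix as $h\to 0$, and then to invoke the continuity of matrix inversion. First I would make the entries explicit. Using the definition of $\myomega$ in \refeq{omegan-def} together with the asymptotic expansion $\myomegan = \frac{1}{\gammaalp}\n^{\alpha-1} + \Landauno{\n^{\alpha-2}}$ and the boundedness of the correction weights \refeq{wnj-stable}, one sees that $\omegbar{n}{j} = \myomega_{n-j}+\w{n}{j}$ is $\Landauno{1}$, but more importantly one must extract the exact leading values: for $n=1$ we have $\myomega_0 = \gammaalpinvone$ and $\myomega_{-1}=0$, and for $n=2$ we have $\myomega_1 = \gammaalpinvone(2^\alpha-1)$ and $\myomega_0=\gammaalpinvone$. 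The correction weights are given in closed form by \refeq{wnj-rep}, namely $-\w{n}{1}=\w{n}{2}=\sum_{\jod=0}^{n-1}\tau_\jod$, so for $n=1$ we get $-\w{1}{1}=\w{1}{2}=\tau_0$ and for $n=2$ we get $-\w{2}{1}=\w{2}{2}=\tau_0+\tau_1$; these are explicit constants depending only on $\alpha$. Hence all four entries $\omegbar{n}{j}$ are explicit constants (independent of $h$), and the only $h$-dependence in $\Sh$ comes through the kernel values $k(x_n,x_{j-1/2})$.

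Next I would use Assumption \ref{th:kernel-square-cont} (Lipschitz smoothness of $k$) together with the normalization $k(x,x)=1$ from Assumption~\ref{th:midpoint-assump}\ref{item:assump-k=1}: since $x_n = nh \to 0$ and $x_{j-1/2}=(j-\tfrac12)h\to 0$ for $j=1,2$, we have $k(x_n,x_{j-1/2}) \to k(0,0) = 1$ as $h\to 0$, in fact $k(x_n,x_{j-1/2}) = 1 + \Landauno{h}$. Therefore
\begin{align*}
\halp[-1]\,\Sh \;\longrightarrow\;
\Sh[0] := \left(\begin{array}{cc}
\omegbar{1}{1} & \omegbar{1}{2} \\
\omegbar{2}{1} & \omegbar{2}{2}
\end{array}\right)
\as h\to 0.
\end{align*}
The crucial step is then to verify that $\Sh[0]$ is regular, i.e.\ $\det\Sh[0]\neq 0$. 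This is where I expect the main obstacle to lie: one must compute this $2\times 2$ determinant in terms of $\alpha$, $\Gamma(\alpha+1)$, $\tau_0$ and $\tau_1$, and argue it never vanishes for $0<\alpha<1$. Here the cleanest route is not brute-force expansion but rather exploiting the exactness condition \refeq{startweights_ansatz}: the modified rule with weights $\overline\omega_{n,j}=\myomega_{n-j}+\w{n}{j}$ (equivalently, the scaled row $\halp[-1]$ times row $n$ of $\Sh[0]$ paired with the monomials $1,y$ evaluated at $x_{1/2},x_{3/2}$) reproduces $\Ialp{\mon{0}}{\xn}$ and $\Ialp{\mon{1}}{\xn}$ exactly. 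Concretely, for $n=1,2$,
\begin{align*}
\sum_{j=1}^{2}\omegbar{n}{j}\,(j-\tfrac12)^q \;=\; h^{-\alpha}\,\Ialp{\mon{q}}{nh}\,h^{-q}
\;=\; \tfrac{\Gamma(q+1)}{\Gamma(q+1+\alpha)}\,n^{q+\alpha}
\for q=0,1,
\end{align*}
using \refeq{ialp_monom}. This exhibits $\Sh[0]$ as the product of a Vandermonde-type matrix in the nodes $\tfrac12,\tfrac32$ (which is regular) and a diagonal-with-mixing structure determined by the moment values on the right; a short computation of this identity shows $\det\Sh[0]$ equals a nonzero constant times the Vandermonde determinant $\tfrac32-\tfrac12 = 1$, hence is nonzero.

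Finally, once $\Sh[0]$ is known to be regular, regularity of $\Sh$ for all sufficiently small $h$ follows because the determinant is continuous in the matrix entries and $\det(\halp[-1]\Sh)\to\det\Sh[0]\neq 0$; and then $\maxnorm{\Sh^{-1}} = \halp\,\maxnorm{(\halp[-1]\Sh)^{-1}}$ — wait, the scaling goes the other way: $\Sh^{-1} = \halp[-1](\halp[-1]\Sh)^{-1}$, which is $\Landauno{\halp[-1]}$, not $\Landauno{1}$. So I would instead keep the factor $\halp$ attached to the unknowns rather than the matrix, or observe that the lemma's claim $\maxnorm{\Sh^{-1}}=\Landauno{1}$ must refer to $\Sh$ already including the $\halp$ factor being divided out — i.e.\ one reads off from \refeq{S-regular} that the relevant inverse is $(\halp\,\Sh[\text{entrymatrix}])^{-1}$ and the statement is about $\Sh$ as displayed, so $\maxnorm{\Sh^{-1}}=\halp[-1]\maxnorm{(\Sh[0]+\Landauno{h})^{-1}} = \Landauno{\halp[-1]}$; I would therefore present the boundedness of $(\halp[-1]\Sh)^{-1}$ as the substantive content, namely $\maxnorm{(\halp[-1]\Sh)^{-1}} = \maxnorm{\Sh[0]^{-1}} + \Landauno{h} = \Landauno{1}$, which is what feeds the stability estimate for the starting values $\undeltamod[1/2],\undeltamod[3/2]$. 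The main obstacle remains the determinant computation for $\Sh[0]$; everything else is continuity and bookkeeping.
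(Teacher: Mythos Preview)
Your approach is essentially the paper's: pass to the limit $h\to 0$ by using $k(x_n,x_{j-1/2})\to 1$, verify that the limiting constant matrix is regular via the exactness conditions \refeq{startweights_ansatz} (which factor it through a Vandermonde matrix in the nodes $\tfrac12,\tfrac32$), and then conclude by a perturbation argument. The paper writes exactly this, denoting the limit matrix by $T$ and deriving the moment identities
\[
\omegbar{n}{1}+\omegbar{n}{2}=\tfrac{n^\alpha}{\Gamma(\alpha+1)},\qquad
\tfrac12\omegbar{n}{1}+\tfrac32\omegbar{n}{2}=\tfrac{n^{\alpha+1}}{\Gamma(\alpha+2)},
\quad n=1,2,
\]
which is precisely your $T V = M$ factorization.

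Your confusion at the end is a misreading of \refeq{S-regular}: the overbrace ``$=\Sh$'' covers only the matrix of entries $\omegbar{n}{j}\,k(x_n,x_{j-1/2})$; the factor $h^\alpha$ sits outside $\Sh$. Hence $\Sh$ itself converges to the constant matrix $T=\Sh[0]$ (no rescaling needed), and $\maxnorm{\Sh^{-1}}\to\maxnorm{T^{-1}}$ is genuinely $\Landauno{1}$. Once you drop the spurious $h^{-\alpha}$ scaling, the worry in your last paragraph disappears and the proof is complete and matches the paper's.
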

\proof
We first consider the situation $ k \equiv 1 $ and denote the matrix $ \Sh $ by 
$ T $ in this special case.
From 
\refeq{ialp_monom}
and
\refeq{startweights_ansatz}
it follows
\begin{align*}
\omegbar{n}{1} + \omegbar{n}{2} = \mfrac{n^\alp}{\gammaalpone}
\qquad
\tfrac{1}{2} \omegbar{n}{1} + \tfrac{3}{2}\omegbar{n}{2} = \mfrac{n^{\alpone}}{\gammaalptwo},
\quad n = 1,2.
\end{align*}
Hence the matrix $ T $ is regular and does not depend on $ h $.

We next consider the general case for $ k $. Since 
$ k\klasm{x,x} = 1 $, we have $ k\klasm{x_n,x_m} \to 1 $ as $ h \to 0 $ uniformly for the four values of $ k $ considered
in the matrix $ \Sh $. This shows $ \Sh = T + \Delta $ with $ \maxnorm{\Delta}
\to 0 $ as $ h \to 0 $
so that the matrix $ \Sh $ is regular
for sufficiently small values $ h $,
with $ \maxnorm{\Sh^{-1}} $ being bounded as $ h \to 0 $.
\proofendspruch[ of the lemma]

\bn
We next consider the error of the modified \repmidrule at the first two grid
points $ x_{1/2} $ and $ x_{3/2} $.
\begin{proposition}
Let the conditions of Assumption \ref{th:midpoint-assump} and
Assumption \ref{th:kernel-square-cont} be 
satisfied.
Consider the approximations
$ \undeltamod[1/2] $ and $ {\undeltamod[3/2]} $
determined by scheme \refeq{midpoint-noise-rule-mod} for $ n = 1,2  $.
Then we have
\begin{align*}
\max_{\mj=1,2} 
\modul{\undeltabmod - u\klasm{\varxnb} } 
= \Landauno{h^{\gamma} + \mfrac{\delta}{\halp}} \as \hdeltatonull.
\end{align*}
\label{th:midpoint-mod-starting-values}
\end{proposition}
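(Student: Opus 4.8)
The idea is to start from the linear system \refeq{S-regular} governing the two starting values and compare it with the exact values $u(x_{1/2}), u(x_{3/2})$. Write $\vec{u}_h^\delta = (\undeltamod[1/2], \undeltamod[3/2])^{\top}$ and $\vec{u} = (u(x_{1/2}), u(x_{3/2}))^{\top}$. By Lemma \ref{th:S-regular} the matrix $\Sh$ is regular for small $h$ with $\maxnorm{\Sh^{-1}} = \Landauno{1}$, so it suffices to bound the residual $\maxnorm{\Sh \vec{u} - \vec{f}^\delta}$, since
\begin{align*}
\maxnorm{\vec{u}_h^\delta - \vec{u}} = \maxnorm{\Sh^{-1}(\vec{f}^\delta - \Sh\vec{u})} \le \maxnorm{\Sh^{-1}} \cdot \maxnorm{\Sh\vec{u} - \vec{f}^\delta}.
\end{align*}
The $n$-th component of $\Sh\vec{u} - \vec{f}^\delta$ splits, using \refeq{rhs-assump}, into a noise part of size $\delta$ and a consistency part $\big(\Sh\vec{u}\big)_n - f(x_n)$.

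For the consistency part, the key observation is that $\Sh\vec{u}$ is exactly the modified \repmidrule \refeq{midpoint-mod} applied to $\myfun(y) = k(x_n, y)u(y)$ but with the sums truncated at $j = 2$; for $n = 1$ and $n = 2$ the full sum in \refeq{midpoint-mod} has only $j = 1, 2$ terms anyway (here $x_1 = 2h$, $x_2 = 4h$), so $\big(\Sh\vec{u}\big)_n = \mymetmod{h}{k(x_n,\cdot)u(\cdot)}{x_n}$ with no truncation. Hence $\big(\Sh\vec{u}\big)_n - f(x_n) = -\ennmod{k(x_n,\cdot)u(\cdot)}{x_n}$ by \refeq{midpoint-rule-error-def-mod} and \refeq{weaksing-inteq}. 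Now I invoke Lemma \ref{th:midpoint-error-mod}: since $u \in \Hsp{\gamma}{\xmax}$ with $\gamma \le 2$ and $k(x_n, \cdot)$ has Lipschitz-continuous derivatives up to order $2$ (Assumption \ref{th:kernel-square-cont}), the product $y \mapsto k(x_n,y)u(y)$ lies in $\HLc{\gamma}{0}{a}$ with an $L$ uniform in $n$. In the case $\gamma \le 1$, Lemma \ref{th:midpoint-error-mod}(i) gives $\ennmod{\cdot}{x_n} = \enn{\cdot}{x_n} + \Landauno{h^{\gamma+\alp}}$, and since $x_n = \Landauno{h}$, the plain midpoint error $\enn{\cdot}{x_n}$ at such a short interval is $\Landauno{h^{\gamma+\alp}}$ by \refeq{midpoint_error_0}, \refeq{interpol-error-1} and \refeq{ialp_norm} (the factor $x_n^\alpha = \Landauno{h^\alpha}$). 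In the case $1 < \gamma \le 2$, Lemma \ref{th:midpoint-error-mod}(ii) gives $\ennmod{\myfun}{x_n} = \enn{\myfuntil}{x_n} + \Landauno{h^{\gamma+\alp}}$ where $\myfuntil$ vanishes to second order at $0$, and again $\enn{\myfuntil}{x_n} = \Landauno{h^{\gamma+\alp}}$ by the short-interval argument using \refeq{midpoint_error_1}, \refeq{interpol-error-2} and the summability of the $\taun$. Either way $\big(\Sh\vec{u}\big)_n - f(x_n) = \Landauno{h^{\gamma+\alp}}$.

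Combining, $\maxnorm{\Sh\vec{u} - \vec{f}^\delta} = \Landauno{h^{\gamma+\alp} + \delta}$, and multiplying by $\maxnorm{\Sh^{-1}} = \Landauno{1}$ yields
\begin{align*}
\max_{\mj=1,2} \modul{\undeltabmod - u(\varxnb)} = \Landauno{h^{\gamma+\alp} + \delta} = \Landauno{h^{\gamma} + \tfrac{\delta}{\halp}} \as \hdeltatonull,
\end{align*}
the last step because $h^{\gamma+\alp} \le h^\gamma$ and $\delta \le \delta/h^\alpha$ for $h \le 1$. The main obstacle is the bookkeeping in the consistency estimate: one must check that with $n \in \{1,2\}$ the modified midpoint sum really coincides with $\Sh\vec{u}$ (no hidden truncation beyond what the two-point stencil already is), and that Lemma \ref{th:midpoint-error-mod}'s $\Landauno{\cdot}$ constants are uniform in $n$ — here uniform in just two indices, so this is easy — and, crucially, that at the very short interval $[0, x_n]$ the plain errors $\enn{\cdot}{x_n}$ pick up the extra $h^\alpha$ from the $x_n^\alpha$ prefactor rather than only $h^\gamma$. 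Everything else is routine perturbation-of-a-$2\times 2$-system bookkeeping.
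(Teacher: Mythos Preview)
Your approach matches the paper's, but there is a bookkeeping slip that invalidates the intermediate bound: in \refeq{S-regular} the system for the starting values is $h^\alpha \Sh\, \vec{u}_h^\delta = \vec{f}^\delta$, with the explicit factor $h^\alpha$ in front of $\Sh$. Hence $(\Sh\vec{u})_n$ is \emph{not} equal to the modified quadrature applied to $k(x_n,\cdot)u$; rather $h^\alpha(\Sh\vec{u})_n = \mymetmod{h}{k(x_n,\cdot)u}{x_n}$. With this factor reinstated the error identity reads
\[
\vec{u}_h^\delta - \vec{u} \;=\; h^{-\alpha}\,\Sh^{-1}\big((\vec{f}^\delta-\vec{f}) + \vec{E}\big),
\qquad \vec{E}_n = \ennmod{k(x_n,\cdot)u}{x_n},
\]
and your consistency argument $\vec{E}_n = \Landauno{h^{\gamma+\alpha}}$ is correct. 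Lemma~\ref{th:S-regular} then gives $\maxnorm{\vec{u}_h^\delta - \vec{u}} = \Landauno{h^{-\alpha}(h^{\gamma+\alpha}+\delta)} = \Landauno{h^\gamma + \delta/h^\alpha}$ directly --- which is exactly the paper's argument. Your displayed intermediate estimate $\Landauno{h^{\gamma+\alpha}+\delta}$ is in fact too strong in the noise term: a perturbation of size $\delta$ in $\vec{f}^\delta$ genuinely produces an error of order $\delta/h^\alpha$ in the starting values, so the final ``weakening'' step is papering over an incorrect claim rather than being a harmless relaxation.

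Two minor asides: $x_1 = h$ and $x_2 = 2h$, not $2h$ and $4h$; and for $n=1$ the unmodified midpoint sum contains only the term $j=1$, the $j=2$ contribution entering solely through the correction weight via the convention $\omega_{-1}=0$ in \refeq{midpoint_mod_eqsolve-start} --- so your identification of $h^\alpha(\Sh\vec{u})_n$ with the modified rule is still right once the $h^\alpha$ is restored.
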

\begin{proof} 
From \refeq{midpoint-mod}, \refeq{midpoint-rule-error-def-mod}
and Lemma \ref{th:midpoint-error-mod},
applied with 
$ \myfun_n(y) =  k\klasm{\xn,y} u\klasm{y} $ for $ 0 \le y \le a $,
we obtain the representation
\begin{align*}
\halp \mysum{j=1}{2} \omegbar{n}{j} \cdott k\klasm{\xn,\xn[j-1/2]} \cdott
\enndeltamod[j-1/2]
= \ennmod{\myfun_n}{\xn}
+ \fndelta - f(\xn)
= \Landauno{h^{\gamalp} + \delta} 
\for n = 1, 2
\end{align*}
as $ \hdeltatonull $, where $ \enndeltamod = \undeltamod[j-1/2] -
u\klasm{x_{j-1/2}}, \  j = 1,2 $, and the weights 
$ \omegbar{n}{j} $ are introduced in \refeq{midpoint_mod_eqsolve-start}.
Note that
Lemma \ref{th:midpoint-error} and Lemma \ref{th:midpoint-error-mod} 
imply, for the two integers $ n = 1, 2 $, that
$ \ennmod{\myfun_n}{\xn} = \Landauno{h^{\gamma+ \alp}} $ as $ h \to 0 $. 
The proposition now follows from Lemma \ref{th:S-regular}.
\end{proof} 
\subsection{The regularizing properties of the modified scheme}
\begin{theorem}
Let the conditions of Assumption \ref{th:midpoint-assump} and
Assumption \ref{th:kernel-square-cont} be satisfied.
\begin{myenumerate}
\item 
In the case $ \alp \le 1/2 $ we have
\begin{align*}
\max_{\mj=\myseqq{1}{2}{\nmax}} 
\modul{\undeltamod[\mj-1/2] - u\klasm{\varxnb} } 
=  \left\{ \begin{array}{rl}
\Landauno{h^{\gamma-\alp}  + \mfrac{\delta}{\halp}} 
& \textup{if} \  \alp < \gamma \le \alpone, \\[1mm]
\Landauno{h^{\gamma-1 +\alp} + \mfrac{\delta}{\halp}} & \textup{if} \  2-\alp < \gamma \le 2. 
\end{array}
\right.
\end{align*}

\item 
In the case $ \alp \ge 1/2, \, 1-\alpha < \gamma \le 2 $ we have
\begin{align*}
\max_{\mj=\myseqq{1}{2}{\nmax}} 
\modul{\undeltamod[\mj-1/2] - u\klasm{\varxnb} } 
= \Landauno{h^{\gamma-1+\alp} + \mfrac{\delta}{\halp}} \as \hdeltatonull.
\end{align*}
\end{myenumerate}
\label{th:main-midpoint-mod}
\end{theorem}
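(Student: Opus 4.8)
The plan is to reduce the assertion to Theorem~\ref{th:main-midpoint}, exploiting that the correction weights render the modified scheme exact on polynomials of degree $\le 1$, \cf\refeq{startweights_ansatz}: after subtracting from $u$ its first-order Taylor polynomial at $0$, the remaining integrand vanishes to order $\gamma$ at the origin, and that is exactly the hypothesis under which Theorem~\ref{th:main-midpoint}(b) becomes applicable even though $u(0)$ and $\prim{u}(0)$ need not vanish.

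Write $\varphi_n(y) := k(\xn,y)\,u(y)$, so that $\Ialp{\varphi_n}{\xn} = f(\xn)$, and put $e_j := \undeltabmod[j] - u(\varxnb)$. First I would dispose of the two starting values: Proposition~\ref{th:midpoint-mod-starting-values} gives $\max_{j=1,2}\modul{e_j} = \Landauno{h^{\gamma}+\mfrac{\delta}{\halp}}$, and since $0 < \calp \le \tfrac12$ and $\alp < 1$ one has $\gamma \ge \gamma-\calp$ and $\gamma \ge \gamma-1+\alp$, whence $h^{\gamma} = \Landauno{h^{\gamma-\calp}}$ and $h^\gamma = \Landauno{h^{\gamma-1+\alp}}$; so the errors at $j=1,2$ already lie within the claimed bound. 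For $n \ge 3$ I would subtract \refeq{midpoint-noise-rule-mod} from $\mymetmod{h}{\varphi_n}{\xn} = f(\xn) - \ennmod{\varphi_n}{\xn}$ and move the two terms with $j \in \{1,2\}$ (carrying the weights $\an[n-j]+\w{n}{j}$) to the right-hand side; using the boundedness of the $\an[s]$, the boundedness \refeq{wnj-stable} of the correction weights, and the starting-value bound, this leaves
\[
\halp \sum_{j=3}^{n} \an[n-j]\, k(\xn,\varxjb)\, e_j \;=\; r_n, \qquad
\modul{r_n} \;\le\; C\bigl(\delta + \modul{\ennmod{\varphi_n}{\xn}} + h^{\gamma+\alp}\bigr), \qquad n \ge 3.
\]
Up to the harmless $\Landauno{\delta}$ and $\Landauno{h^{\gamma+\alp}}$ perturbations of the right-hand side, this is precisely the error recursion analysed in the proof of Theorem~\ref{th:main-midpoint} (Section~\ref{appendix_b}), whose inverse stability rests on Lemma~\ref{th:omeganinv-props}.

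It remains to estimate $\ennmod{\varphi_n}{\xn}$, and here Lemma~\ref{th:midpoint-error-mod} does the decisive work: uniformly in $n$, $\ennmod{\varphi_n}{\xn} = \enn{\varphi_n}{\xn} + \Landauno{h^{\gamma+\alp}}$ for $0 < \gamma \le 1$, and $\ennmod{\varphi_n}{\xn} = \enn{\widetilde{\varphi}_n}{\xn} + \Landauno{h^{\gamma+\alp}}$ for $1 < \gamma \le 2$, with $\widetilde{\varphi}_n(y) := \varphi_n(y) - \varphi_n(0) - \varphi_n'(0)\,y$. By Assumption~\ref{th:midpoint-assump} the function $\widetilde{\varphi}_n$ belongs to a fixed Hölder class of order $\gamma$ uniformly in $n$, and $\widetilde{\varphi}_n(y) = \Landauno{y^{\gamma}}$ as $y \to 0$; in particular it vanishes to first order at $0$. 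Hence $\enn{\varphi_n}{\xn}$ (resp.\ $\enn{\widetilde{\varphi}_n}{\xn}$) is covered verbatim by the quadrature-error estimates used in the proof of Theorem~\ref{th:main-midpoint}: part~(a) when $\calp < \gamma \le 1+\calp$ (no vanishing at $0$ required) and part~(b) when $2-\alp < \gamma \le 2$ (the only place where that proof invokes $\varphi_n(0)=\varphi_n'(0)=0$, here supplied by $\widetilde{\varphi}_n$). Inserting this into the recursion and running the inverse-stability argument of Section~\ref{appendix_b}, the genuine quadrature-error part contributes $\Landauno{h^{\gamma-\calp}}$ in the part-(a) range and $\Landauno{h^{\gamma-1+\alp}}$ in the part-(b) range, the $\Landauno{\delta}$ part contributes $\Landauno{\mfrac{\delta}{\halp}}$, and the $\Landauno{h^{\gamma+\alp}}$ part contributes at most $\Landauno{h^{\gamma}}$, which is dominated by both $h^{\gamma-\calp}$ and $h^{\gamma-1+\alp}$. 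Together with the bound on $e_1,e_2$ this yields the theorem, after a routine check of the $\gamma$-ranges: for $\alp \le \tfrac12$ one has $\alp+1 = 1+\calp$, so $\alp < \gamma \le \alp+1$ falls under part~(a) and $2-\alp < \gamma \le 2$ under part~(b), the interval $\alp+1 < \gamma \le 2-\alp$ being genuinely excluded; for $\alp \ge \tfrac12$ one has $2-\alp = 1+\calp$, so $1-\alp < \gamma \le 2-\alp$ falls under part~(a) and $2-\alp < \gamma \le 2$ under part~(b), covering all of $1-\alp < \gamma \le 2$.

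The main obstacle — really the only point that needs genuine care — is to check that the quadrature-error bounds (built on Lemma~\ref{th:midpoint-error}) and the resolvent/inverse-stability estimates from the proof of Theorem~\ref{th:main-midpoint} transfer without modification when the product integrand $\varphi_n = k(\xn,\cdot)\,u(\cdot)$ is replaced by $\widetilde{\varphi}_n$; this should hold, since the former use only that $\widetilde{\varphi}_n$ lies in a fixed Hölder class uniformly in $n$, while the latter use only the unchanged factor $k(\xn,\varxjb)$ multiplying $e_j$ together with Lemma~\ref{th:omeganinv-props}. All genuinely new content is carried by Lemma~\ref{th:midpoint-error-mod}, which converts ``modified scheme, arbitrary $u \in \Hsp{\gamma}{\xmax}$'' into ``unmodified scheme, integrand vanishing to order $\gamma$ at $0$''; with that substitution in hand, the remaining steps are exactly the machinery of Section~\ref{appendix_b} and Proposition~\ref{th:midpoint-mod-starting-values}.
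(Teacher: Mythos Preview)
Your proposal is correct and follows essentially the same route as the paper's proof: reduce to Theorem~\ref{th:main-midpoint} by rewriting the modified quadrature error via Lemma~\ref{th:midpoint-error-mod} as $\enn{\widetilde{\varphi}_n}{\xn}+\Landauno{h^{\gamma+\alp}}$, and absorb the starting-value contribution via Proposition~\ref{th:midpoint-mod-starting-values} and \refeq{wnj-stable}. The only cosmetic difference is that the paper keeps the full convolution sum $\sum_{j=1}^{n}\omega_{n-j}k(\xn,\varxjb)e_j$ on the left and moves only the correction-weight terms $\halp\,w_{nj}\,k(\xn,\varxjb)\,e_j$ to the right, so that the resulting error equation coincides verbatim with \refeq{main-midpoint-a} and no re-indexing of the truncated sum $\sum_{j=3}^{n}$ is needed.
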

\begin{proof} 
Let $ \enndeltamod = \undeltamod[j-1/2] - u\klasm{x_{j-1/2}} $ for $ j = 1,2, \ldots, \N $.
From \refeq{midpoint-mod}, \refeq{wnj-stable}, \refeq{midpoint-rule-error-def-mod},
Lemma \ref{th:midpoint-error-mod} and Proposition \ref{th:midpoint-mod-starting-values}
we obtain the representation
\begin{align*}
\halp \mysum{j=1}{n} \omega_{n-j} k\klasm{\xn,\xn[j-1/2]}
\enndeltamod[j-1/2]
& =
\ennmod{\myfun_n}{\xn}
+ f(\xn) - \fndelta
- \halp \mysum{j=1}{2} \w{n}{j} k\klasm{\xn,\xn[j-1/2]} \enndeltamod[j-1/2]
\\
&= 
\ennmod{\myfunn}{\xn}
+ \Landauno{h^{\gamalp} + \delta} 
= \enn{\myfunntil}{\xn}
+ \Landauno{h^{\gamalp} + \delta} 
\end{align*}
as $ \hdeltatonull $, uniformly for $ n = \myseqq{1}{2}{\nmax} $,
where $ \myfunntil = \myfunn $, if $ \gamma \le 1 $, and 
$ \myfunntil(y) = \myfunn(y) - \myfunn(0) - \prim{\myfun}_n(0)y $ for $ \gamma > 1 $.
The theorem now follows by performing the same steps as in the proof of 
Theorem~\ref{th:main-midpoint}.
\end{proof}

\bn
As an immediate consequence of Theorem \ref{th:main-midpoint-mod},
we can derive regularizing properties of the modified scheme.
\begin{corollary}
Let both Assumption \ref{th:midpoint-assump} and Assumption \ref{th:kernel-square-cont} be satisfied.
\begin{mylist_indent}
\item
If $ \alpha \le 1/2 $ and $ \alpha < \gamma \le \alpone $, then
choose $ h = h(\delta) \sim \delta^{1/\gamma}$. The resulting error estimate is
\begin{align*}
\max_{\mj=1,2, \ldots, \nmax} \modul{\undeltabmod \minus u\klasm{\varxnb}} 
= \Landauno{\delta^{1-\lfrac{\alp}{\gamma}}}
\as \delta \to 0.
\end{align*}

\item
Let one the following two conditions be satisfied:
(a) $ \alpha \ge 1/2, \, 1- \alpha < \gamma \le 2-\alp $, or (b)
$ 2-\alp < \gamma \le 2 $.
For $ h = h(\delta) \sim \delta^{1/(\gamma-1+2 \alp)} $ we then have
\begin{align*}
\max_{\mj=1,2, \ldots, \nmax} \modul{\undeltabmod \minus u\klasm{\varxnb}} 
= \Landauno{\delta^{1- \tfrac{\alp}{\noklafn{\gamma-1+2\alp}}}}
\as \delta \to 0.
\end{align*}
\end{mylist_indent}
\label{th:noise-corollary-mod}
\end{corollary}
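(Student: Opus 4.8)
The plan is to derive Corollary~\ref{th:noise-corollary-mod} directly from Theorem~\ref{th:main-midpoint-mod} by balancing the two error contributions $h^{\gamma-\calp}$ (respectively $h^{\gamma-1+\alp}$) and $\delta/h^\alp$ through the indicated choice of $h=h(\delta)$. First I would treat part~(i): here $\alp\le 1/2$ so $\calp=\alp$, and Theorem~\ref{th:main-midpoint-mod}(i) gives the bound $\Landauno{h^{\gamma-\alp}+\delta/h^\alp}$ for $\alp<\gamma\le\alpone$. Inserting $h\sim\delta^{1/\gamma}$ yields $h^{\gamma-\alp}\sim\delta^{(\gamma-\alp)/\gamma}=\delta^{1-\alp/\gamma}$ and $\delta/h^\alp\sim\delta\cdot\delta^{-\alp/\gamma}=\delta^{1-\alp/\gamma}$, so both terms are of the same order $\delta^{1-\alp/\gamma}$, which is exactly the claimed rate. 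One only needs to note that $h\sim\delta^{1/\gamma}$ with the two-sided comparison constants from the definition of $\sim$ guarantees that the implied $\Landauno{\cdot}$ constants transfer, and that $h\to 0$ as $\delta\to 0$ so that the hypotheses of Theorem~\ref{th:main-midpoint-mod} (valid for sufficiently small $h$) are eventually met.

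Next I would handle part~(ii). In case~(a) we have $\alp\ge 1/2$ and $1-\alp<\gamma\le 2-\alp$; in case~(b) we have $2-\alp<\gamma\le 2$; in both cases Theorem~\ref{th:main-midpoint-mod} supplies the bound $\Landauno{h^{\gamma-1+\alp}+\delta/h^\alp}$. (For case~(b) with $\alp\le 1/2$ this is the second alternative in Theorem~\ref{th:main-midpoint-mod}(i); for case~(b) with $\alp\ge 1/2$ and for case~(a) it is Theorem~\ref{th:main-midpoint-mod}(ii).) Choosing $h\sim\delta^{1/(\gamma-1+2\alp)}$ gives $h^{\gamma-1+\alp}\sim\delta^{(\gamma-1+\alp)/(\gamma-1+2\alp)}=\delta^{1-\alp/(\gamma-1+2\alp)}$ and $\delta/h^\alp\sim\delta^{1-\alp/(\gamma-1+2\alp)}$ as well, so again the two contributions balance and produce the stated rate $\delta^{1-\alp/(\gamma-1+2\alp)}$. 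One should check that the exponent $\gamma-1+2\alp$ is positive on the relevant ranges — in case~(a) it is $\ge\gamma-1+1=\gamma>0$, and in case~(b) it is $>2-\alp-1+2\alp=1+\alp>0$ — so that $h\sim\delta^{1/(\gamma-1+2\alp)}\to 0$ as $\delta\to 0$, as required.

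There is essentially no hard step here: the corollary is a routine a~priori parameter-choice computation, the kind of elementary optimization over $h$ that is standard in regularization theory. The only points demanding a modicum of care are bookkeeping ones: verifying that $\gamma$ stays within the admissible range of Theorem~\ref{th:main-midpoint-mod} in each subcase (in particular noticing that case~(b) for $\alp\le 1/2$ uses the second branch of part~(i) of that theorem, while for $\alp\ge 1/2$ it falls under part~(ii)), confirming the positivity of the exponents so that the prescribed $h(\delta)$ indeed tends to zero, and observing that the definition of $h\sim\delta^\beta$ with two-sided constants is precisely what is needed to push the $\Landauno{\cdot}$ estimates of the theorem through the substitution without loss. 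I would therefore write the proof compactly, stating the substitution and the resulting exponent arithmetic for each of the two items and remarking that both error terms are of equal order under the stated choice of $h$.

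\begin{proof}
Both statements follow from Theorem~\ref{th:main-midpoint-mod} by balancing the two error contributions.

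\emph{Item (i).} Since $ \alp \le 1/2 $, Theorem~\ref{th:main-midpoint-mod} gives, for $ \alp < \gamma \le \alpone $, the bound
$ \max_{\mj} \modul{\undeltabmod - u\klasm{\varxnb}} = \Landauno{h^{\gamma-\alp} + \delta/\halp} $ as $ \hdeltatonull $. With $ h = h(\delta) \sim \delta^{1/\gamma} $ we have $ h \to 0 $ as $ \delta \to 0 $, and
\begin{align*}
h^{\gamma-\alp} \sim \delta^{(\gamma-\alp)/\gamma} = \delta^{1-\alp/\gamma},
\qquad
\mfrac{\delta}{\halp} \sim \delta\cdot\delta^{-\alp/\gamma} = \delta^{1-\alp/\gamma},
\end{align*}
so both terms are of order $ \delta^{1-\alp/\gamma} $, which yields the assertion.

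\emph{Item (ii).} In case (a) we have $ \alp \ge 1/2 $ and $ 1-\alp < \gamma \le 2-\alp $, so Theorem~\ref{th:main-midpoint-mod}(ii) applies; in case (b) we have $ 2-\alp < \gamma \le 2 $, and the bound $ \Landauno{h^{\gamma-1+\alp} + \delta/\halp} $ is provided by the second alternative of Theorem~\ref{th:main-midpoint-mod}(i) if $ \alp \le 1/2 $ and by Theorem~\ref{th:main-midpoint-mod}(ii) if $ \alp \ge 1/2 $. In either case the exponent $ \gamma - 1 + 2\alp $ is positive: in case (a) it equals $ \gamma - 1 + 2\alp \ge \gamma > 0 $, and in case (b) it equals $ \gamma - 1 + 2\alp > 1 + \alp > 0 $. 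Hence $ h = h(\delta) \sim \delta^{1/(\gamma-1+2\alp)} \to 0 $ as $ \delta \to 0 $, and
\begin{align*}
h^{\gamma-1+\alp} \sim \delta^{(\gamma-1+\alp)/(\gamma-1+2\alp)} = \delta^{1-\alp/(\gamma-1+2\alp)},
\qquad
\mfrac{\delta}{\halp} \sim \delta^{1-\alp/(\gamma-1+2\alp)},
\end{align*}
so both terms are of order $ \delta^{1-\alp/(\gamma-1+2\alp)} $, which is the claimed estimate.
\end{proof}
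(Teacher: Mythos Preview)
Your proposal is correct and takes exactly the approach the paper has in mind: the paper gives no explicit proof of Corollary~\ref{th:noise-corollary-mod} but states it as an immediate consequence of Theorem~\ref{th:main-midpoint-mod}, and your balancing of $h^{\gamma-\alp}$ (respectively $h^{\gamma-1+\alp}$) against $\delta/\halp$ via the indicated choice of $h(\delta)$ is precisely that immediate consequence spelled out. Your case distinctions and positivity checks are accurate.
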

\section{Numerical experiments}
\label{num_exps}
We next present results of some numerical experiments with the linear \weaklysingular Volterra integral equation 
of the first kind \refeq{weaksing-inteq}. The following example is considered
(for different values of $ 0 < \alpha < 1 $ and $ 0 < \myq \le 2 $):
\begin{align}
k(x,y) = \mfrac{1+xy}{1+x^2}, \qquad
f(x) = \mfrac{1}{\Gamma(q+2+\alp)}\mfrac{x^{\myq+\alp}}{1+x^2} \kla{\myq + 1+\alp + (\myq+1)x^2 }
\for \intervalarg{x,y}{0}{1},
\label{eq:numeric-a}
\end{align}
with exact solution
\kla{\cf\refeq{ialp_monom}}
\begin{align}
u\klasm{y} \eq \tfrac{1}{\Gamma(q+1)} y^\myq
\for \intervalarg{y}{0}{1},
\label{eq:numeric-b}
\end{align}
so that the conditions in \ref{item:assump-u}--\ref{item:assump-k-smooth}
of Assumption \ref{th:midpoint-assump} are satisfied with $ \gamma = q $.
We present experiments for different values of $ \alpha $ and $ q $, sometimes with corrections weights, sometimes without, in order to cover all variants in Corollaries \ref{th:noise-corollary} and \ref{th:noise-corollary-mod}.
Here are additional remarks on the numerical tests.
\begin{mylist_indent}
\item
Numerical experiments with \stepsizes
$ h = 1/2^m $ for $ m = \myseqq{5}{6}{11} $
are employed, \resp.

\item
For each considered \stepsize $ h $, we consider the noise level
$ \delta = \delta(h) = c h^{p + \alp} $, where $ c = 0.3 $, and
$ \Landauno{h^p} $ is the rate for exact data, supplied by 
Theorems \ref{th:main-midpoint} and \ref{th:main-midpoint-mod},
with $ p =  p(\alpha,\myq) $.
The expected error is then of the form $  \max_{n} \modul{\undelta - u\klasm{\xn}} = \Landauno{h^p} =  \Landauno{\delta^{p/(p+\alpha)}} $ as $ h \to 0 $.

\item
In the numerical experiments, the perturbations are of the 
form $ \fndelta = \fxn + \Delta_n $
with uniformly distributed random values $ \Delta_n $ with
$ \modul{\Delta_n} \le \delta $.

\item In all tables, $ \maxnorm{f} $ denotes the maximum norm of the function $ f $.

\item
Experiments are employed using the program system \octave \kla{http://www.octave.org}.
\end{mylist_indent}
\begin{example}
\label{th:example1}
We first consider the situation \refeq{numeric-a}--\refeq{numeric-b},
with $ \alp = \tfrac{1}{2} $ and $ \myq = 2 $. The conditions in \ref{item:assump-u}--\ref{item:assump-k-smooth} of Assumption \ref{th:midpoint-assump} are satisfied with $ \gamma = 2 $ (also for any $ \gamma > 2 $ in fact, but then we have saturation).
We have $ u(0) = \prim{u}(0) = 0 $, so correction weights are not required here. The expected error estimate, with the choice of $ \delta = \delta(h) $ considered in the beginning of this section, is 
$ \max_{n} \modul{\undelta - u\klasm{\xn}} = \Landauno{\delta^{3/4}} =  \Landauno{h^{3/2}} $. The numerical results are shown in Table \ref{tab:num1}.
\begin{table}
\hfill
\begin{tabular}{|| r | c |@{\hspace{5mm} } l | c | c ||} 
\hline
\hline
$ N $  
& $ \delta $
& $ 100 \myast \delta/\maxnorm{f} $
& $ \ \max_{n} \modul{\undelta - u\klasm{\xn}} \ $
& $ \ \max_{n} \modul{\undelta - u\klasm{\xn}} \ / \delta^{3/4} \ $
\\ \hline \hline
 $  32$ & $2.9 \myast 10^{-4}$ & $9.74 \myast 10^{-2}$ & $2.84 \myast 10^{-3}$ & $1.27$   \\
 $  64$ & $7.3 \myast 10^{-5}$ & $2.43 \myast 10^{-2}$ & $1.12 \myast 10^{-3}$ & $1.41$   \\
 $ 128$ & $1.8 \myast 10^{-5}$ & $6.09 \myast 10^{-3}$ & $3.77 \myast 10^{-4}$ & $1.35$   \\
 $ 256$ & $4.6 \myast 10^{-6}$ & $1.52 \myast 10^{-3}$ & $1.37 \myast 10^{-4}$ & $1.38$   \\
 $ 512$ & $1.1 \myast 10^{-6}$ & $3.80 \myast 10^{-4}$ & $5.20 \myast 10^{-5}$ & $1.48$   \\
 $1024$ & $2.9 \myast 10^{-7}$ & $9.51 \myast 10^{-5}$ & $1.89 \myast 10^{-5}$ & $1.53$   \\
 $2048$ & $7.2 \myast 10^{-8}$ & $2.38 \myast 10^{-5}$ & $6.55 \myast 10^{-6}$ & $1.50$   \\
\hline
\hline
\end{tabular}
\hfill 
\caption{Numerical results for Example \ref{th:example1}}
\label{tab:num1}
\end{table} 
\end{example}
\begin{example}
\label{th:example2}
We next consider the situation \refeq{numeric-a}--\refeq{numeric-b},
with $ \alp = 0.9 $ and $ \myq = 0.4 $. The conditions in \ref{item:assump-u}--\ref{item:assump-k-smooth} of Assumption \ref{th:midpoint-assump} are satisfied with $ \gamma = 0.4 $. Since $ \gamma \le 1 $, correction weights are not needed here. The expected error estimate, with $ \delta = \delta(h) $ as in the beginning of this section, is 
$ \max_{n} \modul{\undelta - u\klasm{\xn}} = \Landauno{\delta^{1/4}} = \Landauno{h^{0.3}} $.
The numerical results are shown in Table \ref{tab:num2}.
\begin{table}
\hfill
\begin{tabular}{|| r | c |@{\hspace{5mm} } l | c | c ||} 
\hline
\hline
$ N $  
& $ \delta $
& $ 100 \myast \delta/\maxnorm{f} $
& $ \ \max_{n} \modul{\undelta - u\klasm{\xn}} \ $
& $ \ \max_{n} \modul{\undelta - u\klasm{\xn}} \ / \delta^{1/4} \ $
\\ \hline \hline
 $  32$ & $4.7 \myast 10^{-3}$ & $6.80 \myast 10^{-1}$ & $1.88 \myast 10^{-1}$ & $0.72$   \\
 $  64$ & $2.0 \myast 10^{-3}$ & $2.96 \myast 10^{-1}$ & $1.32 \myast 10^{-1}$ & $0.62$   \\
 $ 128$ & $8.9 \myast 10^{-4}$ & $1.29 \myast 10^{-1}$ & $1.23 \myast 10^{-1}$ & $0.71$   \\
 $ 256$ & $3.9 \myast 10^{-4}$ & $5.61 \myast 10^{-2}$ & $9.61 \myast 10^{-2}$ & $0.69$   \\
 $ 512$ & $1.7 \myast 10^{-4}$ & $2.44 \myast 10^{-2}$ & $8.12 \myast 10^{-2}$ & $0.71$   \\
 $1024$ & $7.3 \myast 10^{-5}$ & $1.06 \myast 10^{-2}$ & $6.77 \myast 10^{-2}$ & $0.73$   \\
 $2048$ & $3.2 \myast 10^{-5}$ & $4.62 \myast 10^{-3}$ & $5.43 \myast 10^{-2}$ & $0.72$   \\
\hline
\hline
\end{tabular}
\hfill 
\caption{Numerical results for Example \ref{th:example2}}
\label{tab:num2}
\end{table} 
\end{example}
\begin{example}
\label{th:example3}
We next consider the situation \refeq{numeric-a}--\refeq{numeric-b},
with $ \alp = 0.2 $ and $ \myq = 0.5 $.
The conditions in \ref{item:assump-u}--\ref{item:assump-k-smooth} of Assumption \ref{th:midpoint-assump} are satisfied with $ \gamma = 0.5 $ then, 
and the expected error estimate
is
$ \max_{n} \modul{\undelta - u\klasm{\xn}} = \Landauno{\delta^{0.6}} = \Landauno{h^{0.3}} $.
The numerical results are shown in Table \ref{tab:num3}.
\begin{table}
\hfill
\begin{tabular}{|| r | c |@{\hspace{5mm} } l | c | c ||} 
\hline
\hline
$ N $  
& $ \delta $
& $ 100 \myast \delta/\maxnorm{f} $
& $ \ \max_{n} \modul{\undelta - u\klasm{\xn}} \ $
& $ \ \max_{n} \modul{\undelta - u\klasm{\xn}} \ / \delta^{0.6} \ $
\\ \hline \hline
$  32$ & $5.3 \myast 10^{-2}$ & $5.12 \myast 10^{0}$ & $1.18 \myast 10^{-1}$ & $0.69$ \\
 $  64$ & $3.8 \myast 10^{-2}$ & $3.62 \myast 10^{0}$ & $8.52 \myast 10^{-2}$ & $0.61$ \\
 $ 128$ & $2.7 \myast 10^{-2}$ & $2.56 \myast 10^{0}$ & $7.78 \myast 10^{-2}$ & $0.69$ \\
 $ 256$ & $1.9 \myast 10^{-2}$ & $1.81 \myast 10^{0}$ & $5.89 \myast 10^{-2}$ & $0.64$ \\
 $ 512$ & $1.3 \myast 10^{-2}$ & $1.28 \myast 10^{0}$ & $5.19 \myast 10^{-2}$ & $0.69$ \\
 $1024$ & $9.4 \myast 10^{-3}$ & $9.05 \myast 10^{-1}$ & $4.20 \myast 10^{-2}$ & $0.69$   \\
 $2048$ & $6.6 \myast 10^{-3}$ & $6.40 \myast 10^{-1}$ & $3.33 \myast 10^{-2}$ & $0.68$   \\
\hline
\hline
\end{tabular}
\hfill 
\caption{Numerical results for Example \ref{th:example3}}
\label{tab:num3}
\end{table} 
\end{example}
\begin{example}
\label{th:example4}
We finally consider the situation \refeq{numeric-a}--\refeq{numeric-b},
with $ \alp = 0.5 $ and $ \myq = 1 $.
Then the conditions in \ref{item:assump-u}--\ref{item:assump-k-smooth} of Assumption \ref{th:midpoint-assump} are satisfied with any $ \gamma > 0 $, and initial conditions are not satisfied in this case.
The presented theory for the \repmidrule without correction weights suggests
that we have $ \max_{n} \modul{\undelta - u\klasm{\xn}} = \Landauno{\delta^{2/3}} = \Landauno{h} $.
The corresponding numerical results are shown in Table \ref{tab:num4}.
\begin{table}
\hfill
\begin{tabular}{|| r | c |@{\hspace{5mm} } l | c | c ||} 
\hline
\hline
$ N $  
& $ \delta $
& $ 100 \myast \delta/\maxnorm{f} $
& $ \ \max_{n} \modul{\undelta - u\klasm{\xn}} \ $
& $ \ \max_{n} \modul{\undelta - u\klasm{\xn}} \ / \delta^{2/3} \ $
\\ \hline \hline
 $  32$ & $1.7 \myast 10^{-3}$ & $2.20 \myast 10^{-1}$ & $1.26 \myast 10^{-2}$ & $0.90$   \\
 $  64$ & $5.9 \myast 10^{-4}$ & $7.79 \myast 10^{-2}$ & $6.47 \myast 10^{-3}$ & $0.92$   \\
 $ 128$ & $2.1 \myast 10^{-4}$ & $2.75 \myast 10^{-2}$ & $3.27 \myast 10^{-3}$ & $0.94$   \\
 $ 256$ & $7.3 \myast 10^{-5}$ & $9.74 \myast 10^{-3}$ & $1.57 \myast 10^{-3}$ & $0.89$   \\
 $ 512$ & $2.6 \myast 10^{-5}$ & $3.44 \myast 10^{-3}$ & $7.72 \myast 10^{-4}$ & $0.88$   \\
 $1024$ & $9.2 \myast 10^{-6}$ & $1.22 \myast 10^{-3}$ & $3.95 \myast 10^{-4}$ & $0.90$   \\
 $2048$ & $3.2 \myast 10^{-6}$ & $4.30 \myast 10^{-4}$ & $2.06 \myast 10^{-4}$ & $0.94$   \\
\hline
\hline
\end{tabular}
\hfill 
\caption{Numerical results for Example \ref{th:example4}, without correction weights}
\label{tab:num4}
\end{table} 

We also consider the \modrepmidrule for the same problem, \ie correction weights are used this time. The presented theory then yields
$ \max_{n} \modul{\undelta - u\klasm{\xn}} = \Landauno{\delta^{3/4}} = \Landauno{h^{3/2}} $.
The related numerical results are shown in Table \ref{tab:num5}.
\begin{table}
\hfill
\begin{tabular}{|| r | c |@{\hspace{5mm} } l | c | c ||} 
\hline
\hline
$ N $  
& $ \delta $
& $ 100 \myast \delta/\maxnorm{f} $
& $ \ \max_{n} \modul{\undelta - u\klasm{\xn}} \ $
& $ \ \max_{n} \modul{\undelta - u\klasm{\xn}} \ / \delta^{3/4} \ $
\\ \hline \hline
 $  32$ & $2.9 \myast 10^{-4}$ & $3.89 \myast 10^{-2}$ & $2.10 \myast 10^{-3}$ & $0.94$   \\
 $  64$ & $7.3 \myast 10^{-5}$ & $9.74 \myast 10^{-3}$ & $6.56 \myast 10^{-4}$ & $0.83$   \\
 $ 128$ & $1.8 \myast 10^{-5}$ & $2.43 \myast 10^{-3}$ & $2.88 \myast 10^{-4}$ & $1.03$   \\
 $ 256$ & $4.6 \myast 10^{-6}$ & $6.09 \myast 10^{-4}$ & $8.66 \myast 10^{-5}$ & $0.87$   \\
 $ 512$ & $1.1 \myast 10^{-6}$ & $1.52 \myast 10^{-4}$ & $3.46 \myast 10^{-5}$ & $0.99$   \\
 $1024$ & $2.9 \myast 10^{-7}$ & $3.80 \myast 10^{-5}$ & $1.22 \myast 10^{-5}$ & $0.99$   \\
 $2048$ & $7.2 \myast 10^{-8}$ & $9.51 \myast 10^{-6}$ & $4.31 \myast 10^{-6}$ & $0.99$   \\
\hline
\hline
\end{tabular}
\hfill 
\caption{Numerical results for Example \ref{th:example4}, with correction weights}
\label{tab:num5}
\end{table} 
\\
The last column in each table shows that the theory is confirmed in each of the five numerical experiments.
\end{example}
\section{Appendix A: Proof of Lemma \ref{th:omeganinv-props}}
\label{abel-stability}
We now present a proof of \refeq{omeganinv-decay} for the coefficients of the inverse of
the considered generating \powser $ \sum_{\n=0}^{\infty} a_\n \myxi^\n $ which differs from that given by
Eggermont \mynocitea{Eggermont}{81}. Our proof uses Banach algebra theory and may be of independent interest.
\subsection{Special sequence spaces and Banach algebra theory}
We start with the consideration of some sequence spaces in a Banach
algebra framework. For an introduction to Banach algebra theory see, \eg
\mycitea{Rudin}{91}. The following results can be found in Rogozin~\cite{Rogozin[73], Rogozin[76]},
and for completeness they are recalled here.

For a sequence of positive real weights 
$ (\mytau_n)_{n \ge 0} $, consider the following norms, 
\begin{align*}  
\normlinfomeg{a} = \sup_{m \ge 0} \modul{a_m} \mytau_m 
+ \mysum{n=0}{\infty} \modul{a_n},
\qquad
\quad
\normlone{a} = \mysum{n=0}{\infty} \modul{a_n},
\qquad a = (a_n)_{n \ge 0} \subset \koza,
\end{align*}  
and the spaces
\begin{align*}  
\lone & = \inset{a = (a_n)_{n \ge 0} \subset \koza \mid \normlone{a} <  \infty }, 
\qquad \linfomeg = \inset{a = (a_n)_{n \ge 0} \subset \koza \mid \normlinfomeg{a} <  \infty },
\\
\czeromeg & = \inset{a \in \linfomeg \mid a_n \mytau_n  \to 0 \assh n \to \infty}.
\end{align*}  
We obviously have $ \czeromeg \subset \linfomeg \subset \lone $.
By using the canonical identification
$ a(\myt) = \mysumtxt{n=0}{\infty} a_n \myt^n $,
the spaces $ \czeromeg, \linfomeg $ and  $ \lone $ can be considered as function algebras on 
\begin{align*}
\Dr = \inset{\xi \in \koza \mid \modul{\xi} \le 1 },
\end{align*}
the closed unit disc with center 0 and radius $ 1 $.
We are mainly interested in positive weights
$ (\mytau_n)_{n \ge 0} $ which satisfy
$ \mysumtxt{n=0}{\infty} \mytau_n^{-1} < \infty $. In that case,
$ \sup_{m \ge 0} \modul{a_m} \mytau_m $ for 
$ (a_n)_{n \ge 0} \in \linfomeg $ defines a norm on
$ \linfomeg $ which is equivalent to the given norm 
$ \normlinfomeg{\cdot} $. In particular, in the case $ \sigma_0 = 1 $ and
$ \sigma_n = n^\beta $ for $ n = 1,2, \ldots \ (\beta>1) $, 
then $ \linfomeg $ is the space of
sequences $ (a_n)_{n \ge 0} $ satisfying
$ a_n = \Landauno{n^{-\beta}} $ as $ n \to \infty $.
In the sequel we assume
that 
\begin{align} 
\mytau_n \le c \mytau_j, \quad \tfrac{n}{2} \le j \le n, \quad n \ge 0,
\label{eq:omegan_bound}
\end{align} 
holds for some finite constant $ c > 0 $.
We state without proof the following elementary result
(cf.~\mynocitea{Rudin}{91} for part (a) of the proposition, and 
\cite{Rogozin[73], Rogozin[76]} for parts (b) and (c)).
\begin{proposition}
Let
$ \mytau_0, \mytau_1, \ldots $ 
be positive weights satisfying \refeq{omegan_bound}.
\begin{myenumerate}
\item
The space $ \lone $, equipped with convolution
$ (a*b)_n = \mysumtxt{j=0}{n} a_{n-j} b_j, n \ge 0 $, for  $ a,b \in \lone $,
is a commutative complex Banach algebra, with unit $ e = (1,0,0,\ldots) $.
\item
The space $ \linfomeg $ is a subalgebra of $ \lone $, \ie it is closed \wrt
addition, scalar multiplication and convolution.
The norm $ \normlinfomeg{\cdot} $ is complete on $ \linfomeg $
and satisfies
\begin{align} 
\normlinfomeg{a*b} \le (2c+1) \normlinfomeg{a} \cdot \normlinfomeg{b},
\qquad
a, b \in \linfomeg,
\label{eq:normlinfomegr_convolut}
\end{align}  
where $ c $ is taken from estimate \refeq{omegan_bound}.
\item The statements of (b) are also valid for the space
$ \czeromeg $ (instead of $ \linfomeg $), supplied with the norm $ \normlinfomeg{\cdot} $.
\end{myenumerate}
\label{th:linf_czer_algebra}
\end{proposition}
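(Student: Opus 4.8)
\emph{Sketch of a proof.} The plan is to dispatch the three parts in order; the only real work is the convolution estimate \refeq{normlinfomegr_convolut} in part~(b), everything else being standard Banach-space bookkeeping. For part~(a), I would first note that $\lone$ is a Banach space: a $\normlone{\cdot}$-Cauchy sequence converges coordinatewise (since $\modul{a_n}\le\normlone{a}$), and a routine truncation argument shows the coordinatewise limit lies in $\lone$ with convergence in norm. Next, convolution is well defined and contractive,
\[
\normlone{a*b}=\mysum{n=0}{\infty}\Bigl|\mysum{j=0}{n}a_{n-j}b_j\Bigr|\le\mysum{n=0}{\infty}\mysum{j=0}{n}\modul{a_{n-j}}\modul{b_j}=\normlone{a}\,\normlone{b},
\]
by absolute convergence and reindexing the double sum. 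Bilinearity, associativity and commutativity of $*$ hold because $*$ is nothing but the Cauchy product of the associated power series under the identification already used in the paper, and $e=(1,0,0,\ldots)$ is the unit since $(e*a)_n=a_n$.

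For part~(b), closedness of $\linfomeg$ under addition and scalar multiplication, and completeness of $\normlinfomeg{\cdot}$, follow exactly as for $\lone$ (a $\normlinfomeg{\cdot}$-Cauchy sequence again converges coordinatewise, and one passes to the limit in finitely many terms of both the $\sup$ summand and the $\ell^1$ summand). The substance is \refeq{normlinfomegr_convolut}, and I would bound the two summands of $\normlinfomeg{a*b}$ separately. The $\ell^1$ part is already handled: $\normlone{a*b}\le\normlone{a}\normlone{b}\le\normlinfomeg{a}\normlinfomeg{b}$. For the weighted $\sup$ part, write $(a*b)_m=\sum_{i=0}^m a_i b_{m-i}$ and split the sum at $i=m/2$: if $i\ge m/2$ then $m/2\le i\le m$ and \refeq{omegan_bound} gives $\mytau_m\le c\,\mytau_i$, while if $i<m/2$ then $m/2<m-i\le m$ and \refeq{omegan_bound} gives $\mytau_m\le c\,\mytau_{m-i}$. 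Hence
\[
\modul{(a*b)_m}\mytau_m\le c\sum_{i\ge m/2}\bigl(\modul{a_i}\mytau_i\bigr)\modul{b_{m-i}}+c\sum_{i<m/2}\modul{a_i}\bigl(\modul{b_{m-i}}\mytau_{m-i}\bigr)\le c\,\normlinfomeg{a}\normlone{b}+c\,\normlone{a}\normlinfomeg{b},
\]
and taking the supremum over $m$ and adding $\normlone{a*b}$ yields $\normlinfomeg{a*b}\le(2c+1)\normlinfomeg{a}\normlinfomeg{b}$.

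For part~(c), $\czeromeg$ is a closed linear subspace of $\linfomeg$ — if $a^{(k)}\to a$ in $\normlinfomeg{\cdot}$ with $a_n^{(k)}\mytau_n\to0$ for each $k$, splitting $\modul{a_m}\mytau_m\le\modul{a_m-a_m^{(k)}}\mytau_m+\modul{a_m^{(k)}}\mytau_m$ shows $a_m\mytau_m\to0$ — hence it is complete; it also contains the unit, since $(e_n\mytau_n)_{n\ge0}=(\mytau_0,0,0,\ldots)\to0$. Closedness under convolution is read off the same two-term bound: for $a,b\in\czeromeg$ the first sum is $\le c\bigl(\sup_{i\ge m/2}\modul{a_i}\mytau_i\bigr)\normlone{b}\to0$ and the second is $\le c\bigl(\sup_{k>m/2}\modul{b_k}\mytau_k\bigr)\normlone{a}\to0$ as $m\to\infty$, so $(a*b)_m\mytau_m\to0$. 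I expect the only genuinely delicate point to be arranging the index split in part~(b) so that hypothesis \refeq{omegan_bound} really does apply on each of the two blocks (in particular noting $m-i\ge m/2$ when $i<m/2$); the rest is routine, and the assumption $\sum_n\mytau_n^{-1}<\infty$ is not needed for this proposition.
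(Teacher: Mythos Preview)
Your argument is correct. The paper itself states this proposition without proof, merely citing Rudin for part~(a) and Rogozin for parts~(b) and~(c), so you have supplied precisely what the paper omits; in particular, your index split at $i=m/2$ so that \refeq{omegan_bound} applies on each block is the standard way to obtain the constant $2c+1$, and your verification that $\czeromeg$ is closed under convolution via the same split is clean.
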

The following proposition is based on the fact that the subalgebra generated by
$ a(\xi) = \xi = (0,1,0,0,\ldots) $ is dense in the space $ \lone $ and in $
\czeromeg $ as well, \ie both spaces are single-generated in fact.
\begin{proposition}[\mycitea{Rogozin}{73}]
Let
$ \mytau_0, \mytau_1, \ldots $
be positive weights satisfying \refeq{omegan_bound}. 
The spaces $ \lone $ and $ \czeromeg $ are inverse-closed, \ie
for each $ a \in \lone $ with $ a(\myt) \neq 0, \, \myt \in \Dr $, one has
 $ [a(\xi)]^{-1} \in \lone $, and 
for each $ a \in \czeromeg $ with $ a(\myt) \neq 0, \myt \in \Dr $, one has
$ [a(\xi)]^{-1} \in \czeromeg $.
\label{th:single_generator_sequence_spaces}
\end{proposition}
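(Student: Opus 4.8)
The plan is to obtain both assertions from the Gelfand theory of commutative unital Banach algebras, the crux being the identification of the maximal ideal spaces of $\lone$ and $\czeromeg$ with the closed disc $\Dr$.

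Recall first the standard criterion (see \mycitea{Rudin}{91}): in a commutative Banach algebra $\mathcal{A}$ with unit, an element $a$ is invertible if and only if its Gelfand transform $\widehat{a}$ vanishes nowhere on the maximal ideal space. By Proposition \ref{th:linf_czer_algebra}, both $\lone$ and $\czeromeg$, equipped with convolution and the unit $e=(1,0,0,\ldots)$, are commutative Banach algebras with unit. Hence it suffices to (i) determine their maximal ideal spaces, and (ii) check that under the resulting identification the Gelfand transform of $a=(a_n)_{n\ge 0}$ is exactly the function $\myt\mapsto a(\myt)=\sum_{n\ge 0}a_n\myt^n$ on $\Dr$.

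For step (i) I would use that both algebras are singly generated by $\xi=(0,1,0,0,\ldots)$: the finite linear combinations of the convolution powers $\xi^n$ are precisely the finitely supported sequences, and these are dense in $\lone$ and --- by the density hypothesis recorded just before the proposition --- also in $\czeromeg$. Consequently a character $\varphi$ is completely determined by the scalar $\varphi(\xi)$, which necessarily lies in the spectrum $\sigma(\xi)$; conversely each $z\in\Dr$ furnishes a character $\varphi_z(a)=a(z)$, well defined since $a\in\ell^1$ and $\modul{z}\le 1$, multiplicative because convolution corresponds to multiplication of the associated power series, and nonzero since $\varphi_z(e)=1$. Thus $z\mapsto\varphi_z$ is a bijection of $\Dr$ onto the maximal ideal space once one knows $\sigma_{\lone}(\xi)=\sigma_{\czeromeg}(\xi)=\Dr$ (surjectivity needs $\varphi(\xi)\in\sigma(\xi)$ together with the density just used, injectivity is clear from $\varphi_z(\xi)=z$). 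To establish this spectrum: the spectral radius equals $1$ in both cases, because $\normlone{\xi^n}=1$, while $\normlinfomeg{\xi^n}=\mytau_n+1$ and iterating \refeq{omegan_bound} gives $\mytau_n=\Landauno{n^{\log_2 c}}$, so $\normlinfomeg{\xi^n}^{1/n}\to 1$; and for $0<\modul{z}\le 1$ the element $\xi-ze$ is not invertible, because the only candidate inverse is the power series $-\sum_{n\ge0}z^{-(n+1)}\xi^n$, whose coefficients fail to be summable (a fortiori fail to lie in $\czeromeg$). Hence $\sigma(\xi)=\Dr$ in both algebras, the bijection $z\mapsto\varphi_z$ is a continuous bijection between compact Hausdorff spaces and therefore a homeomorphism, and step (ii) follows: $\widehat{a}(z)=\varphi_z(a)$ agrees with $a\mapsto a(z)$ on the dense polynomial subalgebra and both are continuous, so $\widehat{a}(z)=a(z)$ for all $z\in\Dr$.

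Combining the two ingredients finishes the proof: if $a\in\lone$ with $a(\myt)\neq 0$ for all $\myt\in\Dr$, then $\widehat{a}$ is zero-free on $\mathfrak{M}(\lone)=\Dr$, so $a$ is invertible in $\lone$, i.e.\ $[a(\xi)]^{-1}\in\lone$; the identical argument inside $\czeromeg$ yields $[a(\xi)]^{-1}\in\czeromeg$ whenever $a\in\czeromeg$ is zero-free on $\Dr$ (the inverse formed in the subalgebra $\czeromeg$ coincides with the one in $\lone$ by uniqueness of inverses). I expect the main obstacle to be the maximal-ideal-space computation, and within it the verification that $\sigma_{\czeromeg}(\xi)=\Dr$: this is precisely the point where the growth restriction \refeq{omegan_bound} on the weights $(\mytau_n)_{n\ge0}$ is genuinely used. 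Once that is in place, the rest is a routine application of Gelfand theory.
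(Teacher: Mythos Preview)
Your proposal is correct and follows exactly the approach the paper points to. The paper does not spell out a proof but cites Rogozin and records, just before the statement, that the key fact is that both $\lone$ and $\czeromeg$ are singly generated by $\xi$ (with the $\lone$ case being Wiener's theorem); your Gelfand-theoretic argument---identifying the maximal ideal space with $\Dr$ via the spectrum of the generator and using \refeq{omegan_bound} to force polynomial growth of $\mytau_n$ and hence spectral radius $1$---is precisely the standard way to carry this out.
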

For the $ \ell_1 $-case, this is Wiener's theorem, \cf, \eg \mycitea{Rudin}{91}.
The space $ \linfomeg $ is not single-generated but still inverse-closed which will be used in the following. The proof is taken
from \mycitea{Rogozin}{76} and is stated here for completeness.
\begin{proposition}[\mycitea{Rogozin}{76}]
For positive weights $ (\mytau_n)_{n \ge 0} $ satisfying \refeq{omegan_bound}, 
the space $ \linfomeg $ is inverse-closed, \ie
for each $ a \in \linfomeg $ with $ a(\myt) \neq 0 $ for $ \myt \in \Dr $ one has
$ [a(\xi)]^{-1} \in \linfomeg $.
\label{th:linfomeg_inverse-closed}
\end{proposition}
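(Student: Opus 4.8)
The plan is to deduce inverse-closedness of $ \linfomeg $ from the already-established inverse-closedness of the two single-generated algebras $ \lone $ and $ \czeromeg $ (Proposition~\ref{th:single_generator_sequence_spaces}), together with the algebra structure of $ \linfomeg $ from Proposition~\ref{th:linf_czer_algebra}(b). So suppose $ a \in \linfomeg $ with $ a(\myt) \neq 0 $ for all $ \myt \in \Dr $. By Wiener's theorem (the $ \lone $ part of Proposition~\ref{th:single_generator_sequence_spaces}) we already know $ b \defeq [a(\xi)]^{-1} \in \lone $; the entire task is to upgrade this to $ b \in \linfomeg $, i.e.\ to show $ b_n \mytau_n $ is bounded.

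First I would split off the constant term: write $ a(\xi) = a_0 e + \xi\, r(\xi) $ with $ a_0 \neq 0 $ (nonvanishing at $ \myt = 0 $ forces this), and note $ r \in \linfomeg $ as well, since deleting the leading coefficient and shifting only changes the weighted sup-norm by a bounded factor thanks to \refeq{omegan_bound}. The idea is to separate $ a $ into a ``smooth'' part lying in $ \czeromeg $ plus a finitely-supported (hence harmless) correction, invert each piece in the appropriate algebra, and recombine. Concretely: for a truncation index $ N $, let $ a^{(N)} $ agree with $ a $ in coordinates $ 0,1,\dots,N $ and vanish afterwards, and let $ a_{(N)} = a - a^{(N)} $ be the tail. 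The tail satisfies $ \normlone{a_{(N)}} \to 0 $ and, more importantly, $ (a_{(N)})_n \mytau_n \to 0 $ as $ n \to \infty $ for each fixed $ N $ and also $ \sup_n (a_{(N)})_n \mytau_n \to 0 $ as $ N \to \infty $ provided $ a \in \czeromeg $ — which is the point where one wants $ a $ itself in $ \czeromeg $. If $ a \in \czeromeg $ we are already done by Proposition~\ref{th:single_generator_sequence_spaces}, so the real content is handling $ a \in \linfomeg \setminus \czeromeg $.

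The mechanism I expect to use for that: pick $ N $ large enough that $ \normlone{a_{(N)}} $ is smaller than $ 1/\bignorm{[a(\xi)]^{-1}}_{\ell_1} $, so that $ a^{(N)}(\myt) = a(\myt) - a_{(N)}(\myt) $ is still nonvanishing on $ \Dr $ (Rouch\'e-type/Neumann-series argument in $ \lone $). Now $ a^{(N)} $ is finitely supported, hence lies in $ \czeromeg $ trivially, so $ [a^{(N)}(\xi)]^{-1} \in \czeromeg \subset \linfomeg $ by Proposition~\ref{th:single_generator_sequence_spaces}. Then write
\begin{align*}
[a(\xi)]^{-1} = [a^{(N)}(\xi)]^{-1} \cdot \bigl[\, e - [a^{(N)}(\xi)]^{-1} a_{(N)}(\xi) \,\bigr]^{-1},
\end{align*}
and expand the second factor as a Neumann series $ \sum_{k \ge 0} \bigl( [a^{(N)}(\xi)]^{-1} a_{(N)}(\xi) \bigr)^k $. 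The key observation is that $ [a^{(N)}(\xi)]^{-1} a_{(N)}(\xi) $ lies in $ \czeromeg $ — it is a convolution product one of whose factors, $ a_{(N)} $, is a tail of an $ \linfomeg $-sequence and so is in $ \czeromeg $, and $ \czeromeg $ is an ideal-like subalgebra closed under convolution by Proposition~\ref{th:linf_czer_algebra}(c); one checks that its $ \normlinfomeg{\cdot} $-norm can be made $ < 1/(2c+1) $ by enlarging $ N $, so the Neumann series converges in the complete normed algebra $ (\czeromeg, \normlinfomeg{\cdot}) $ using the submultiplicativity \refeq{normlinfomegr_convolut}. Hence the second factor is $ e $ plus an element of $ \czeromeg $, and multiplying by $ [a^{(N)}(\xi)]^{-1} \in \linfomeg $ keeps everything in $ \linfomeg $ by Proposition~\ref{th:linf_czer_algebra}(b). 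Therefore $ [a(\xi)]^{-1} \in \linfomeg $.

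The main obstacle is the step asserting that $ [a^{(N)}(\xi)]^{-1} a_{(N)}(\xi) $ has small $ \normlinfomeg{\cdot} $-norm for large $ N $. Smallness in $ \ell_1 $ is automatic from $ \normlone{a_{(N)}} \to 0 $, but to control the weighted sup part $ \sup_m |\cdot|_m \mytau_m $ one needs that the $ \czeromeg $-factor $ a_{(N)} $ not merely tends to zero coordinatewise after weighting, but does so uniformly in the right sense once convolved against the fixed $ \ell_1 $-sequence $ [a^{(N)}(\xi)]^{-1} $; this is exactly where Proposition~\ref{th:linf_czer_algebra}(c) (that $ \czeromeg $ is a convolution-closed subalgebra under $ \normlinfomeg{\cdot} $) and the bound \refeq{omegan_bound} on the weights do the work. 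Everything else is bookkeeping with geometric series. Finally, I would record the consequence for the specific weights $ \sigma_0 = 1, \sigma_n = n^\beta $ ($ \beta > 1 $) needed for Lemma~\ref{th:omeganinv-props}: since those weights satisfy \refeq{omegan_bound}, inverse-closedness of $ \linfomeg $ gives decay $ \aninv = \Landauno{n^{-\beta}} $ whenever the generating series $ \myomega(\myxi) $ lies in $ \linfomeg $ and is nonvanishing on $ \Dr $.
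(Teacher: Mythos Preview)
Your truncation-plus-Neumann-series argument has a genuine gap at exactly the step you yourself flag as the ``main obstacle.'' The assertion that the tail $a_{(N)}$ lies in $\czeromeg$ is false in general: membership in $\czeromeg$ requires $|a_m|\,\mytau_m \to 0$ as $m \to \infty$, but for $a \in \linfomeg \setminus \czeromeg$ the sequence $|a_m|\,\mytau_m$ is merely bounded, not null, and the same is true of its tail. Consequently the weighted-sup part of $\normlinfomeg{a_{(N)}}$ does \emph{not} tend to $0$ as $N \to \infty$ (only the $\ell_1$-part does), and you cannot force $\normlinfomeg{[a^{(N)}(\xi)]^{-1} a_{(N)}(\xi)}$ below the threshold $1/(2c+1)$ needed for the Neumann series to converge in $(\linfomeg,\normlinfomeg{\cdot})$. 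The difficulty is structural: $\linfomeg$ is not single-generated precisely because polynomials (finitely supported sequences) are not dense in it, so any scheme based on finite-truncation approximation is bound to fail for elements outside $\czeromeg$. (There is also a sign slip: $a = a^{(N)} + a_{(N)}$ gives $a^{-1} = [a^{(N)}]^{-1}\,[e + [a^{(N)}]^{-1} a_{(N)}]^{-1}$, not $e - \cdots$; but this is harmless compared to the main issue.)

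The paper, following Rogozin, uses a different idea that sidesteps this obstacle. One argues by contradiction: assume $b = a^{-1} \notin \linfomeg$, so $t_n \defeq \max_{0 \le m \le n} |b_m|\,\mytau_m \to \infty$. Introduce rescaled weights $\tilde\sigma_n = \mytau_n / t_n$; these still satisfy \refeq{omegan_bound}, so the associated space $c_0^{\tilde\sigma}$ is inverse-closed by Proposition~\ref{th:single_generator_sequence_spaces}. Since $|a_n|\,\mytau_n$ is bounded and $t_n \to \infty$, one gets $|a_n|\,\tilde\sigma_n \to 0$, i.e.\ $a \in c_0^{\tilde\sigma}$, hence $b \in c_0^{\tilde\sigma}$. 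But on the infinite subsequence where the running maximum is attained, $t_n = |b_n|\,\mytau_n$, one has $|b_n|\,\tilde\sigma_n = 1$, contradicting $b \in c_0^{\tilde\sigma}$. The rescaling trick manufactures, for each individual $a$, a $c_0$-type algebra in which $a$ does lie and to which the single-generator machinery applies --- something your fixed-weight truncation cannot do.
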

\proof
Consider $ x(\myt) = \mysumtxt{n=0}{\infty} a_n \myt^n  \in \linfomeg $ with 
$ x(\myt) \neq 0 $ for $ \myt \in \Dr $. Then $ x $ is invertible in $ \lone $ (\cf
Proposition \ref{th:single_generator_sequence_spaces}),
\ie $ 1/x(\myt) = \mysumtxt{n=0}{\infty} \ainvn \myt^n  \in \lone $. Let us assume contradictory
that $ 1/x(\myt) \not \in \linfomeg $.
This means that 
$ \limsup_{n \to \infty } \modul{\ainvn[n]} \mytau_{n} = \infty $ and then
\begin{align}  
\tn = \max_{0 \le m \le n} \modul{\ainvn[m]} \mytau_{m} \to \infty \assh 
n \to \infty, 
\label{eq:linfomegr_inverse-closed-1}
\end{align}  
and $ \tn[n+1] \ge \tn > 0 $ for $ n = 0, 1, \ldotsdot $
Let $ \wtilden = \mytau_n / \tn $ for $ n = 0, 1, \ldotsdot $
We have
\begin{align*}  
0 < \wtilden = \frac{\mytau_n}{\tn} \le \frac{\mytau_n}{\tn[j]} \le
c \frac{\mytau_j}{\tn[j]}
= c \wtilden[j],
\quad \tfrac{n}{2} \le j \le n, 
\end{align*} 
so the space $ \czeromegtilde = \inset{a \in \linfomegtil \mid a_n \wtilden  \to 0
\assh n \to \infty} $ with $ \wtilde = (\wtilden)_{n \ge 0} $
is a Banach algebra which is inverse-closed 
(cf.~Propositions \ref{th:linf_czer_algebra} and
\ref{th:single_generator_sequence_spaces}).

By assumption $ \sup_{n \ge 0} \modul{a_n} \mytau_{n} < \infty $ and then
$ \modul{a_n} \wtilden[n] \to 0 $ as $ n \to \infty $.
From Proposition \ref{th:single_generator_sequence_spaces}
it then follows
\begin{align} 
\modul{\ainvn[n]} \wtilden[n] \to 0 \assh n \to \infty.
\label{eq:linfomegr_inverse-closed-2}
\end{align} 
However, it follows from \refeq{linfomegr_inverse-closed-1}
that for some infinite subset $ \Nbf \subset \naza $
we have
\begin{align} 
\tn = \modul{\ainvn[n]} \mytau_{n} \for n \in \Nbf.
\label{eq:linfomegr_inverse-closed-3}
\end{align} 
Otherwise there would exist an $ n_1 \ge 1 $ with
$ 
\tn = \max_{0 \le m \le n} \modul{\ainvn[m]} \mytau_{m} 
> \modul{\ainvn[n]} \mytau_{n} $ for $ n = n_1, n_1+1, \ldots, $
which in fact means 
$ \tn[n-1] = \max_{0 \le m \le n-1} \modul{\ainvn[m]} \mytau_{m} 
> \modul{\ainvn[n]} \mytau_{n} $, 
and then 
$ \tn = \tn[n-1] $ for $  n = n_1, n_1+1, \ldots, $ a contradiction to
\refeq{linfomegr_inverse-closed-1}.
From \refeq{linfomegr_inverse-closed-3}
we then get
\begin{align*} 
\modul{\ainvn[n]} \wtilden[n]  =
\modul{\ainvn[n]} \mytau_n/\tn  = 1,
\quad n \in \Nbf,
\end{align*} 
a contradiction to \refeq{linfomegr_inverse-closed-2}.
\proofend
\subsection{The \powser $\boldsymbol{\sum_{n=0}^\infty (n+1)^{\alp} \myxi^n} $}
Our analysis continues with a special representation of the 
\powser $\sum_{n=0}^\infty (n+1)^{\alp} \myxi^n $, and we will make use of the binomial expansion
\begin{align}
(1-\myxi)^{\beta} = &  \sum_{n=0}^\infty (-1)^n \tbinom{\beta}{n} \myxi^n 
\for \xiunitdisk \qquad (\beta \in \reza),
\label{eq:binomial_a}
\\[-1mm]
& (-1)^n \tbinom{\beta}{n}
= 
\sum_{s=0}^{m-1} d_{\beta,s} n^{-\beta-1-s} +\Landau(n^{-\beta-1-m})
\as n \to \infty,
\label{eq:binomial_b}
\end{align}
with certain real coefficients $ d_{\beta,s} $ for $ s = 0, 1, \ldots, m-1, 
\, m = 0, 1, \dots $,
where $ d_{\beta,0} = 1/\Gamma(-\beta), \beta \neq 0, 1, \ldots $,
 \cf\eg equation (6.1.47) in \myciteb{Abramowitz}{Stegun}{72}.
We need the following result.
\begin{lemma} For $ 0 < \alp < 1 $ we have, with some coefficients $ r_0, r_1, \ldots, $
\begin{align}
& \mfrac{1}{\Gamma(\alpone)}
\sum_{n=0}^\infty (n+1)^{\alp} \myxi^n
=
(1-\myxi)^{\malpone} r(\myxi) \for
\myxi \in \koza, \ \modul{\myxi} < 1,
\label{eq:pow_3_2_b}
\\[-1mm]
&
\with r(\myxi) = \sum_{n=0}^\infty r_n \myxi^n, 
\quad
r(1) = 1, 
\quad r_n = \Landau(n^{\malptwo}) \assh n \to \infty.
\label{eq:r-rep}
 \end{align}
\label{th:pow_3_2}
\end{lemma}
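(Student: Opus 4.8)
The plan is to relate the generating function $\sum_{n\ge 0}(n+1)^\alpha\xi^n$ to the binomial series $(1-\xi)^{-\alpha-1}=\sum_{n\ge0}(-1)^n\binom{-\alpha-1}{n}\xi^n$, whose coefficients behave like $\tfrac{1}{\Gamma(\alpha+1)}n^\alpha$ up to lower order by \refeq{binomial_b}. First I would define $r(\xi)$ by the quotient, i.e. set
\begin{align*}
r(\xi)\;=\;(1-\xi)^{\alpha+1}\cdot\mfrac{1}{\Gamma(\alpha+1)}\sum_{n=0}^\infty(n+1)^\alpha\xi^n,
\end{align*}
which is a well-defined formal power series since $(1-\xi)^{\alpha+1}$ has constant term $1$; write $r(\xi)=\sum_{n\ge0}r_n\xi^n$. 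The two identities \refeq{pow_3_2_b} and the first two claims in \refeq{r-rep} then need to be checked, the substantive one being the decay $r_n=\Landau(n^{-\alpha-2})$.

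For the coefficient identity, I would use \refeq{binomial_a}--\refeq{binomial_b} with $\beta=-\alpha-1$ and $m=2$: this gives $(-1)^n\binom{-\alpha-1}{n}=\tfrac{1}{\Gamma(\alpha+1)}n^\alpha+d_{-\alpha-1,1}n^{\alpha-1}+\Landau(n^{\alpha-2})$. Comparing with the elementary expansion $(n+1)^\alpha=n^\alpha+\alpha n^{\alpha-1}+\Landau(n^{\alpha-2})$, one sees that $\tfrac{1}{\Gamma(\alpha+1)}(n+1)^\alpha$ and $(-1)^n\binom{-\alpha-1}{n}$ agree to order $n^{\alpha-1}$ but in general differ at order $n^{\alpha-2}$. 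Hence, writing $\tfrac{1}{\Gamma(\alpha+1)}\sum(n+1)^\alpha\xi^n=(1-\xi)^{-\alpha-1}+g(\xi)$ with $g(\xi)=\sum g_n\xi^n$ and $g_n=\Landau(n^{\alpha-2})$, multiplication by $(1-\xi)^{\alpha+1}$ yields $r(\xi)=1+(1-\xi)^{\alpha+1}g(\xi)$, so $r(1)=1$ is immediate provided $(1-\xi)^{\alpha+1}g(\xi)$ vanishes at $\xi=1$; since $\alpha+1>1$ and $g$ converges at $\xi=1$ (because $g_n=\Landau(n^{\alpha-2})$ is summable as $\alpha<1$), this holds. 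Alternatively, and perhaps cleaner, I would evaluate \refeq{pow_3_2_b} in the Abel/limit sense as $\xi\uparrow1$: the left side is a series with coefficients $\sim\tfrac{1}{\Gamma(\alpha+1)}n^\alpha$, the right side is $(1-\xi)^{-\alpha-1}r(\xi)$, and matching the leading singular behaviour $(1-\xi)^{-\alpha-1}\cdot\Gamma(\alpha+1)^{-1}\Gamma(\alpha+1)=(1-\xi)^{-\alpha-1}$ forces $r(1)=1$ by a Tauberian/Abelian comparison.

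The main obstacle is the decay estimate $r_n=\Landau(n^{-\alpha-2})$. From $r(\xi)=1+(1-\xi)^{\alpha+1}g(\xi)$ I would argue as follows: $(1-\xi)^{\alpha+1}$ has coefficients $(-1)^n\binom{\alpha+1}{n}=\Landau(n^{-\alpha-2})$ by \refeq{binomial_b} with $\beta=\alpha+1$, $m=0$, and $g$ has coefficients $g_n=\Landau(n^{\alpha-2})$. The coefficients of the Cauchy product $(1-\xi)^{\alpha+1}g(\xi)$ are then $\sum_{k=0}^n(-1)^k\binom{\alpha+1}{k}g_{n-k}$, and the standard convolution lemma for sequences decaying like $n^{-p}$ and $n^{-q}$ with $p=\alpha+2>1$ and $q=2-\alpha>1$ gives a product decaying like $n^{-\min(p,q)}=n^{-\min(\alpha+2,\,2-\alpha)}$. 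For $\alpha<1$ this is $n^{-(2-\alpha)}$, which is \emph{not} as strong as the claimed $n^{-(\alpha+2)}$ when $\alpha<0$... but here $0<\alpha<1$ so $2-\alpha$ could still be less than $\alpha+2$; indeed $2-\alpha<\alpha+2\iff\alpha>0$, so the naive bound only gives $n^{-(2-\alpha)}$. To recover the sharper $n^{-(\alpha+2)}$ I would instead expand $g$ itself one term further: from \refeq{binomial_b} with $m=2$, the $n^{\alpha-1}$-coefficients of $\tfrac{1}{\Gamma(\alpha+1)}(n+1)^\alpha$ and $(-1)^n\binom{-\alpha-1}{n}$ may differ, producing a genuine $c\,n^{\alpha-1}$ term in $g$; that term, call it $c(1-\xi)^{-\alpha}$ up to a correction, contributes $c(1-\xi)^{\alpha+1}(1-\xi)^{-\alpha}=c(1-\xi)$ to $(1-\xi)^{\alpha+1}g(\xi)$, which only affects $r_0,r_1$ and not the tail; subtracting it off leaves a remainder decaying like $n^{\alpha-2}$ convolved with $n^{-\alpha-2}$, i.e. $n^{-\min(2-\alpha,\alpha+2)}=n^{-(2-\alpha)}$ again. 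The resolution, which I expect the paper uses, is to match \emph{both} the $n^\alpha$ and $n^{\alpha-1}$ coefficients: writing $\tfrac{1}{\Gamma(\alpha+1)}(n+1)^\alpha=(-1)^n\binom{-\alpha-1}{n}+e_n$ where now $e_n=\Landau(n^{\alpha-2})$ \emph{and additionally} $\sum_n e_n\xi^n$ equals $(1-\xi)^{-\alpha+1}$ times a summable series (so its generating function has singularity order only $\alpha-1<0$ at $\xi=1$, i.e. vanishes there), one gets $(1-\xi)^{\alpha+1}\sum e_n\xi^n$ with coefficients $=\Landau(n^{-\alpha-2})$ by the improved convolution bound $n^{-(\alpha+2)}\ast(\text{summable})=\Landau(n^{-(\alpha+2)})$. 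Carefully tracking which binomial coefficients $d_{-\alpha-1,s}$ versus the Taylor coefficients of $(n+1)^\alpha$ coincide is exactly the delicate bookkeeping; I would organize it by writing $(n+1)^\alpha=n^\alpha(1+1/n)^\alpha=\sum_{j\ge0}\binom{\alpha}{j}n^{\alpha-j}$ and comparing term by term with $\Gamma(\alpha+1)\,(-1)^n\binom{-\alpha-1}{n}=\sum_s \Gamma(\alpha+1)d_{-\alpha-1,s}n^{\alpha-s}$, using $d_{-\alpha-1,0}=1/\Gamma(\alpha+1)$, and then invoking the convolution estimate with the $\Landau(n^{-\alpha-2})$ coefficients of $(1-\xi)^{\alpha+1}$ to conclude.
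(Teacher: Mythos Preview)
Your framework is right: define $r(\xi)=(1-\xi)^{\alpha+1}\cdot\tfrac{1}{\Gamma(\alpha+1)}\sum_{n\ge0}(n+1)^{\alpha}\xi^{n}$, subtract off enough binomial series from the left factor so that what remains after multiplying by $(1-\xi)^{\alpha+1}$ is a polynomial plus a rapidly decaying tail. The gap is in closing the decay estimate. You match only the $n^{\alpha}$ and $n^{\alpha-1}$ terms, obtain a remainder with coefficients $\Landau(n^{\alpha-2})$, and then try to convolve this with the $\Landau(n^{-\alpha-2})$ coefficients of $(1-\xi)^{\alpha+1}$. As you yourself notice, the standard convolution bound gives only $\Landau(n^{-(2-\alpha)})$, which is strictly weaker than the claimed $\Landau(n^{-(\alpha+2)})$ for every $\alpha\in(0,1)$. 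Your attempted rescue, the assertion that $\Landau(n^{-(\alpha+2)})\ast(\text{summable})=\Landau(n^{-(\alpha+2)})$, is false in general: if the summable sequence decays only like $n^{-q}$ with $1<q<\alpha+2$, the convolution is merely $\Landau(n^{-q})$.

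The paper's fix is simply to match \emph{more} terms. One writes
\[
\tfrac{1}{\Gamma(\alpha+1)}\sum_{n\ge0}(n+1)^{\alpha}\xi^{n}
=\sum_{j=0}^{m-1}c_{j}(1-\xi)^{-\alpha-1+j}+s(\xi),
\qquad s_{n}=\Landau(n^{\alpha-m}),\ c_{0}=1,
\]
and takes $m=4$. Then $r(\xi)=\sum_{j=0}^{3}c_{j}(1-\xi)^{j}+(1-\xi)^{\alpha+1}s(\xi)$; the first sum is a polynomial (so irrelevant for the tail, and $c_{0}=1$ gives $r(1)=1$), while the second is a convolution of two sequences \emph{both} of size $\Landau(n^{-\alpha-2})$, since $\alpha-4\le-\alpha-2$ for $\alpha\le1$. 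Now Proposition~\ref{th:linf_czer_algebra}(b) with $\sigma_{0}=1$, $\sigma_{n}=n^{\alpha+2}$ says $\ell_{\infty}^{\sigma}$ is a Banach algebra under convolution, and the product stays $\Landau(n^{-\alpha-2})$. The point you were circling around---that each subtracted term $c_{j}(1-\xi)^{-\alpha-1+j}$ becomes the polynomial $c_{j}(1-\xi)^{j}$ after multiplication---is exactly right; you just need to subtract enough of them (any $m\ge2\alpha+2$, hence $m=4$) so that the remainder itself lands in the target algebra before you convolve.
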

\proof 
We first observe that, for each $ m \ge 0 $, there exist real coefficients
$ c_0, c_1, \ldots, c_{m-1} $ with
\begin{eqnarray}
\mfrac{1}{\Gamma(\alpone)}
\sum_{\n=0}^\infty (\n+1)^{\alp} \myxi^\n
\eq
\sum_{j=0}^{m-1} c_j  (1-\myxi)^{\malpone+j} + s(\myxi) \for
\myxi \in \koza, \ \modul{\myxi} < 1,
\label{eq:pow_3_2}
 \end{eqnarray}
with $ s(\myxi) = \sum_{n=0}^\infty s_n \myxi^n $, where
$ s_n = \Landau(n^{\alpha-m}) $ as $ n \to \infty $, and we have 
$ c_0 = 1 $.
This follows by comparing the coefficients in the Taylor expansion
$ \frac{1}{\gammaalpone} (n+1)^{\alp} = 
\sum_{t=0}^{m-1} e_t n^{\alpone-t} +\Landau(n^{\alpha-m}) $
with the coefficients in the expansions considered in \refeq{binomial_a} and \refeq{binomial_b}.

A reformulation of \refeq{pow_3_2} gives, with $ m = 4 $,
\begin{align*}
\mfrac{1}{\Gamma(\alpone)}
\sum_{n=0}^\infty (n+1)^{\alp} \myxi^n
& =
(1-\myxi)^{\malpone} 
\klala{\sum_{j=0}^{3} c_j  (1-\myxi)^{j} 
+
(1-\myxi)^{\alpone} s(\myxi)} \for
\myxi \in \koza, \ \modul{\myxi} < 1,
\\[-1mm]
& \hspace{10mm}
 \with s(\myxi) = \sum_{n=0}^\infty s_n \myxi^n, \quad s_n = \Landau(n^{\alpmfour})
\as n \to \infty.
\end{align*}
The statement of the lemma now follows from
statement (b) of Proposition \ref{th:linf_czer_algebra}, applied with
$ \sigma_0 = 1 $ and $ \sigma_n = n^{\alptwo} $ for $ n = 1,2, \ldots, $
and from
\refeq{binomial_a}, \refeq{binomial_b} applied with $ \beta = \alpha+1, m = 0 $.
\proofend
\subsection{The main results}
As a consequence of Lemma \ref{th:pow_3_2} we obtain the following representation.
\begin{corollary}
For the quadrature weights $ \myomega_0, \myomega_1, \ldots $
considered in \refeq{omegan-def} we have,
with the \powser $ r $ from  \refeq{pow_3_2_b}, \refeq{r-rep}, 
\begin{align}
\omega(\myxi) = \sum_{n=0}^\infty \myomega_n \myxi^n 
=
(1-\myxi)^{\malp} r(\myxi)
\for \myxi \in \koza, \ \modul{\myxi} < 1.
\label{eq:omega-rep}
 \end{align}
\end{corollary}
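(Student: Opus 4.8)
The plan is to derive \refeq{omega-rep} as an immediate consequence of Lemma~\ref{th:pow_3_2}. Write $g$ for the power series treated there, \ie
$g(\myxi) \defeq \mfrac{1}{\Gamma(\alpone)}\sum_{n=0}^{\infty}(n+1)^{\alp}\myxi^{n}$; this series has radius of convergence $1$, and by \refeq{pow_3_2_b} we have $g(\myxi) = (1-\myxi)^{\malpone} r(\myxi)$ for $\modul{\myxi}<1$.

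First I would rewrite the generating function $\omega$ in terms of $g$. Using the definition \refeq{omegan-def} of the quadrature weights,
\[
\omega(\myxi) \;=\; \mfrac{1}{\Gamma(\alpone)}\sum_{n=0}^{\infty}\bigl[(n+1)^{\alp}-n^{\alp}\bigr]\myxi^{n},
\qquad \modul{\myxi}<1,
\]
the series again converging in the open unit disc. Splitting the sum into two and reindexing the second one by $m=n-1$ gives $\mfrac{1}{\Gamma(\alpone)}\sum_{n=0}^{\infty} n^{\alp}\myxi^{n} = \mfrac{1}{\Gamma(\alpone)}\sum_{m=0}^{\infty}(m+1)^{\alp}\myxi^{m+1} = \myxi\, g(\myxi)$, hence $\omega(\myxi) = g(\myxi) - \myxi\, g(\myxi) = (1-\myxi)\, g(\myxi)$ for $\modul{\myxi}<1$.

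It then remains to substitute the representation of $g$ and simplify the exponent:
\[
\omega(\myxi) \;=\; (1-\myxi)\,(1-\myxi)^{\malpone} r(\myxi) \;=\; (1-\myxi)^{\malp} r(\myxi),
\qquad \modul{\myxi}<1,
\]
which is precisely \refeq{omega-rep}. There is no real obstacle beyond bookkeeping; the only point deserving a word of care is that all series in sight have radius of convergence $1$, so that splitting, reindexing and multiplying by the polynomial $1-\myxi$ are legitimate throughout $\{\,\modul{\myxi}<1\,\}$, and the resulting identity of holomorphic functions there is equivalent to the stated identity of (formal) power series.
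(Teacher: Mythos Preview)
Your argument is correct and follows exactly the route taken in the paper: one observes that $\omega(\myxi) = (1-\myxi)\cdot\tfrac{1}{\Gamma(\alpone)}\sum_{n\ge 0}(n+1)^{\alp}\myxi^{n}$ and then substitutes the representation \refeq{pow_3_2_b} from Lemma~\ref{th:pow_3_2}. The paper states the first identity as obvious, whereas you spell out the reindexing; otherwise the proofs are identical.
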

\proof
The two \powser $ \sum_{n=0}^\infty (n+1)^{\alp} \myxi^n $ and 
$ \omega(\myxi) = \sum_{n=0}^\infty \myomega_n \myxi^n $
with coefficients as in \refeq{omegan-def}
are obviously related as follows,
\begin{align*}
\sum_{n=0}^\infty \myomega_n \myxi^n
=  
\mfrac{1-\myxi}{\Gamma(\alpone)} \sum_{n=0}^\infty (n+1)^{\alp} \myxi^n.
\end{align*}
The representation \refeq{pow_3_2_b} now implies the statement of the corollary.
\proofend

\bn
Inverting \refeq{omega-rep} immediately gives the \fps representation
\begin{align}
\sum_{n=0}^\infty \aninv \myxi^n = (1-\myxi)^{\alpha} \powinv{r(\myxi)},
\label{eq:omegainv-final-rep}
\end{align}
where
$ \aninv $ denote the coefficients of the \inverse of the \fps
$ \myomega(\myxi) = \sum_{n=0}^\infty \myomegan \myxi^n $, cf.~\refeq{ainv-def}.

In the sequel we examine the asymptotic behavior of the coefficients in the \fps
\begin{align}
\powinv{r(\myxi)} = \sum_{n=0}^\infty \rinv{n} \myxi^n.
\label{eq:rinv-rep}
\end{align}
\begin{lemma}
We have 
$ \rinv{n}
= \Landau(n^{\malptwo}) $ as $ n \to \infty $.
\label{th:rninv-estimate}
\end{lemma}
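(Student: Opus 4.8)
The plan is to interpret $\powinv{r(\myxi)}$ as the inverse of the element $r$ inside the Banach algebra $\linfomeg$ and then to conclude via the inverse-closedness of that algebra, Proposition~\ref{th:linfomeg_inverse-closed}. Concretely, I choose the weights $\sigma_0 = 1$ and $\sigma_n = n^{\alptwo}$ for $n \ge 1$. Writing $\beta := \alptwo > 1$, these weights are positive, satisfy $\sum_{n\ge0}\sigma_n^{-1} < \infty$, and obey the doubling condition \refeq{omegan_bound} with $c = 2^{\beta}$, since $\sigma_n/\sigma_j = (n/j)^{\beta} \le 2^{\beta}$ whenever $n/2 \le j \le n$. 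As observed right after the definition of $\linfomeg$, for such weights $\linfomeg$ is exactly the space of sequences $(a_n)_{n\ge0}$ with $a_n = \Landau(n^{-\beta})$. By Lemma~\ref{th:pow_3_2} (\cf \refeq{r-rep}) the coefficients of $r$ satisfy $r_n = \Landau(n^{\malptwo})$, so $r = (r_n)_{n\ge0} \in \linfomeg$; in particular $\sum_{n\ge0}|r_n| < \infty$, so $r$ defines a continuous function on $\Dr$, with $r(0) = r_0 = 1/\Gamma(\alpone) \neq 0$ and $r(1) = 1$.

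The crucial -- and only non-routine -- step is to check that $r(\myxi) \neq 0$ for all $\myxi \in \Dr$, which is the hypothesis needed to apply Proposition~\ref{th:linfomeg_inverse-closed}. Here I would proceed as follows. Set $A(\myxi) = \sum_{n\ge0}\aninv\myxi^n$. By the elementary parts \refeq{omeganinv-negative}--\refeq{omeganinv-sum} of Lemma~\ref{th:omeganinv-props}, $\sum_{n\ge0}|\aninv| = 2\Gamma(\alpone) < \infty$, so $A$ too is continuous on $\Dr$. Multiplying the formal power-series identity \refeq{omegainv-final-rep}, \ie\ $A = (1-\myxi)^{\alp}\,\powinv{r(\myxi)}$, by $r$ gives the formal identity $A(\myxi)\,r(\myxi) = (1-\myxi)^{\alp}$, whose right-hand side is the binomial series \refeq{binomial_a}; its coefficients $(-1)^n\binom{\alp}{n} = \Landau(n^{\malpone})$ are absolutely summable as well. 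Since all three power series converge absolutely on $\Dr$, the Cauchy-product identity $A(\myxi)\,r(\myxi) = (1-\myxi)^{\alp}$ holds pointwise for every $\myxi \in \Dr$. For $\myxi \in \Dr$ with $\myxi \neq 1$ the right-hand side is non-zero, hence $r(\myxi) \neq 0$; and $r(1) = 1 \neq 0$. Thus $r$ does not vanish on $\Dr$.

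It now follows from Proposition~\ref{th:linfomeg_inverse-closed} that $\powinv{r(\myxi)} = \sum_{n\ge0}\rinv{n}\myxi^n \in \linfomeg$, \cf \refeq{rinv-rep}, and by the description of $\linfomeg$ recalled above this is precisely the assertion $\rinv{n} = \Landau(n^{-\beta}) = \Landau(n^{\malptwo})$.

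I expect the main obstacle to be exactly the non-vanishing of $r$ on the closed disc $\Dr$; the remaining steps are bookkeeping about which weight sequence to use and checking \refeq{omegan_bound}. Two points deserve care: first, \refeq{omegainv-final-rep} should be exploited purely formally and the analytic identity $A(\myxi)r(\myxi) = (1-\myxi)^{\alp}$ deduced only afterwards from absolute convergence, so that one never needs to know in advance that $\powinv{r(\myxi)}$ has radius of convergence at least $1$; second, one should make sure that the facts \refeq{omeganinv-negative}--\refeq{omeganinv-sum} invoked here are established by the elementary recursion for the $\aninv$, so that no circularity with the estimate \refeq{omeganinv-decay} (whose proof this lemma serves) is incurred.
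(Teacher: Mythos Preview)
Your argument is correct and follows the same overall strategy as the paper: verify $r \in \linfomeg$ for the weights $\sigma_0 = 1,\ \sigma_n = n^{\alpha+2}$, show that $r$ does not vanish on $\Dr$, and invoke Proposition~\ref{th:linfomeg_inverse-closed}. The difference lies in how the nonvanishing of $r$ is established. The paper works with the identity $r(\myxi) = (1-\myxi)^{\alpha}\omega(\myxi)$ on the open disc and appeals to the quantitative lower bound $|\omega(\myxi)| \ge \tfrac{1}{2\Gamma(\alpone)}$ of \refeq{omegan-lowerbound}, whose proof requires, besides Kaluza's lemma, the additional Erd\H{o}s--Feller--Pollard-type Lemma~\ref{th:powsermod}; continuity of $r$ then carries the conclusion to the boundary. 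You instead use the dual identity $A(\myxi)\,r(\myxi) = (1-\myxi)^{\alpha}$, valid pointwise on the closed disc once one knows $A = (\aninv)_{n\ge0} \in \lone$; this needs only the sign pattern and summability \refeq{omeganinv-negative}--\refeq{omeganinv-sum}, i.e.\ Kaluza's lemma alone. Your route is thus slightly more economical, bypassing Lemma~\ref{th:powsermod} altogether. One small remark: the phrase ``established by the elementary recursion for the $\aninv$'' undersells what is actually required---in the paper, \refeq{omeganinv-negative}--\refeq{omeganinv-sum} are obtained from Lemma~\ref{th:omegan-quo} together with Kaluza's Lemma~\ref{th:kaluza}, not from a bare recursion---but these results are indeed logically independent of \refeq{omeganinv-decay}, so your concern about circularity is correctly addressed.
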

\proof
It follows from \refeq{r-rep} that the \powser $ r $ considered in \refeq{pow_3_2_b}
satisfies $ r \in \linfomeg $ for the specific choice $ \mytau_0 = 1 $ and  
$ \mytau_n = n^{\alpha+2} $ for $ n \ge 1 $.
In addition we have $ r(\myxi) \neq 0 $ for $ \myxi \in \koza, \modul{\myxi} \le 1 $
(a proof is given below). From Proposition \ref{th:linfomeg_inverse-closed}
we then obtain
$ \rinv{n} =\Landau(n^{\malptwo}) $ as $ n \to \infty $.

It remains to show that 
$ r(\myxi) \neq 0 $ holds for $ \myxi \in \koza, \modul{\myxi} \le 1 $.
For this purpose we consider a reformulation of \refeq{omega-rep},
\begin{align*}
r(\myxi) = (1-\myxi)^{\alpha} \mysum{n=0}\infty \myomegan \myxi^n
\for \myxi \in \koza, \ \modul{\myxi} < 1.
\end{align*}
We have
\begin{align}
\modulbi{\mysum{n=0}\infty \myomegan \myxi^n} \ge 
\mfrac{1}{2\Gamma(\alpone)}
\for \myxi \in \koza, \ \modul{\myxi} < 1,
\label{eq:omegan-lowerbound}
\end{align}
a proof of \refeq{omegan-lowerbound} is presented in the next section. Since
$ r(1) \neq 0 $ and
$ r $ is continuous on $ \insetno{\xi \in \koza \mid \modul{\myxi} \le 1 } $,
\refeq{omegan-lowerbound} then implies
$ r(\myxi) \neq 0 $ for $ \myxi \in \koza, \modul{\myxi} \le 1 $ as desired,
 and thus the statement of the lemma is proved.
\proofend

\bn
We are now in a position to
continue with the verification of the asymptotical behavior \refeq{omeganinv-decay}
for the coefficients of the \powser $ \powinv{\myomega(\myxi)} $.
From the representation 
\refeq{binomial_a}, \refeq{binomial_b} with $ \beta = \alp $
 it follows that the coefficients
in the expansion
$ (1-\myxi)^{\alp} =  \sum_{\n=0}^\infty (-1)^\n \tbinom{\alp}{\n} \myxi^\n $
satisfy 
$ (-1)^\n \binom{\alpha}{\n} = \Landau(\n^{\malpone}) $ as $ \n \to \infty $.
This and Lemma~\ref{th:rninv-estimate}
(which in particular means $ \rinv{n} =  \Landau(n^{\malpone}) $)
and 
part (b) of Proposition \ref{th:linf_czer_algebra}, 
applied with $ \mytau_0 = 1 $ and $ \mytau_n = n^{\alpone} $ for $ n \ge 1 $,
finally results in the desired estimate 
\refeq{omeganinv-decay} for the coefficients of the \powser $ \powinv{\myomega(\myxi)} $.
\subsection{The proof of the lower bound \refeq{omegan-lowerbound}}
To complete our proof of \refeq{omeganinv-decay},
we need to show that
\refeq{omegan-lowerbound} holds. We start with a useful lemma.
\begin{lemma}
The quadrature weights \infseqzerind{\myomega} in \refeq{omegan-def} are positive
and satisfy
$ \sum_{n=0}^{\infty} \myomega_n \allowbreak = \infty $. In addition we have
\begin{align} 
\mfrac{\myomega_{n+1}}{\myomega_n}
> 
\mfrac{\myomega_{n}}{\myomega_{n-1}}
\for n = 1, 2, \ldots \ .
\label{eq:omegan-quo}
\end{align} 
\label{th:omegan-quo}
\end{lemma}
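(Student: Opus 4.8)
The plan is to prove the three assertions in turn, the first two being immediate and the inequality \refeq{omegan-quo} carrying the real content. Positivity of $\myomega_n$ is clear: since $\alp>0$, the map $t\mapsto t^{\alp}$ is strictly increasing on $[0,\infty)$, so $(n+1)^{\alp}-n^{\alp}>0$ and hence $\myomega_n>0$ for all $n$. For the divergence of $\sum_n\myomega_n$ I would use that the partial sums telescope,
\[
\mysum{n=0}{N}\myomega_n=\frac{1}{\Gamma(\alpone)}\bigl((N+1)^{\alp}-0^{\alp}\bigr)=\frac{(N+1)^{\alp}}{\Gamma(\alpone)}\longrightarrow\infty\qquad(N\to\infty),
\]
because $\alp>0$. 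So the substance of the lemma is \refeq{omegan-quo}.

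Next, note that \refeq{omegan-quo} is equivalent to $\myomega_{n-1}\,\myomega_{n+1}>\myomega_n^{2}$, i.e.\ to the strict log-convexity of the sequence $(\myomega_n)_{n\ge0}$. I would deduce this from the strict log-convexity, on $[0,\infty)$, of the function $g(x):=(x+1)^{\alp}-x^{\alp}$, which suffices since $\myomega_n=g(n)/\Gamma(\alpone)$. The key step is the integral representation $g(x)=\alp\int_0^1(x+s)^{\alp-1}\,ds$, obtained by writing $(x+1)^{\alp}-x^{\alp}=\int_x^{x+1}\alp t^{\alp-1}\,dt$ and substituting $t=x+s$. For every fixed $s\in(0,1)$ the integrand $x\mapsto(x+s)^{\alp-1}$ is strictly log-convex on $[0,\infty)$, since $\frac{d^2}{dx^2}\log\bigl((x+s)^{\alp-1}\bigr)=\frac{1-\alp}{(x+s)^2}>0$ (equivalently, this is the strict concavity of $\log$ multiplied by the negative number $\alp-1$).

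Consequently, for $a\ne b$ in $[0,\infty)$ and $\lambda\in(0,1)$, strict pointwise log-convexity of the integrand followed by Hölder's inequality with exponents $1/\lambda$ and $1/(1-\lambda)$ gives
\begin{align*}
\frac{g(\lambda a+(1-\lambda)b)}{\alp}
&=\int_0^1\bigl(\lambda(a+s)+(1-\lambda)(b+s)\bigr)^{\alp-1}\,ds\\
&<\int_0^1(a+s)^{\lambda(\alp-1)}(b+s)^{(1-\lambda)(\alp-1)}\,ds\\
&\le\Bigl(\int_0^1(a+s)^{\alp-1}\,ds\Bigr)^{\lambda}\Bigl(\int_0^1(b+s)^{\alp-1}\,ds\Bigr)^{1-\lambda},
\end{align*}
that is, $g(\lambda a+(1-\lambda)b)<g(a)^{\lambda}g(b)^{1-\lambda}$. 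Taking $a=n-1$, $b=n+1$ and $\lambda=\tfrac{1}{2}$ (legitimate for $n\ge1$) yields $g(n)^2<g(n-1)\,g(n+1)$; dividing by $g(n)g(n-1)>0$ and recalling $\myomega_m=g(m)/\Gamma(\alpone)$ gives exactly \refeq{omegan-quo}.

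The delicate point — and what I expect to be the main obstacle — is making the strictness rigorous: one has to argue that the strict inequality between the (continuous, positive) integrands survives integration over $(0,1)$, and that Hölder's inequality is applicable, the functions being integrable on $(0,1)$ precisely because $\alp>0$ (near $s=0$ one has $\int_0^1 s^{\alp-1}\,ds=1/\alp<\infty$). A purely computational alternative would be to verify $g(x)g''(x)>g'(x)^2$ directly from the formulas for $g,g',g''$, but the resulting inequality among the four terms $(x+1)^{\alp}$, $x^{\alp}$, $(x+1)^{\alp-1}$, $x^{\alp-1}$ (and the corresponding exponents $\alp-2$) is considerably less transparent, so I would favour the log-convexity argument above.
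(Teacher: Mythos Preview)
Your argument is correct, but the paper takes a shorter and more elementary route to the key inequality. Instead of establishing log-convexity of $g(x)=(x+1)^{\alp}-x^{\alp}$ via an integral representation and H\"older's inequality, the paper applies the generalized (Cauchy) mean value theorem to the functions $x\mapsto f(x+1)$ and $x\mapsto f(x)$ with $f(x)=x^{\alp}$ on the interval $[\ell-1,\ell]$. This yields directly
\[
\frac{\myomega_{\ell}}{\myomega_{\ell-1}}=\frac{f(\ell+1)-f(\ell)}{f(\ell)-f(\ell-1)}=\frac{f'(t_{\ell})}{f'(t_{\ell}-1)}=\Bigl(1-\tfrac{1}{t_{\ell}}\Bigr)^{1-\alp}=:h(t_{\ell})
\]
for some $t_{\ell}\in(\ell,\ell+1)$; since $h$ is strictly increasing on $(1,\infty)$ and $t_{\ell}<\ell+1<t_{\ell+1}$, the inequality $\myomega_{\ell+1}/\myomega_{\ell}>\myomega_{\ell}/\myomega_{\ell-1}$ follows at once. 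The paper's approach avoids integrability considerations and the strictness discussion you flagged as delicate, at the price of the somewhat opaque choice of mean-value point. Your log-convexity argument is heavier but has the virtue of being a standard, reusable template: integrals of log-convex integrands are log-convex, and this would apply verbatim to other product-integration weights of the form $\int_{0}^{1}\psi(x+s)\,ds$ with $\psi$ log-convex.
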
 
\proof
It follows immediately from the definition that the 
coefficients \infseqzerind{\myomega} are positive.
The identity $ \sum_{\n=0}^{\infty} \myomegan = \infty $ is obvious, and we next present a proof of the inequality \refeq{omegan-quo}.
Using the notation
\begin{align*} 
f\klasm{x} = x^{\alp} \for x \ge 0 
\end{align*} 
we obtain the following,
\begin{align*}
\mfrac{\myomegan[\myl]}{\myomegan[\myl-1]} 
& =
\mfrac{f\klasm{\myl+1} \minus f\klasm{\myl}}
{f\klasm{\myl} \minus f\klasm{\myl-1}} 
\idstar
\mfrac{ \prim{f}\klasm{t_\myl} }{ \prim{f}\klasm{t_\myl-1} }
=
\klabi{1-\tfrac{1}{t_\myl}}^{1-\alp}
\, =: \, h\klasm{t_\myl} 
\for \myl = 1, 2, \ldots,
\end{align*}
with some real number $ \intervalargo{t_\myl}{\myl}{\myl+1} $.
Here, the identity \klasmsh{*} follows from the generalized mean value theorem.
The function $ h\klasm{s} $ is monotonically increasing for 
$ s > 0 $ which yields estimate
\refeq{omegan-quo}.
\proofendspruch[ of the lemma]

\bn
For results similar to those in Lemma \ref{th:omegan-quo}, see Eggermont~\cite{Eggermont[79], Eggermont[81b]} and
Linz~\cite[Section 10.4]{Linz[85]}.
It follows from Lemma \ref{th:omegan-quo} 
that the conditions of the following lemma 
are satisfied for
$ p_\n = c \myomegan, \ \n = 0, 1, \ldots, $ with $ c > 0 $ arbitrary but
fixed.
\begin{lemma}
[\cf\mycitea{Kaluza}{28}; see also \myciteatwo{Szeg\"{o}}{Szegoe[26]}, \mycitea{Hardy}{48}, and \mycitea{Linz}{85}] 
Let \infseqzerind{p}
be real numbers satisfying 
\begin{align} 
p_\n > 0 \for \n = 0, 1, \ldots,
\qquad
\mfrac{p_{\mys+1}}{p_\mys}
\gge 
\mfrac{p_{\mys}}{p_{\mys-1}}
\for \mys = 1, 2, \ldots \ .
\label{eq:kaluza-cond}
\end{align} 
Then the inverse 
$ \powinv{p\klami{\myxi}} $
of the \powser $ p(\myxi) = \mysumtxt{\n=0}{\infty} p_\n \myxi^\n $ 
 can be written as follows,
\begin{align} 
\powinv{p\klami{\myxi}} = 
c_0 \minus \mysum{\mys=1}{\infty} c_\mys \myxi^\mys, 
\label{eq:kaluza}
\end{align} 
with coefficients \infseqzerind{c} satisfying
$ c_\mys > 0 $ for $ \mys = 0, 1, \ldots \ $.
If moreover
$ \sum_{\mys=0}^{\infty} p_\mys = \infty $ holds and the \fps
$ p(\xi) = \sum_{\mys=0}^{\infty} p_\mys \xi^\mys $ has convergence radius 1,
then we have $ \sum_{\mys=1}^{\infty} c_\mys = c_0 $. 
\label{th:kaluza}
\end{lemma}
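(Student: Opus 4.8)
The plan is to prove the three assertions in turn: positivity of the $c_\mys$, then the summability statement $\sum_{\mys=1}^\infty c_\mys = c_0$ under the extra hypotheses. First I would fix the normalization by writing $c_0 = 1/p_0 > 0$, which is forced by comparing constant coefficients in $p(\myxi)\,\powinv{p(\myxi)} = 1$. Then I would rescale: replacing $p_\mys$ by $p_\mys/p_0$ changes neither the hypotheses \refeq{kaluza-cond} nor the conclusion, so \mywlog $p_0 = 1$, hence $c_0 = 1$. Writing $\powinv{p(\myxi)} = 1 - \sum_{\mys \ge 1} c_\mys \myxi^\mys$, the identity $p(\myxi)\bigl(1 - \sum_{\mys\ge1} c_\mys\myxi^\mys\bigr) = 1$ yields, by comparing the coefficient of $\myxi^n$ for $n \ge 1$, the recursion
\begin{align*}
c_n = p_n - \mysum{\mys=1}{n-1} p_{n-\mys}\, c_\mys \for n = 1, 2, \ldots,
\end{align*}
with the convention that the empty sum is $0$ (so $c_1 = p_1$).

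The heart of the argument — and the step I expect to be the main obstacle — is proving $c_n > 0$ for all $n$ by induction, because the recursion above is an alternating-sign expression and positivity is not obvious term by term. The key is to exploit the log-convexity hypothesis $p_{\mys+1}/p_\mys \ge p_\mys/p_{\mys-1}$, equivalently $p_{i}p_{j} \le p_{i-1}p_{j+1}$ whenever $i \le j$ (the sequence $p_{i}p_{n-i}$ is "unimodal/bimodal" in $i$ for fixed $n$). I would set up a stronger induction hypothesis than mere positivity: for instance, that the tail sums $R_n := \sum_{\mys \ge 1} c_\mys$-type partial quantities, or better the ratios $c_{n}/p_{n}$, behave monotonically, or — following the classical route — that $p_n - \sum_{\mys=1}^{n-1} p_{n-\mys} c_\mys \ge 0$ together with an auxiliary inequality comparing $c_n$ with $c_{n-1} p_1$ or with $p_n$. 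Concretely, the standard trick is to prove simultaneously by strong induction that $c_n > 0$ and that $p_{n+1} c_m \ge p_n c_{m+1}$ (or a similarly shaped inequality linking consecutive indices), so that the log-convexity of $(p_\mys)$ propagates to the convolution structure; the alternating sum for $c_n$ can then be regrouped into manifestly nonnegative pairs. Since $p(\myxi)$ has a strictly positive coefficient $p_1$, one in fact gets $c_n > 0$ strictly.

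Finally, for the summability statement: assume the radius of convergence of $p(\myxi)$ is $1$ and $\sum_\mys p_\mys = \infty$. Since all $c_\mys > 0$, the series $\sum_\mys c_\mys$ has nonnegative terms, so Abel's theorem applies at $\myxi \uparrow 1$: $\sum_{\mys\ge1} c_\mys = \lim_{\myxi \uparrow 1}\bigl(1 - \powinv{p(\myxi)}\bigr) = 1 - \lim_{\myxi\uparrow1} 1/p(\myxi)$, where the limit may a priori be $+\infty$, in which case $1 - 1/p(\myxi) \to 1 = c_0$. Because $\sum p_\mys = \infty$ and $p_\mys > 0$, we have $p(\myxi) \to +\infty$ as $\myxi \uparrow 1$ (by the monotone convergence / Abel theorem for the nonnegative series $p$), hence $1/p(\myxi) \to 0$ and therefore $\sum_{\mys\ge1} c_\mys = 1 = c_0$. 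Undoing the normalization restores the stated identity $\sum_{\mys=1}^\infty c_\mys = c_0$ in the general case. The only subtlety here is justifying that $\powinv{p(\myxi)}$, which a priori is only a formal power series, actually represents the analytic function $1/p(\myxi)$ on the open unit disc and that its coefficient series converges up to the boundary — but this follows once positivity of the $c_\mys$ is in hand, since then $\sum c_\mys \myxi^\mys$ has nonnegative coefficients and is dominated near $\myxi = 1$ by the bounded quantity $1 - 1/p(\myxi)$, giving radius of convergence at least $1$ and validity of the Abel limit.
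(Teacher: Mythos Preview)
Your plan for the positivity part (normalize to $p_0=1$, derive the convolution recursion, run a strong induction exploiting log-convexity) is the classical Kaluza argument; the paper simply cites Hardy for this step, so there is nothing to compare.

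For the summability part your route differs from the paper's. The paper first extracts from the hypotheses that $p_{s+1}/p_s$ increases to~$1$ (radius of convergence~$1$), hence $(p_s)$ is strictly decreasing; then the convolution identity $\sum_{j=1}^{s} p_{s-j}c_j = p_s c_0$ together with $p_{s-j}\ge p_{s-1}$ gives the clean a~priori bound $\sum_{j=1}^{s} c_j \le (p_s/p_{s-1})\,c_0 \le c_0$ for every~$s$. This shows at once that $\sum_{s\ge 0}|c_s|<\infty$, so $c(\xi)$ is continuous on the closed unit disc, and the Abel limit $1/p(x)\to 0$ as $x\uparrow 1$ finishes the job. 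Your approach skips the monotonicity of~$(p_s)$ and goes straight to Abel's theorem, which is perfectly natural; the gain of the paper's route is that the partial-sum bound is obtained \emph{before} any convergence-radius discussion, so no analyticity argument is needed.

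There is one soft spot in your justification. You write that $\sum c_s\xi^s$ ``is dominated near $\xi=1$ by the bounded quantity $1-1/p(\xi)$, giving radius of convergence at least~$1$''. As stated this is circular: the domination uses the identity $\sum c_s\xi^s = c_0 - 1/p(\xi)$, which you only know on $|\xi|<\rho$ where $\rho$ is the (as yet unknown) radius. Boundedness of $\sum c_s x^s$ on $[0,\rho)$ yields $\sum c_s\rho^s<\infty$ but not $\rho\ge 1$. The clean fix is Pringsheim's theorem: since the coefficients $c_s$ are nonnegative, the real point $\xi=\rho$ must be a singularity of the sum; but for $\rho<1$ the function $c_0-1/p(\xi)$ is analytic at the real point $\rho$ (because $p(\rho)>0$), a contradiction. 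Alternatively, adopt the paper's partial-sum bound via the convolution identity and the monotonicity of~$(p_s)$, which sidesteps the issue entirely.
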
 
\proof Lemma \ref{th:kaluza} is Theorem 22 on page 68 of 
\mycitea{Hardy}{48}. The proof of $ c_\mys > 0 $ for $ \mys = 0, 1, \ldots \ $ is presented there in full detail, and we do not repeat the steps here.
However, the proof of $ \sum_{\mys=1}^{\infty} c_\mys = c_0 $ is omitted
there, so in the sequel we present some details of this proof.
Condition \refeq{kaluza-cond}
and the assumption on the convergence radius of the power
series $ p(\xi) $ means 
$ p_{\mys+1}/p_\mys \to 1 $ as $ \mys \to \infty $.
The second condition in \refeq{kaluza-cond} then implies
$ 0 < p_{\mys+1} < p_\mys $ for $ \mys = 0,1,\ldots\ $.
From $ c_{\mys} \ge 0 $ for $ \mys = 0,1,\ldots $ we obtain
$
p_{\mys-1} \mysumtxt{\myj=1}{\mys} c_\myj \le 
\mysumtxt{\myj=1}{\mys} p_{\mys-\myj} c_\myj  = p_n c_0 $
for $ \mys = 1,2,\ldots\ $.
The latter identity follows from the representation \refeq{kaluza}.
Thus 
\begin{align*}
\mysum{\myj=1}{\mys} c_\myj \le \mfrac{p_\mys}{p_{\mys-1}} c_0 \le c_0 \for
\mys = 1,2,\ldots \ .
\end{align*}
The latter inequality 
means that 
$ c(\xi) = c_0 -\mysumtxt{\myj=1}{\infty} c_\myj \xi^\myj $ is absolutely convergent on the closed unit disc $ \inset{ \xi \in \koza \mid \modul{\xi} \le 1 } $ and hence is continuous on this set. This finally gives
\begin{align*}
0 =
\lim_{0 < x \to 1} \mfrac{1}{\mysumtxt{\myj=0}{\infty} p_\myj x^\myj }
= 
c_0 - \lim_{0 < x \to 1} \mysum{\myj=1}{\infty} c_\myj x^\myj 
= c_0 - \mysum{\myj=1}{\infty} c_\myj.
\end{align*}
This completes the proof of the lemma.
\proofend

\bn
The following lemma is closely related to results in \myciteatwo{Erd\H{o}s, Feller and
Pollard}{Erdoes_Feller_Pollard[49]}. A detailed proof can be found in
\mynocitea{Plato}{12}.
\begin{lemma}
Let \infseqind{c} be a sequence of real numbers satisfying 
$ c_\myl > 0 $ for $ \myl = 1, \cdott 2, \ldots, $ and 
$ \sum_{\myl=1}^{\infty} c_\myl = \tfrac{1}{2} $.
Then the \powser
$ q\klasm{\myxi} = \tfrac{1}{2} - \sum_{\myl=1}^{\infty} c_\myl \myxi^\myl $ satisfies
$ \modul{q\klasm{\myxi}} < 1 $ for each complex number $ \myxi $ with 
$  \modul{\myxi} \le 1 $.
\label{th:powsermod}
\end{lemma}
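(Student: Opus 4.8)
The plan is to bound $|q(\xi)|$ on the closed unit disc by the triangle inequality and then to show that equality is never attained. For any $\xi$ with $|\xi|\le 1$ one has at once
\[
|q(\xi)| \le \tfrac12 + \sum_{\ell=1}^{\infty} c_\ell\,|\xi|^\ell \le \tfrac12 + \sum_{\ell=1}^{\infty} c_\ell = 1,
\]
using only $|\xi|^\ell\le 1$, $c_\ell>0$, and the hypothesis $\sum_{\ell\ge1} c_\ell=\tfrac12$. Hence the whole task reduces to ruling out $|q(\xi)|=1$, and I would do this by splitting the closed disc into the open disc $|\xi|<1$ and the unit circle $|\xi|=1$.

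First, if $|\xi|<1$ then $|\xi|^\ell<1$ for every $\ell\ge 1$ (also trivially when $\xi=0$), so $\sum_{\ell\ge1} c_\ell\bigl(1-|\xi|^\ell\bigr)>0$ because every term is positive; therefore $\sum_{\ell\ge1} c_\ell|\xi|^\ell<\tfrac12$, and the displayed chain gives $|q(\xi)|<1$ strictly. Second, on the circle $|\xi|=1$ the crude estimate only yields $|q(\xi)|\le 1$, so there I would invoke the equality case of the triangle inequality for the absolutely convergent series $q(\xi)=\tfrac12+\sum_{\ell\ge1}\bigl(-c_\ell\xi^\ell\bigr)$: the equality $|q(\xi)|=\tfrac12+\sum_{\ell\ge1}c_\ell=1$ forces all non-zero summands to be positive real multiples of one common unit complex number, and since the summand $\tfrac12$ is already a positive real, that number is $1$; thus $-c_\ell\xi^\ell\ge 0$ for every $\ell$, which together with $c_\ell>0$ and $|\xi^\ell|=1$ means $\xi^\ell=-1$ for all $\ell\ge 1$. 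Taking $\ell=1$ and $\ell=2$ gives $\xi=-1$ and simultaneously $\xi^2=1\neq -1$, a contradiction. Hence $|q(\xi)|<1$ on the circle as well, and assembling the two cases yields $|q(\xi)|<1$ for all $|\xi|\le 1$.

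The one point I expect to require a little care — though it is mild — is the use of the equality clause of the triangle inequality for an infinite sum, and being explicit that it is the strict positivity of \emph{every} $c_\ell$ (not merely absolute summability) that is exploited: if some coefficients were allowed to vanish, a root of unity with $\xi^\ell=-1$ on the surviving indices could persist, and indeed for $c_1=\tfrac12$ with all other $c_\ell=0$ one gets $|q(-1)|=1$, so the hypothesis cannot be weakened. Comparing the indices $\ell=1$ and $\ell=2$, both of which lie in the support precisely because every $c_\ell>0$, is exactly what closes this gap and finishes the proof.
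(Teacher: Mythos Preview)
Your proof is correct. The triangle-inequality bound together with the analysis of the equality case on the unit circle is a clean, self-contained argument, and your observation that the strict positivity of \emph{every} $c_\ell$ is essential (with the counterexample $c_1=\tfrac12$, $c_\ell=0$ for $\ell\ge 2$) is a nice touch.

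As for a comparison with the paper: the paper does not actually prove this lemma. It merely records that the result is closely related to work of Erd\H{o}s, Feller and Pollard and refers to another paper of the author for a detailed proof. So your argument is not competing with anything in the text; it simply supplies what the paper outsources. The Erd\H{o}s--Feller--Pollard connection is to renewal theory (the $c_\ell$ play the role of a defective probability distribution), but your direct approach via the equality case of the triangle inequality is more elementary and perfectly adequate here.
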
 
We are now in a position to present a proof of the lower bound \refeq{omegan-lowerbound}.
In fact, from Lemma \ref{th:omegan-quo} it follows that the coefficients
of the \powser 
$ p\klasm{\myxi} = 2 \Gamma(\alpone)\myomega\klasm{\myxi} $
with $ \omega(\xi) $ as in \refeq{omega-rep}
satisfy the conditions of Lemma \ref{th:kaluza}, and in addition $ p_0 = 2 $ holds.
This implies that the coefficients of the \powser 
\begin{align*}
\frac{1}{2\Gamma(\alpone)\myomega\klasm{\myxi}} 
=
c_0 \minus \mysum{\n=1}{\infty} c_\myl \myxi^\n
\end{align*}
satisfy
$ c_\n > 0 $ for $ \n = 0, 1, \ldots $ and
$ \sum_{\n=1}^{\infty} c_\n = c_0 = 1/2 $.
Lemma \ref{th:powsermod}
then implies that 
$ 2 \Gamma(\alpone) \modul{\myomega\klasm{\myxi}} \ge 1 $
and thus 
$ \modul{\myomega\klasm\xi} \ge \tfrac{1}{2 \Gamma(\alpone)} $
for $ \myxi \in \koza, \modul{\myxi} < 1 $. 
This is the desired estimate \refeq{omegan-lowerbound} needed in the proof of 
Lemma \ref{th:rninv-estimate}.
\section{Appendix B: Proof of Theorem \ref{th:main-midpoint}}
\label{appendix_b}
1.~We apply the representations \refeq{midpoint-rule} and
\refeq{midpoint-rule-error-def} with
$ \myfun=\myfun_n $, where
\begin{align*}
\myfun_n(y) =  k\klasm{\xn,y} u\klasm{y}, \quad 0 \le y \le \xn,
\end{align*}
and scheme \refeq{midpoint-noise-rule} imply the following,
\begin{align}
\halp \mysum{j=1}{n} \myomega_{n-j} \cdott k\klasm{\xn,\xn[j-1/2]} \cdott
\enndelta[j-1/2] 
= \enn{\myfun_n}{\xn}
+ \fndelta - f(\xn)
\for n = \myseqq{1}{2}{\nmax},
\label{eq:main-midpoint-a}
\end{align}
where
\begin{align*}
\enndelta & = \undelta[j-1/2] - u\klasm{x_{j-1/2}}, \quad j=\myseqq{1}{2}{\nmax}.
\end{align*}
2.~We next consider 
a \matvecform of \refeq{main-midpoint-a}. 
As a preparation we consider the matrix $ \Ah \in \myrnn[\nmax] $ is given by
\feldstretch{2.0}
\begin{align*}
\Ah \eq 
\left(
\begin{array}{@{\quadti}c@{\quadti}c@{\quadti}c@{\quadti}c@{\quadti}c@{\quadti}}
\myomega_0 \myk{1}{1/2} & 0 & \cdots & \cdots & 0 \\
\myomega_1 \myk{2}{1/2} & \myomega_0 \myk{2}{3/2} & 
\ddots & & 0 \\
\vdots & \myomega_1 \cdott \myk{3}{3/2} & \ddots  & \ddots &  \vdots \\
\vdots & & \ddots  & \ddots & 0 \\
\myomega_{\nmax-1} \myk{\nmax}{1} & \cdots & \cdots & 
\myomega_1 \myk{\nmax}{\nmax-3/2} &
\myomega_0 \myk{\nmax}{\nmax-1/2}
\end{array} \right)
\end{align*}
with the notation
\begin{eqnarray*}
\myk{n}{j-1/2} \eq k\klasm{\xn,\xn[j-1/2]}
\for 1 \le j \le n \le \nmax.
\end{eqnarray*}
Additionally we consider the vectors 
\begin{align}
\Ehdelta &= \kla{\enndelta[j-1/2]}_{j=1,2,\ldots,\nmax},
\quad
\rh = \kla{\enn{\myfun_n}{\xn}}_{n=1,2,\ldots,\nmax},
\quad
\Fh = 
\kla{\fndelta- f(\xn[n])}_{n=1,2,\ldots,\nmax}.  
\label{eq:main-midpoint-b-1}
\end{align}
Using these notations, the linear system of equations \refeq{main-midpoint-a}
can be written as 
\begin{align}
\halp \Ah \Ehdelta  = \rh \plus \Fh, \with \maxnorm{\Fh} \le \delta,
\label{eq:main-midpoint-b-2}
\end{align}
where $ \maxnorm{\cdot} $ denotes the maximum norm on $ \reza^\N $.
In addition, occasionally we consider a modified error equation which can easily be derived from \refeq{main-midpoint-b-2} by applying the matrix $ \Dh $ to
both sides of that equation:
\begin{align}
\halp \Dh \Ah \Ehdelta  = \Dh \rh +  \Dh \Fh.
\label{eq:main-midpoint-b-3}
\end{align}
This technique is a discrete analogue of fractional differentiation.

3.~For a further treatment of the identity \refeq{main-midpoint-b-2} and its variant
\refeq{main-midpoint-b-3},
we
now show
\begin{align}
\maxnorm{\Dh } = \Landausm{1},
\qquad
\maxnorm{\klasm{\Dh \Ah}^{-1}} 
= \Landauno{1},
\qquad
\maxnorm{\Ah^{-1} } = \Landausm{1} \as h \to 0,
\label{eq:main-midpoint-c}
\end{align}
where the matrix $ \Dh \in \myrnn[\nmax] $ given by
\feldstretch{2.1}
\begin{align}
\Dh = 
\left(
\begin{array}{c@{\quadsm}c@{\quadsm}c@{\quadsm}c@{\quadsm}c}
\aninv[0] & 0 & \cdots & \cdots & 0 \\
\aninv[1] & \aninv[0] & 0 & & 0 \\ 
\aninv[2] & \ddots & \ddots & \ddots & \vdots \\ 
\vdots & \ddots & \ddots & \ddots & 0 \\
\aninv[\nmax-1] & \cdots & \cdots  &
\aninv[1] & \aninv[0] 
\end{array} \right) \cdott ,
\label{eq:Dh-def}
\end{align}
and $ \maxnorm{\cdot} $ denotes the matrix norm induced by the maximum vector norm on $ \reza^\N $. 
In fact, the estimate $ \maxnorm{\Dh } = \Landausm{1} $ as $ h \to 0 $
follows immediately from the decay of the coefficients of the \inverse of the \genfunc $ \myomega $,
\cf estimate \refeq{omeganinv-decay}. 
For the proof of the second statement in \refeq{main-midpoint-c} we use the fact that 
the matrix $ \Dh \Ah  $ can be written in the form
$ \Dh \Ah = I_h  \plus \cdott K_h $, where 
$ I_h \in \myrnn[\N] $ denotes the identity matrix,  
and $ K_h = \kla{k_{h,\n,j}} \in \myrnn[\N]  $
denotes some lower triangular matrix which satisfies
$ \max_{1 \le j \le \n \le \nmax} {\modul{k_{h,\n,j}}} = \Landausm{h} $ as $ h \to 0 $,
\cf the proof of Lemma 4.2 in \mycitea{Eggermont}{81} for more details. We only
note that here it is taken into account that the kernel function is uniformly
Lipschitz continuous with respect to the first variable, cf.~part 
\ref{item:assump-k-smooth} of Assumption \ref{th:midpoint-assump}.
This representation of $\Dh \Ah $ and the discrete version of Gronwall's inequality 
now yields $ \maxnorm{\klasm{\Dh \Ah}^{-1}} = \Landauno{1} $ as $ h \to 0 $.
The third estimate in \refeq{main-midpoint-c} follows immediately from the other two 
estimates considered in
\refeq{main-midpoint-c}.

4.~In view of \refeq{main-midpoint-b-2}--\refeq{main-midpoint-c}, it remains to
take a closer look at the representations of the quadrature error 
considered in Lemma \ref{th:midpoint-error}.
We consider different situations for $ \gamma $
and constantly make use of the fact that,  for some finite constant $ L \ge 0 $, 
we have 
\begin{align}
\myfunn \in \HL{\gamma}{\xn} \for \n = 1,2, \ldots, \N, 
\label{eq:myfunn_hoelder}
\end{align}
\cf
Assumption \ref{th:midpoint-assump}. 
\begin{myenumerate_roman}
\item In the case $ \gamma \le 1 $ we proceed in two different ways. The first one turns out to be useful for the case $ \alp \le \tfrac{1}{2} $, while the other one uses partial summation and is useful for the case $ \alp \ge \tfrac{1}{2} $.
\begin{mylist}
\item
Our first approach proceeds with \refeq{main-midpoint-b-2}, and we assume $ \alp < \gamma \le 1 $ in this case. We then 
easily obtain, cf.~\refeq{midpoint_error_0}, \refeq{myfunn_hoelder},
\begin{align*}
\maxnorm{\rh} = \max_{1 \le \n \le \nmax} \modul{\enn{\myfunn}{\xn}} = \Landauno{\mydeltax^{\gamma}} \as h \to 0,
\end{align*}
and then, cf.~\refeq{main-midpoint-b-2} and \refeq{main-midpoint-c},
$ \maxnorm{\Ehdelta}
 = \Landauno{\mydeltax^{\malp}(\mydeltax^{\gamma} + \delta)}
= \Landauno{\mydeltax^{\gamma-\alpha} + \tfrac{\delta}{\mydeltax^\alp}} $.

\item
In our second approach we would like to proceed with \refeq{main-midpoint-b-3}, and  
we need to consider the vector $ \Dh\rh \in \reza^\N $ in more detail. 
For this purpose we assume that $ 1-\alp < \gamma \le 1 $ holds, and
we introduce the notation
\begin{align*} 
\rnh = \enn{\myfun_n}{\xn}, \quad n=\myseqq{1}{2}{\nmax}.
\end{align*}
Partial summation, applied to the \nth entry of
$ \Dh\rh $, gives
\begin{align} 
(\Dh\rh)_n = \mysum{\jod=1}{\n} \aninv[\n-\jod] \rnh[\jod]
=
\mybeta_n \rnh[1] 
+
\mysum{\ell=1}{\n-1} \mybeta_{n-\ell}
\kla{\rnh[\ell+1]- \rnh[\ell]},
\label{eq:partial-sum} 
\end{align} 
where
\begin{align}
\mybeta_\n = \mysumtxt{\ell=0}{\n-1} \aninv[\ell] \for n = 1, 2, \ldots \ .
\label{eq:betan-def} 
\end{align} 
From Lemma \ref{th:omeganinv-props} it easily follows that 
$ \mybeta_\n \ge 0 $ for $ n = 1, 2, \ldots $\ . In addition, we have
\begin{align}
\mybeta_\n = \Landauno{\n^{\malp}} \as n \to \infty,
\label{eq:betan-decay} 
\end{align} 
and thus
\begin{align}
\mysum{\ell=1}{\n-1} \mybeta_{\ell}
= \Landauno{\N^{1-\alp}} 
= \Landauno{h^{\alp-1}} \as h \to 0 
\label{eq:betan-sum} 
\end{align} 
uniformly for $ n = 1,2,\ldots, N $.
H\"older continuity \refeq{myfunn_hoelder} implies
\begin{align*} 
\modul{\rnh[1]} = \modul{\enn{\myfunn[1]}{\xn[1]}}
\le 
\mfrac{ L h^\gamma}{\Gamma(\alp)} \ints{0}{h} { (h-y)^{\alp-1} }{ dy }
= \Landauno{h^{\gamma+\alp}},
\end{align*} 
and we next consider the differences $ \rnh[\ell+1]- \rnh[\ell] $ in more detail.
For this purpose we introduce short notation for the interpolation error,
\begin{align*}
\mydelta_\n(y) = \myfunn(y) - \ph \myfunn (y) \for 0 \le \vary \le \xn,
\quad n = 1,2,\ldots, \N.
\end{align*}
We then have
\begin{align*} 
\rnh[\ell+1] - \rnh[\ell] 
& =
\tfrac{1}{\Gamma(\alp)} \klala{ \ints{0}{\xn[\ell+1]} 
{ (\xn[\ell+1]-\vary)^{\alp-1} \mydelta_{\ell+1}(y) }{ dy }
- \ints{0}{\xn[\ell]} 
{ (\xn[\ell]-\vary)^{\alp-1} \mydelta_\ell(y) }{ dy }
}
\\
&= 
\tfrac{1}{\Gamma(\alp)} \ints{\xn[\ell]}{\xn[\ell+1]} 
{ (\xn[\ell+1]-\vary)^{\alp-1} \mydelta_{\ell+1}(y) }{ dy }
+
\tfrac{1}{\Gamma(\alp)} \ints{0}{\xn[\ell]} 
{ (\xn[\ell+1]-\vary)^{\alp-1} \kla{\mydelta_{\ell+1}-\mydelta_{\ell}}(y) }{ dy }
\\
& \qquad +
\tfrac{1}{\Gamma(\alp)} \ints{0}{\xn[\ell]} 
{ \kla{(\xn[\ell+1]-\vary)^{\alp-1} -  (\xn[\ell]-\vary)^{\alp-1}} \mydelta_{\ell}(y) }{ dy }
=: s_1 + s_2 + s_3.
\end{align*} 
We have $ s_1 = \Landauno{h^{\gamma + \alp }} $ which easily follows from
$ \sup_{0 \le \vary \le \xn[\ell+1]} \modul{\mydelta_{\ell+1}(\vary)} = \Landauno{h^\gamma} $. Moreover, a first order Taylor expansions of the kernel $ k $ with respect to the first variable at the grid point $ \xjb \ (1 \le j \le \ell+1 ) $ gives,
for $ \xjm \le \vary \le  \xj $,
\begin{align*} 
\kla{\mydelta_{\ell+1}-\mydelta_{\ell}}(y)
&=
k(\xn[\ell+1],\vary) u(y) - k(\xn[\ell+1],\xjb) u(\xjb)
\\ 
& \qquad  - \big\{
k(\xn[\ell],\vary) u(y) - k(\xn[\ell],\xjb) u(\xjb) \big\}
\\
&=
\big(\mfrac{\partial k}{\partial x}(\xn[\ell],\vary) h + \Landauno{h^2}\big) u(y)
- \big(\mfrac{\partial k}{\partial x}(\xn[\ell],\xjb) h + \Landauno{h^2}\big) u(\xjb)
\\
&=
h \big(\mfrac{\partial k}{\partial x}(\xn[\ell],\vary) u(y) -
\mfrac{\partial k}{\partial x}(\xn[\ell],\xjb) u(\xjb) \big)
+ \Landauno{h^2}
=
\Landauno{h^{\gamma + 1}},
\end{align*} 
and this implies
$ s_2 = \Landauno{h^{\gamma + 1}} $.
Finally, 
\begin{align*} 
\modul{s_3} 
&\le
\mfrac{L}{\Gamma(\alpha)} h^\gamma \ints{0}{\xn[\ell]} 
{ (\xn[\ell]-\vary)^{\alp-1} -  (\xn[\ell+1]-\vary)^{\alp-1}}{ dy }
=
\mfrac{L}{\Gamma(\alpone)} h^{\gamma + \alp}  \kla{ 1 + \ell^\alp- (\ell + 1)^\alp}
= \Landauno{h^{\gamalp}}.
\end{align*} 
\marginpar{Check}
Summation gives
$ s_1 + s_2 + s_3 = \Landauno{h^{\gamma + \alp}} $,
and \refeq{partial-sum} finally results in (see also \refeq{betan-sum})
\begin{align*} 
(\Dh\rh)_n 
&=
\Landauno{h^{\gamma + \alp} + h^{\alp-1} h^{\gamma + \alp}}
=
\Landauno{h^{\gamma + 2\alp -1}}
\end{align*} 
uniformly for $ n = \myseqq{1}{2}{\nmax} $. We note that this estimate is useful for
$ \alpha \ge \tfrac{1}{2} $ only. 
We are now in a position to proceed with \refeq{main-midpoint-b-3}:
\begin{align*}
\maxnorm{\Ehdelta}  =
\Landaubi{\hmalp \maxnorm{\Dh\rh} + \mfrac{\delta}{\halp}}
= \Landauno{h^{\gamma + \alp -1} + \mfrac{\delta}{\halp}}
\as (h,\delta) \to 0,
\end{align*}
where also \refeq{main-midpoint-c} has been used. This gives the desired result.
\end{mylist}

\item
\label{it:main-proof-ii} 
We now proceed 
with the case $ 1 < \gamma \le 2 $. Preparatory results are given in the present item \ref{it:main-proof-ii}, and in item \ref{it:main-proof-iii} the final steps will be done.
Representation~\refeq{midpoint_error_1} of the integration error gives
\begin{align*}
\enn{\myfunn}{\xn}
&=
\halpone \rhna + \rhnb, \with
\rhna = \mysum{\jod=1}{\n} \tau_{\n-\jod} \prim{\myfunn}\kla{\xjb},
\quad
\rhnb = \Ialpkla{\myfunn- \qh \myfunn}{\xn},
\end{align*}
for $ n=1,2,\ldots, \N $,
or, in vector notation
(for the definition of $ \rh $ see \refeq{main-midpoint-b-1})
\begin{align}
\rh
& = 
\halpone \rha + \rhb,
\with \rha = (\rhna)_{n=1,2,\ldots, \N},
\quad
\rhb = (\rhnb)_{n=1,2,\ldots, \N}.
\label{eq:main-midpoint-g}
\end{align}
In view of
\refeq{main-midpoint-b-2} and \refeq{main-midpoint-b-3},
we need to consider the four vectors 
$ \rha, \Dh\rha, \rhb $ and $ \Dh\rhb \in \reza^\N $ in more detail.
\begin{mylist}
\item  
From the summability of the coefficients $ \taun $, \cf \refeq{taul_asymp}, it immediately follows that 
$ \maxnorm{\rha}  = \Landauno{1} $ as $ h \to 0 $.

\item
In the case $ \gamma > 2 - \alpha $ and $ u(0) = \prim{u}(0) = 0 $,
it turns out to be useful to consider the vector $ \Dh\rha $. Partial summation related to the \nth entry of
$ \Dh\rha $ gives
\begin{align} 
(\Dh\rha)_n \eq \mysum{\ell=1}{\n} \aninv[\n-\ell] \rhna[\ell]
\eq
\mybeta_n \rhna[1]
+
\mysum{\ell=1}{\n-1} \mybeta_{n-\ell}
\kla{\rhna[\ell+1] - \rhna[\ell]},
\label{eq:partialsum-2}
\end{align} 
with $ \mybeta_\n $ given by \refeq{betan-def}.
The smoothness property \refeq{myfunn_hoelder}, 
the assumption $ u(0) = \prim{u}(0) = 0 $
and the boundedness
$ \mybeta_n = \Landauno{1} $, \cf \refeq{betan-decay},
imply that 
$ \mybeta_n \rhna[1] = 
\mybeta_n \tau_0  \prim{\myfun}_1\kla{\varx_{1/2}} = \Landauno{h^{\gamma-1}} $.
In addition,
\begin{align*} 
\rhna[\ell+1] - \rhna[\ell]
& =
\mysum{\jod=1}{\ell+1} \tau_{\ell+1-\jod} \prim{\myfunn[\ell+1]}\kla{\xjb}
-
\mysum{\jod=1}{\ell} \tau_{\ell-\jod} \prim{\myfunn[\ell]}\kla{\xjb} 
\\[-1mm]
& =
\tau_{\ell} \prim{\myfunn[\ell+1]}\kla{x_{1/2}}
+ \mysum{\jod=1}{\ell} \tau_{\ell-\jod} 
\klabi{\prim{\myfunn[\ell+1]}\kla{\xjf} - \prim{\myfunn[\ell]}\kla{\xjb} }
= \Landauno{h^{\gamma-1}}
\end{align*}
uniformly for $ \ell = \myseqq{1}{2}{\N-1} $.   
The considered partial summation \refeq{partialsum-2}
thus finally results in (see also \refeq{betan-sum})
\begin{align} 
\maxnorm{\Dh\rha} = 
\Landauno{h^{\gamma - 1}} + \Landauno{h^{\alp-1+\gamma - 1}}
= \Landauno{h^{\gamalp -2}}.
\label{eq:partialsum-2a}
\end{align} 
\item
It follows from \refeq{interpol-error-2} that
$ \maxnorm{\rhb}  = \Landauno{h^\gamma} $ as $ h \to 0 $.
This estimate will be useful in the case $ \alp \le \tfrac{1}{2} $.

\item
We next consider the vector 
$ \Dh \rhb $ in more detail.
Partial summation applied to the \nth entry of
$ \Dh\rhb $ gives
\begin{align} 
(\Dh\rhb)_n \eq \mysum{\ell=1}{\n} \aninv[\n-\ell] \rhnb[\ell]
=
\mybeta_n \rhnb[1]
+
\mysum{\ell=1}{\n-1} \mybeta_{n-\ell}
\kla{\rhnb[\ell+1] - \rhnb[\ell]}.
\label{eq:partialsum-3}
\end{align} 
We have
\begin{align*} 
\rhnb[1] = \Landauno{h^{\gamma+\alp}},
\qquad 
\rhnb[\ell+1] - \rhnb[\ell]
= \Landauno{h^{\gamma+\alp}},
\end{align*} 
uniformly for $ \ell = \myseqq{1}{2}{\N-1} $. This in fact is verified similarly as in the second item of part 4(i) of this proof, this time with second order Taylor expansions of the kernel $ k $ as well as first order Taylor expansions of $ \tfrac{\partial k}{\partial y} $ with respect to the first variable, respectively.
We omit the simple but tedious computations.
This gives
\begin{align} 
\maxnorm{\Dh\rhb} = 
\Landauno{h^{\gamma + \alp}} + \Landauno{h^{\alp-1+\gamma + \alp}}
= \Landauno{h^{\gamma + 2 \alp-1}}.
\label{eq:partialsum-3a}
\end{align} 
This estimate will be useful in the case $ \alp \ge \tfrac{1}{2} $.
\end{mylist}
It should be noticed that the second of the four considered items is the only one where the initial condition $ u(0) = \prim{u}(0) = 0 $ is needed. 

\item
\label{it:main-proof-iii} 
We continue with the consideration of the case $ 1 < \gamma \le 2 $. The results from
\ref{it:main-proof-ii} 
allow us to proceed with 
\refeq{main-midpoint-b-2}, \refeq{main-midpoint-b-3}.
\begin{mylist}
\item
We first consider the case $ \alp \le \tfrac{1}{2}, \cdott  1 < \gamma \le \alpone $. 
The consistency error representations in item (ii) of the present proof
yield 
$ \maxnorm{\rh} = \max_{1 \le \n \le \nmax} \modul{\enn{\myfunn}{\xn}} 
= \Landauno{h^{\alpone} \maxnorm{\rha} + \maxnorm{\rhb}}
= \Landauno{h^{\alpone} + h^{\gamma}} = \Landauno{h^{\gamma}} $.
From the error equation \refeq{main-midpoint-b-2} it then follows
$ \maxnorm{\Ehdelta}
= \Landauno{\mydeltax^{\malp}(\mydeltax^{\gamma} + \delta)}
= \Landauno{\mydeltax^{\gamma-\alpha} + \delta/\mydeltax^\alp} $.

\item
We next consider the case $ \alp \ge \tfrac{1}{2}, \cdott  1 < \gamma \le 2-\alp $. 
The integration error estimates obtained in item (ii)
yield 
$ \maxnorm{\Dh \rh}
= \Landauno{h^{\alpone} \maxnorm{\rha} + \maxnorm{\Dh \rhb}}
= \Landauno{h^{\alpone} + h^{\gamma+ 2\alp-1}} = \Landauno{h^{\gamma+ 2\alp-1}} $,
where the first identity in \refeq{main-midpoint-c} has been applied.
From the error equation \refeq{main-midpoint-b-3} it then follows
$ \maxnorm{\Ehdelta}
= \Landauno{\mydeltax^{\malp}(\mydeltax^{\gamma+ 2\alp-1} + \delta)}
= \Landauno{\mydeltax^{\gamma-1+\alp} + \delta/\mydeltax^\alp} $.

\item
Finally we consider the case $ 2-\alp < \gamma \le 2 $ and $ u(0) = \prim{u}(0) = 0 $.
The consistency error estimates in item (ii)
yield 
$ \maxnorm{\Dh \rh}  
= \Landauno{h^{\alpone} \maxnorm{\Dh \rha} + \maxnorm{\Dh \rhb}}
= \Landauno{h^{\gamma+ 2\alp-1}} $.
From the error equation~\refeq{main-midpoint-b-3} it then follows
$ \maxnorm{\Ehdelta}
= \Landauno{\mydeltax^{\malp}(\mydeltax^{\gamma+ 2\alp-1} + \delta)}
= \Landauno{\mydeltax^{\gamma-1+\alp} + \delta/\mydeltax^\alp} $.
\end{mylist}
This completes the proof of the theorem.
\end{myenumerate_roman}
\section{Conclusions}
In the present paper we have considered
the \repmidrule for the regularization of
weakly singular Volterra integral
equations of the first kind
with perturbed given \rhss. 
The applied techniques are closely related to those used 
in \mycitea{Eggermont}{79}.
The presented results include intermediate smoothness degrees of the solution of the integral equation in terms of H\"older continuity.
In addition we have given a new proof of the stability estimate for the \inverse of the
generating sequence, \cf\refeq{omeganinv-decay}, which may be of independent interest.
Another topic is the use of correction starting weights to get rid of initial conditions on the solution. Results of some numerical experiments are also given. 
\bibliography{../datenbanken/standard,../datenbanken/volterra,../datenbanken/numa,../datenbanken/illposed,../datenbanken/fuan}
\end{document}